\DeclareFontFamily{U}{rsfs}{\skewchar\font127 }
\DeclareFontShape{U}{rsfs}{m}{n}{%
   <-6> rsfs5
   <6-8> rsfs7
   <8-> rsfs10
}{}
\newcommand*{\be}[1]{\begin{equation}\label{#1}}
\newcommand*{\ee}{\end{equation}}
\newtheorem{theorem}{Theorem}[section]
\newtheorem{lemma}{Lemma}[section]
\newtheorem{proposition}{Proposition}[section]
\newtheorem{definition}{Definition}[section]
\theoremstyle{remark}
\newtheorem{example}{Example}[section]
\newtheorem{remark}{Remark}[section]
\definecolor{pink}{RGB}{255,45,115}
\DeclareMathOperator{\grad}{grad}
\DeclareMathOperator{\hess}{hess}
\DeclareMathOperator{\curl}{curl}
\DeclareMathOperator{\inc}{inc}
\DeclareMathOperator{\dev}{dev}
\DeclareMathOperator{\sym}{sym}
\DeclareMathOperator{\diverenge}{div}
\DeclareMathOperator{\im}{im}
\DeclareMathOperator{\tr}{tr}
\DeclareMathOperator{\st}{star}
\renewcommand{\div}{\diverenge}
\renewcommand{\emptyset}{\varnothing}
\theoremstyle{remark}
\numberwithin{equation}{section}
\DeclareMathOperator{\rot}{rot}
\title[Finite element complexes with trace structures]{Finite Element Complexes with Traces Structures: A unified framework for cohomology and bounded interpolation}
\author{Jun Hu, Yizhou Liang, Ting Lin }
\begin{document}
\maketitle 
\begin{abstract}
This paper considers the cohomology and bounded interpolation of nonstandard finite element complexes, e.g. Stokes, Hessian, Elasticity, divdiv. Compared to the standard finite element exterior calculus, the main challenge is the existence of extra smoothness. This paper provides a unified framework for finite element complexes with extra smoothness. The trace structure is introduced to derive the bubble complexes in different dimensions (vertices, edges, faces).
It is shown that if the bubble complexes in different dimensions are all exact, then the finite element has the correct cohomology. Moreover, the $L^2$ bounded interpolation can be constructed.
\end{abstract}

\section{Introduction}
Constructing finite element spaces and finite element pairs with certain continuity and differential conditions is a central topic of finite element methods. In past decades, a common way to encode this condition is through the differential complexes. For solving PDEs and simulating physical systems, preserving the de~Rham complex (and its cohomology) provides stability, convergence, and structure-preserving properties. This viewpoint has become central in the area of Finite Element Exterior Calculus (FEEC) \cite{arnold2006finite,arnold2010finite,arnold2018finite,hiptmair1999canonical}. The huge success of the Finite element exterior calculus (FEEC) has been witnessed in both mathematics and engineering, see \cite{arnold2018finite} and the references therein.

Recently, more and more discretizations are emerged and developed rapidly, especially in three dimensions. For example, non-standard discretizations of the Stokes complex in three dimensions \cite{neilan2015discrete,hu2022stokes}. Moreover, many other complexes arising in various applications and relating to different differential equations, e.g., hessian complex \cite{hu2021conforming}, elasticity complex \cite{christiansen2018nodal,chen2022finiteelasticity,hu2024finite}, divdiv complex \cite{chen2022finitedivdiv,hu2022conforming,hu2024family,hu2024finite}. Some complexes with the numerical relativity and the Cotton-York operators are also considered \cite{hu2023finiteelementssymmetrictraceles}. Some generalizations to higher regularity in two and three dimensions are considered in \cite{chen2022complexes,chen2022finite2D}.
Unfortunately, in the present stage, no theory can provide a good guide for conforming discretization.   Based on Berstein-Gelfand-Gelfand discretizations, some attempts have been made, see \cite{hu2025distributional,hu2025finite,christiansen2024discrete}. However, most of discussions around the discrete complexes concentrate on the assumption that the domain $\Omega$ is topologically trivial, while a theory dealing with general topology is missed in the literature. An important question is:  \emph{are these discrete complexes preserving topology on nontrivial domains}. For results on finite element de Rham cohomology without boundary conditions, see \cite{arnold2006finite}, and for those with homogeneous boundary conditions, see \cite{christiansen2008smoothed}. For results concerning the discrete de Rham complex with partial boundary conditions on domains with nontrivial topology, see \cite{licht2017complexes}. The finite element systems (FES) \cite{christiansen2011topics} gives a systematic view relating discrete de Rham complexes to a discrete sheaf theory, yielding the cohomological results. Later, Christiansen and Hu generalized this idea to elasticity complexes in two dimensions \cite{christiansen2023finite}. There are two significant properties in the theory of FES: (1) FES requires lots of algebraic structures that fit perfectly at any dimension. (2) FES can only handle the finite elements that has no extra smoothness. As a consequence, the use of FES with non-standard finite element complexes can involve nontrivial difficulties.

% {\color{red}[Yz: we should present more references about cohomology]}
This paper proposes a unified framework, called Finite Element Complex with Trace Structures (FECTS), which is established under more relaxed conditions.  Specifically, we are interested in two types of applications. 

\emph{Cohomology}: The cohomology of the finite-element complex can be computed using the FECTS. 
% It can be demonstrated that the skeletal parts are responsible for the non-trivial cohomology, while the bubble parts remain exact. As a result, the cohomology of the finite element complex is isomorphic to its smooth counterparts.
These results signify that the some finite element complexes (those can be fit into this framework) preserve their structure even in the presence of non-trivial topologies, such as those with holes in various dimensions. Moreover, due to the correspondence between harmonic forms and geometric concepts (vertices, edges, and faces), the harmonic form can be readily calculated through simplicial (co)homology.
This calculation lays the groundwork for further exploration of topics within the finite element methods, including the development of fast solvers and adaptive techniques. \cite{demlow2014posteriori}. 
    
 \emph{Locally bounded interpolation}: Recently, Arnold and Guzmán provided a local \(L^2\) bounded commuting projection from the Sobolev de Rham complexes to the FEEC complexes, based on the special structures. This projection is of great significance as it essentially encompasses the previous constructions, including the global constructions \cite{schoberl2005multilevel,christiansen2008smoothed} and \(H\Lambda^k\) bounded \cite{falk2014local,falk2015double}. They play an important role in the analysis of nonstandard finite element methods \cite{brezzi1974existence,brezzi1991mixed}, especially in the optimal error estimates of the mixed formulation of Hodge-Laplacian \cite{arnold2006finite} and the eigenvalue problem \cite{boffi2010finite}.   

In this paper, we give a unified framework obtaining the cohomology of the discrete complexes (mainly coming from finite elements). In this framework, we introduce two key tools, which resolves the extra smoothness constraints in FES while addressing the cohomology.

% \lt{write the cohomology here.}

% Given the various types of the discrete complexes, a natural question is 

% \begin{quote}
% \emph{What is the hierarchy structure for general discrete complexes}?
% \end{quote}

% The primary aim of this paper is to resolve this question through a new algebraic framework. Notably, this framework also offers valuable insights into the relevant constructions. As previously discussed, a fundamental aspect in the construction of the FEEC element is the hierarchy, i.e., the commutation property between the differential operator \(d\) and the trace operator \(\tr\). However, when dealing with non-standard discretizations, the notion of $\tr$ faces a substantial challenge. The standard trace is incapable of representing the construction for non-FEEC elements. For instance, in the three-dimensional construction of \(C^1\) finite elements, it is essential to impose more stringent continuity conditions on edges and vertices, as reported by [HLWY]. Similar phenomena are observed during the construction of tensor elements. For example, when constructing an \(H(\div)\) - conforming symmetric matrix-valued finite element space, vertex continuity must be enforced, as detailed in \cite{hu2015family}. Such "additional continuity" requirements cannot be directly incorporated without redefining the concept of the trace.

% To overcome this, 

\subsection*{Trace structures} (\Cref{sec:ts}) 
We introduce the \emph{trace structure} as a reformulation of the finite elements, which integrates these additional continuity conditions when considering traces. For example, in two dimensions trace structures can be designed to enforce specific data: \(C^1\) data (values and first-order derivatives) on each edge, and \(C^2\) data (including values, first-order derivatives, and second-order derivatives) at each vertex. This modification, unfortunately, leads to the sacrifice of the traditional trace composition rule, such that \(\tr\circ\tr\neq\tr\) any longer. The underlying principle is that when focusing on points located on the edges, normal derivatives cannot be ``seen''. Consequently, pure normal second-order derivatives cannot be captured through the composition. Instead, we stipulate that \(\tr\circ\tr\subset\tr\). In essence, when examining the composition, only partial trace information can be recorded. Based on the newly defined trace structure, the concept of geometric decomposition and finite element construction can be seamlessly transferred, resulting the finite element spaces with additional regularity.

Based on the trace structures, we propose the \emph{finite element complexes with traces structures} (FECTS). The FECTS can be regarded as a diagram, where for each entry we assign a finite-dimensional space (usually polynomial in this paper). Each column contains a trace structure while each row forms a differential complex. The FECTS assumes some commuting properties between $d$ and $\tr$.  

% \begin{figure}
% \label{diag:fects}
% \begin{tikzcd}
%     A^0(\tau_n) \arrow[r, "d"] \arrow[d, "\tr"]  \ar[ddd, bend right=60, "\tr", swap] & \cdots\arrow[r, "d"] \arrow[d, "\tr"] \ar[ddd, bend right =30, "{\tr} "] & A^n(\tau_n) \arrow[d, "\tr"]   \ar[ddd, bend right =30, "{\tr} "] \\
%     A^0(\tau_{n-1}) \arrow[r, "d"] \arrow[d, "\tr"] & \cdots\arrow[d, "\tr"] \ar[r,"d"] & A^n(\tau_{n-1}) \ar[d,"\tr"]  \\
%     \cdots \ar[d,"\tr"] \ar[r,"d"] & \cdots \ar[d,"\tr"] \ar[r,"d"] & \cdots \ar[d,"\tr"] \\ 
%     A^0(\tau_0) \arrow[r, "d"] &  \cdots & A^n(\tau_0)
% \end{tikzcd}
% \caption{An illustration of the FECTS. Here $d$ represents differential operators, and $\tr$ denotes trace structures. The rigorous definition of FECTS will be presented in \Cref{sec:main-results}. We emphasize that the }
% \end{figure}

% \begin{figure}
% \begin{tikzcd}
%     B^0(\tau_n) \arrow[r, "d"] & \cdots\arrow[r, "d"] & B^n(\tau_n) \\
%     B^0(\tau_{n - 1}) \arrow[r, "d"] & \cdots\arrow[r, "d"] & B^n(\tau_{n - 1}) \\
%     \cdots \ar[r,"d"] & \cdots \ar[r,"d"] & \cdots \\ 
%     B^0(\tau_0) \arrow[r, "d"] &  \cdots  \arrow[r, "d"] & B^n(\tau_0)
% \end{tikzcd}  
% \caption{When we consider the kernels of the vertical mappings denoted by (\(\tr\)), we are able to derive the bubble complexes. For each dimension, the bubble function space aggregates all the data which, through traces, have no contribution to the lower-dimensional faces.}
% \end{figure}

\subsection*{Generalized currents} (\Cref{sec:gen-cur}) To obtain finer algebraic results, we introduce the concept of the \emph{currents}, which was introduced by de Rham. He constructed a set of $\mathbb{R}$-valued distributions $\delta_{\bullet}$. These distributions are associated with geometric concepts in a triangulation in a way that the Stokes formula holds: $\delta_{\sigma}(dw)=\delta_{\partial \sigma}(w)$.

% \begin{figure}
% \begin{tikzcd}
%     (C^{\infty} \otimes \mathbb X^0)^{\vee}  & (C^{\infty} \otimes \mathbb X^1)^{\vee} \ar[l,"d^{\vee}"]  &  \cdots \ar[l,"d^{\vee}"] &  (C^{\infty} \otimes \mathbb X^n)^{\vee} \ar[l,"d^{\vee}"]  \\
%     \mathcal Z \otimes C_{0} \ar[u,"\Upsilon"]&   \mathcal Z \otimes C_{0} \ar[l,"\partial"] \ar[u,"\Upsilon"] & \cdots \ar[l,"\partial"] & \mathcal Z\otimes C_n  \ar[u,"\Upsilon"] \ar[l,"\partial"].
% \end{tikzcd}
% \
% \end{figure}
For various smooth differential complexes (such as the Hessian, elasticity, and divdiv complexes), we define the \emph{generalized currents}. The generalized currents are a family of Hilbert space-valued distributions that satisfy the Stokes formula. For example, in the context of the Hessian complex, if $w$ is a suitable differential form and $\sigma$ is a simplex in the triangulation, the relationship $\delta_{\sigma}(dw)=\delta_{\partial \sigma}(w)$ holds for these generalized currents. This set of generalized currents has been used in constructing finite element-distribution complexes, as discussed in \cite{hu2025distributional,christiansen2023extended}, where the cohomological results for those complexes were first presented.

% The interaction between the generalized currents and the FECTS yields abundant information. Under certain conditions, the entire finite element complex can be decomposed into two components: the skeletal part and the bubble part. The skeletal part emulates the behaviour of the Whitney form. It serves as a link between the finite element functions and the geometry. In contrast, the bubble part, across different dimensions, plays a crucial role in approximation and unisolvence. It also reveals the hierarchical nature of the complex.
% It is worth noting that when constructing the finite element complex, similar structures, particularly the bubble complexes on codimensional 1 faces, are frequently discussed. These discussions support the verification process in this paper. However, for more extensive applications, we stress that one must consider the ``correct'' bubble complex. \emph{This consideration should not be limited to codimensional 1 faces but should encompass all levels of the simplices, even at vertices.}

% As a summary, we establish a hierarchical framework for general discrete differential complexes. The key is the interplay of FECTS and generalized currents. With this framework, more properties like cohomology, locally bounded interpolations, and mixed dimensional formulations can be discussed. We hope this framework can be also the guideline of constructing new finite element complexes. 

The paper is organized as follows. In \Cref{sec:main-results} we propose the main results about the FECTS. In \Cref{sec:2d,sec:3d}, we show the examples in two and three dimensions, respectively. The proofs are shown in \Cref{sec:proofs}.

\section{Main Results}
\label{sec:main-results}
In this section, we present the main results regarding the discrete complexes. Throughout this paper, these complexes are based on finite elements, though the result is not restricted to just this case. 

This section is organized as follows. First, in \Cref{sec:ts}, we introduce the trace structure. Finite element spaces and finite element complexes equipped with the trace structure are also presented in this section. In \Cref{sec:comp}, we show a basic recap on the complexes. Particularly, we introduce the cohomology of the de Rham complex. In \Cref{sec:gen-cur}, we introduce the definition of the generalized currents, we also give some examples of the generalized currents for some tensor-valued complexes in 2D and 3D. In \Cref{sec:main-result}, we give the definition of the finite element complexes with trace structure (FECTS). Then the cohomology and the commuting bounded projections can be derived.

We begin by introducing notations, conventions, and assumptions that are frequently used in this paper. 

Let $\mathcal T$ be a conforming triangulation of a domain $\Omega \subset \mathbb R^n$. Denote by $\mathcal T_s$  the collection of all $s$ faces (sub-simplices) of $\mathcal T$, and $\mathcal T_{\leq s}$  the collection of all faces of $\mathcal T$ with dimension not greater than $s$. Generally, we will use Greek letters $\sigma, \tau,\eta$ to denote a simplex of $\mathcal T$. Particularly, denote by $\mathcal{V},\mathcal{E},\mathcal{F}$ and $\mathcal{K}$ the set of all vertices, edges, faces, and tetrahedron, respectively.

For each $\sigma \in \mathcal T$, let $h_{\sigma}$  denote the diameter of $\sigma\in\mathcal{T}$, and denote $h :=\max_{\sigma\in\mathcal{T}_n}h_{\sigma}$ as the mesh size. 
We say $\tau \trianglelefteq \eta$ if $\tau$ is a subsimplex of $\eta$, and say $\tau \trianglelefteq_1 \eta$ if $\tau$ is a subsimplex of $\eta$ with codimension 1. For a simplex $\tau\in\mathcal{T}$, let $\st(\tau) := \cup\{\sigma \in \mathcal T_n : \tau \trianglelefteq \sigma\}$ and $\st^1(\tau) := \cup \{ \sigma \in \mathcal T_n : \overline{\tau} \cap \overline{\sigma} \neq \emptyset \} = \cup_{v \trianglelefteq \tau} \st(v)$ and $\st^2(\tau) = \st^1(\st^1(\tau))$. In this paper, we suppose that each patch $\st^1(\tau)$ for $\tau \in \mathcal T$ is simply connected, which is satisfied when $h$ is sufficiently small. Note that $\st(\tau)$ is always simply connected, by the fact that $\mathcal T$ is conforming. 

Throughout this paper, we denote the space of all $n\times n$ matrices by $\mathbb M^n$, all symmetric $n\times n$ matrices by $\mathbb S^n$, and all trace-free $n\times n$ matrices by $\mathbb T^n$. We use the standard notation $L^2(\omega)$ and $H^m(\omega)$ to denote the Sobolev space on domain $\omega$, and the $L^2$- inner product and the $L^2$-norm are defined by $(\cdot,\cdot)_{\omega}$ and $\|\cdot\|_{L^2(\omega)}$, respectively. Let $C^{\infty}(\omega)$ and $P_k(\omega)$ denote the smooth function space and the set of all polynomials of total degree not greater than $k$ on $\omega$, respectively. We also use $L^2(\omega)\otimes \mathbb X, C^{\infty}(\omega)\otimes \mathbb X$ and $P_k(\omega)\otimes \mathbb X$ to denote the tensor-valued spaces on $\omega$, which taking values in the finite dimensional space $\mathbb X$. Here $\mathbb X$ could be $\mathbb R^n,\mathbb M^n,\mathbb S^n,\mathbb T^n$, etc.

Some conventions about continuity are made here. We say a piecewise smooth function $u$ is in $C^1(\mathcal V)$, if its value and first derivatives are single-valued at all vertices. The notation $C^k(\mathcal V), C^k(\mathcal E)$ and $C^k(\mathcal F)$, for a given nonnegative integer $k$, are understood in a similar manner. 

\subsection{Trace Structure: Algebraic tools for extra smoothness}
\label{sec:ts}

For each simplex $\tau\in\mathcal{T}$, we define the finite element shape function space $A(\tau)$ as the finite dimensional subspace of $C^{\infty}(\tau)\otimes \mathbb A^{\operatorname{dim}\tau}$, where $\mathbb A^{\operatorname{dim}\tau}$ could be $\mathbb R^n$ (vectors), $\mathbb M^n$ (matrices),  $\mathbb S^n$ (symmetric matrices), $\mathbb T^n$ (traceless matrices), etc. In this paper, all shape function spaces $A(\sigma),\sigma \in \mathcal T_{n}$ of discrete complexex are polynomial function spaces, and therefore a subspace of smooth functions. We now introduce the trace structure of the finite element space.

\begin{definition}[Trace Structure]
	For each $\sigma \in \mathcal T_n$ and $\tau \trianglelefteq \sigma$, assume that there exists a linear trace operator $$\tr_{\sigma \to \tau}: A(\sigma) \to A(\tau).$$
	Particularly, denote by $\tr_{\sigma\rightarrow\sigma} = \operatorname{Id}$. For each $\tau 
    \trianglelefteq\sigma$ and $\eta \trianglelefteq \tau$, there also exists a linear trace operator $\tr_{\tau\to\eta}: A(\tau) \to A(\eta)$, such that for any $u\in A(\sigma)$, 
\begin{equation}\label{eq:trace-commuting} \tr_{\sigma \to \eta} u = 0 \text{ implies } \tr_{\tau \to \eta} \circ \tr_{\sigma \to \tau} u= 0.\end{equation}
Also, denote by $\tr_{\tau\to\tau} =\operatorname{Id}$. Then we call $(A, \operatorname{tr})$ a trace structure. 
\end{definition}
\begin{remark}
    In this paper, the shape function space $A(\eta)$ could be the zero-space $\{0\}$. This means that the finite element has no trace structure on $\eta$. In this case, set the trace operator $\tr_{\tau \to \eta} = 0$ for all $\eta \trianglelefteq \tau$.
\end{remark}

\begin{remark}
    One may expect the composition of traces is again a trace. This is not true in general for the trace systems, as the motivation of introducing the trace systems is exactly to break the composition law. Instead, we do require that the composition of the traces is ``included'' in the trace. One approach to understand condition \eqref{eq:trace-commuting} is shown as follows. Suppose that there exists a linear operator $\rho_{\tau,\eta}$ such that $\rho_{\tau,\eta} \circ \tr_{\sigma \to \eta} = \tr_{\tau \to \eta} \circ \tr_{\sigma \to \tau}$. Then  \eqref{eq:trace-commuting} holds. In practice, such $\rho$ can be easily found, as differential operators or orthogonal projections.
\end{remark}

Now we can define the global space.
\begin{definition}[Global Space] The global space with respect to a trace structure $(A, \operatorname{tr})$ is defined as follows:
$$\bm A = \{ u \in L^2(\Omega)\otimes \mathbb A^n: u|_{\sigma} \in A(\sigma), \tr_{\sigma \to \tau} u|_{\sigma} = \tr_{\sigma' \to \tau} u|_{\sigma'}, \text{ for all } \tau \trianglelefteq \sigma, \sigma'\text{ and }\sigma, \sigma'\in\mathcal{T}_n \}.$$
\end{definition}

The space $\bm A$ collects the discrete functions with single-valued traces, reflecting certain continuity. In the following text, we will always use bold letter $\bm A^{\bullet}$ to denote the global space. We use two examples to illustrate what the trace structure is, and how it reflects a certain finite element.

\begin{example}
Consider the following trace structure in two dimensions:
\begin{itemize}
\item[-] $\displaystyle A(f) := P_k(f), A(e) = P_k(e)$, and $ A(x) = \mathbb R\oplus \mathbb R^2$ for all $f\in\mathcal{F},e\in\mathcal{E}$ and $x\in\mathcal{V}$.
\item[-] $\tr_{f\to e}(u) = u|_e, \tr_{f\to x}(u) = u(x)\oplus \nabla u(x)$ for all edges $e$ on $f$ and vertices $x$ of $f$.
\item[-]  $\tr_{e\to x}(q) = q(x)\oplus \frac{d}{d\bm t}q(x) \bm t$ for all vertices $x$ of $e$.
\end{itemize}
Here, $\bm t$ is the unit tangential vector of $e$. It is easy to see that for each $u\in \bm A$, $u\in C^1(\mathcal{V})\cap C^0(\mathcal{E})$, which means that the global space is the space of the Hermite finite element. See \Cref{fig:ts-hm} for an illustration.

\end{example}

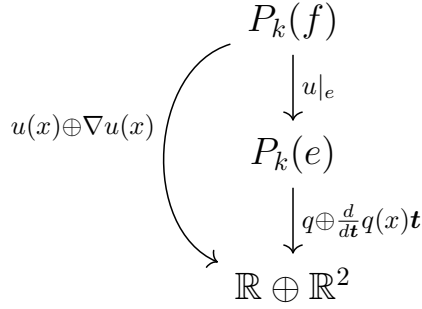
\begin{figure}[htbp]
\large 
\begin{equation*}
\begin{tikzcd}
P_k(f) \ar[d, "{u|_e}"] \ar[dd,swap, bend right=70, "{u(x)\oplus \nabla u(x)}"] \\ 
P_{k}(e)  \ar[d, "{ q\oplus\frac{d}{d\bm t}q(x) \bm t}"]\\ 
\mathbb R \oplus \mathbb R^2
\end{tikzcd}
\end{equation*}
\caption{The trace structure of Hermite element.}
\label{fig:ts-hm}
\end{figure}

\begin{example}
Consider the following trace structure in two dimensions:

\begin{itemize}
\item[-] $\displaystyle A(f) := P_k(f), A(e) = P_k(e) \oplus P_{k-1}(e), A(x) = \mathbb R\oplus \mathbb R^2 \oplus \mathbb M^2$ for all $f\in\mathcal{F},e\in\mathcal{E}$ and $x\in\mathcal{V}$.
\item[-] $\displaystyle \tr_{f\to e}(u) = u|_e\oplus (\frac{\partial u}{\partial \bm n})|_e$, $\tr_{f \to x}(u)= u(x)\oplus\nabla u(x)\oplus \nabla^2u(x)$  for all edges $e$ on $f$ and vertices $x$ of $f$.
\item[-] $\footnotesize \tr_{e \to x}(u_1, u_2) = u_1(x)\oplus \big(u_2(x) \bm n + \frac{d}{d\bm t}u_1(x) \bm t\big)\oplus \big(\frac{d}{d\bm t}u_2(x) \bm n\bm t^T +  \frac{d^2}{d\bm t^2}u_1(x) \bm t  \bm t^T\big)$ for all vertices $x$ of $e$.
\end{itemize}
Here, $\bm t$ and $\bm n$ are the unit tangential vector and normal vector of $e$, respectively. The global space is the space of the Argyris finite element.

\end{example}

To give a better picture relating the classical constructions of finite element spaces, let us introduce the definition of local bubble functions (also known as trace-free functions). Given $\sigma\in \mathcal{T}_n$, then for any $\tau\trianglelefteq \sigma$, the local bubble functions on each $\tau\in\mathcal{T}$ collects the functions that contribute nothing to any subsimplices of $\tau$.
\begin{definition}[Local Bubble Functions]
%Define $$\overline{B}(\eta) := \{(a_{\tau})_{\tau \trianglelefteq \eta} \in \overline{A}(\eta) : a_{\tau} = 0 \mathrm{ for all } \tau\neq \eta .\}$$
%Easy to show $\overline{B}(\sigma)$ is isomorphic to 
Define the bubble space $B(\tau)$ as  
$$ B(\tau) := \{ a \in\tr_{\sigma\rightarrow\tau}A(\sigma): \tr_{\tau\to \eta} a = 0 \text{ for all } \eta \trianglelefteq \tau  \text{ and }\eta \neq\tau\}.$$
\end{definition}
Note that for any $u\in \bm A$, we have $\tr_{\sigma \to \tau} u|_{\sigma} = \tr_{\sigma' \to \tau} u|_{\sigma'}$ for all $ \tau \trianglelefteq \sigma, \sigma'$ and $\sigma, \sigma'\in\mathcal{T}_n$. So the definitions of bubble spaces here will not be ambiguous in the following text.
% For the Lagrange finite element, the bubble function of the edge is $\lambda_e P_{k-2}(e),$ while the face bubble function space is $\lambda_F P_{k-3}(F)$. 

\begin{example}
The bubble space of the Hermite element is:
\begin{itemize}
\item[-] $B(v) = \mathbb R \oplus \mathbb R^2$, corresponding to the value and the gradient at vertex $v$ of the triangle $f$.
\item[-] $B(e) = \{p \in P_k(e): p(x) = 0,\tfrac{\partial p}{\partial \bm t} =0\text{ for all end points } x \text{ of }e\} = \lambda_e^2 P_{k-4}(e)$ for the edge $e$ of $f$.
\item[-] $B(f) = \{p\in P_k(f):p|_{\partial f}=0\}=\lambda_f P_{k-3}(f)$.
\end{itemize}\end{example}

\begin{example}
The bubble space of the Argyris element is:
\begin{itemize}
\item[-] $B(v) = \mathbb R \oplus \mathbb R^2 \oplus \mathbb S^{2}$ for the vertex $v$ of the $f$.
\item[-] $B(e) = \lambda_e^3P_{k-6}(e) \oplus \lambda_e^2P_{k-5}(e)$ for the edge $e$ of $f$.
\item[-] $B(f) = \lambda_f^2 P_{k-6}(f).$
\end{itemize}

\end{example}
Here $\lambda_e$ and $\lambda_f$ denote the standard bubble polynomial function on edge $e$ and face $f$, respectively.

In this paper, all the global finite element spaces will satisfy the following geometric decomposition:

\begin{definition}[Geometric Decomposition]
We say the trace structure satisfies the geometric decomposition, if
% if for all $\tau \subseteq \sigma \in \mathcal T_n$, there exists a lifting $\mathcal L_{\tau} : B(\tau) \to A(\sigma)$ such that 
% $$ \tr_{\eta} \circ \mathcal L_{\tau} = \delta_{\eta\tau} id$$
% for all subsimplices $\eta, \tau \subseteq \sigma$. 	
% \end{definition}

\begin{equation*}
    \sum_{\tau \trianglelefteq \sigma}\operatorname{dim} B(\tau) = \operatorname{dim}A(\sigma),\quad \forall \sigma\in\mathcal{T}_n.
\end{equation*}
\end{definition}
The above definition leads to the following finite element construction.
\begin{proposition}[Finite element from the trace structure]\label{prop:fe}
Let $(A, \tr)$ be a trace structure satisfying geometric decomposition. For $\sigma\in\mathcal{T}_n$ with $\tau\trianglelefteq\sigma$, let $A(\sigma)$ denotes the shape function space over $\sigma$, $B(\tau)$ be the bubble spaces, $B(\tau)^{\vee}$ be the corresponding dual space defined by the trace structure. Then, for $u\in A(\sigma)$, the local degrees of freedom are as follows: $L(\tr_{\sigma\to \tau} u)$ for any $L\in B(\tau)^{\vee}$. And the degrees of freedom are unisolvent for $A(\sigma)$.
\end{proposition}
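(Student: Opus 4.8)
The plan is to show that the linear map $\Phi: A(\sigma) \to \prod_{\tau \trianglelefteq \sigma} B(\tau)^{\vee}$, sending $u$ to the collection $\big(L \mapsto L(\tr_{\sigma \to \tau} u)\big)_{\tau \trianglelefteq \sigma}$, is an isomorphism. Since the geometric decomposition hypothesis gives $\sum_{\tau \trianglelefteq \sigma} \dim B(\tau) = \dim A(\sigma)$, and the target has dimension $\sum_{\tau\trianglelefteq\sigma}\dim B(\tau)^{\vee} = \sum_{\tau\trianglelefteq\sigma}\dim B(\tau)$, it suffices to prove injectivity: if $u \in A(\sigma)$ satisfies $L(\tr_{\sigma\to\tau} u) = 0$ for every $\tau \trianglelefteq \sigma$ and every $L \in B(\tau)^{\vee}$, then $u = 0$. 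The key observation is that $L(\tr_{\sigma\to\tau}u) = 0$ for all $L \in B(\tau)^\vee$ means $\tr_{\sigma\to\tau} u$ lies in the annihilator of $B(\tau)^\vee$ inside $\tr_{\sigma\to\tau}A(\sigma)$; but $B(\tau)^\vee$ is by construction the dual of $B(\tau)$, so this forces $\tr_{\sigma\to\tau} u \notin B(\tau)$ unless $\tr_{\sigma\to\tau} u = 0$. Thus the real content is: \emph{$L(\tr_{\sigma\to\tau}u)=0$ for all $L\in B(\tau)^\vee$ and all $\tau\trianglelefteq\sigma$ implies $\tr_{\sigma\to\tau}u = 0$ for all $\tau\trianglelefteq\sigma$, hence in particular ($\tau=\sigma$) $u=0$.}

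I would argue this by induction on $\dim\tau$, from $\dim\tau = 0$ upward, proving the stronger statement that $\tr_{\sigma\to\tau}u \in B(\tau)$ for every $\tau\trianglelefteq\sigma$. For the base case $\dim\tau = 0$ (a vertex $v$), every $\eta \trianglelefteq v$ with $\eta\neq v$ is empty, so $B(v) = \tr_{\sigma\to v}A(\sigma)$ and trivially $\tr_{\sigma\to v}u\in B(v)$. For the inductive step, fix $\tau\trianglelefteq\sigma$ and suppose $\tr_{\sigma\to\eta}u \in B(\eta)$ for all proper subsimplices $\eta\triangleleft\tau$. Let $a := \tr_{\sigma\to\tau}u \in \tr_{\sigma\to\tau}A(\sigma)$. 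For any $\eta\triangleleft\tau$ with $\eta\neq\tau$, the trace commuting property \eqref{eq:trace-commuting}, together with the fact (which must be extracted from the inductive hypothesis, using that $B(\eta)$ consists of functions with vanishing traces to all proper subsimplices of $\eta$) that $\tr_{\sigma\to\eta}u$ has all its further traces zero, lets me conclude $\tr_{\tau\to\eta}a = \tr_{\tau\to\eta}\circ\tr_{\sigma\to\tau}u$; I then need $\tr_{\sigma\to\eta}u = 0$ to invoke \eqref{eq:trace-commuting} and kill $\tr_{\tau\to\eta}a$. This is where the hypothesis on $B(\eta)^\vee$ enters: since $\tr_{\sigma\to\eta}u \in B(\eta)$ by the inductive hypothesis and $L(\tr_{\sigma\to\eta}u)=0$ for all $L\in B(\eta)^\vee$, and $B(\eta)^\vee$ separates points of $B(\eta)$, we get $\tr_{\sigma\to\eta}u = 0$. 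Then \eqref{eq:trace-commuting} gives $\tr_{\tau\to\eta}a = 0$ for all such $\eta$, i.e. $a \in B(\tau)$. Finally, $L(a) = L(\tr_{\sigma\to\tau}u) = 0$ for all $L\in B(\tau)^\vee$ and $B(\tau)^\vee$ separates points of $B(\tau)$, so $a = \tr_{\sigma\to\tau}u = 0$; taking $\tau=\sigma$ gives $u=0$, completing both the induction and the injectivity argument.

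The main obstacle I anticipate is the bookkeeping in the inductive step around applying \eqref{eq:trace-commuting}: that condition is stated only as ``$\tr_{\sigma\to\eta}u = 0$ implies $\tr_{\tau\to\eta}\circ\tr_{\sigma\to\tau}u = 0$,'' so I must be careful that I only ever apply it after having already established $\tr_{\sigma\to\eta}u = 0$ for the relevant $\eta$, which is exactly what the induction provides one dimension at a time. A secondary point to be careful about is that the definition of $B(\tau)$ uses $\tr_{\sigma\to\tau}A(\sigma)$, so all the dual pairings live on this image space and I should phrase ``$B(\tau)^\vee$ separates points of $B(\tau)$'' precisely as: the natural pairing $B(\tau)\times B(\tau)^\vee\to\R$ is nondegenerate, which is the meaning of ``dual space defined by the trace structure'' in the statement. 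Once these are set up cleanly, everything else is the dimension count supplied by the geometric decomposition hypothesis.
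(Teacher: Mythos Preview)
Your proposal is correct and follows essentially the same approach as the paper: reduce to injectivity by the dimension count, then argue by induction on $\dim\tau$ that $\tr_{\sigma\to\tau}u=0$, using \eqref{eq:trace-commuting} to show membership in $B(\tau)$ and the nondegeneracy of $B(\tau)^\vee$ to conclude vanishing. The paper's version is slightly more streamlined in that it takes ``$\tr_{\sigma\to\tau}u=0$'' directly as the inductive statement rather than passing through ``$\tr_{\sigma\to\tau}u\in B(\tau)$'', but the content is identical.
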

\begin{proof}
    Note that
 \begin{equation*}
    \sum_{\tau \trianglelefteq \sigma}\operatorname{dim}B(\tau)^{\vee} = \sum_{\tau \trianglelefteq \sigma}\operatorname{dim}B(\tau) = \operatorname{dim}A(\sigma),\quad \forall \sigma\in\mathcal{T}_n.
\end{equation*}   
It suffices to prove that for any  $u \in A(\sigma)$, if 
$$ L(\tr_{\sigma\to \tau} u)=0, \quad\forall \tau\trianglelefteq \sigma, L \in B(\tau)^{\vee},$$ then $u=0$. 

We first show that for any $ \tau \trianglelefteq \sigma$ and $\tau\neq \sigma$, it holds that $\tr_{\sigma\to \tau} u=0$. If $\tau\in\mathcal{T}_0$, it is easy to see that $\tr_{\sigma\to \tau} u=0$. Next we assume that for any $0\leq k\leq n-2,\tau\in\mathcal{T}_{k}$, we have $\tr_{\sigma\to \tau} u=0$. Then for any $\tau\in\mathcal{T}_{k+1}$ with $\tau \trianglelefteq \sigma$, note that for any $\eta\trianglelefteq\tau$ and $\eta\neq\tau$, it holds that
\begin{equation*}
      \tr_{\tau \to \eta} \circ \tr_{\sigma \to \tau}u=0.
\end{equation*}
Hence $\tr_{\sigma \to \tau}u\in B(\tau)$, then from 
$ L(\tr_{\sigma\to \tau} u)=0, \quad\forall L \in B(\tau)^{\vee},$
we can get $\tr_{\sigma \to \tau}u =0$. The argument above shows that $u\in B(\sigma)$. Finally, since
$L(u)=0$ for all $L\in B(\sigma)^{\vee}$,
it holds that $u=0$, which completes the proof.
\end{proof}

%By the commuting property, it holds that
%\begin{proposition}
%$(\overline{A}, \overline{\tr}, d)$ is a finite element system.
%\end{proposition}
%
%\begin{proof}
%It holds that $\overline{\tr} \circ \overline{\tr} = \overline{\tr}$ and $d$ commutes with $\tr$. 
%\end{proof}

\subsection{Recap on Complexes: de Rham and beyond}\label{sec:comp}

This section gives a basic recap on the complexes in finite element methods, and the readers can refer to \cite{arnold2018finite} for more details. A (smooth) differential complex has the form
\begin{equation*}
\begin{tikzcd}
    0 \ar[r,""]& C^{\infty}(\Omega) \otimes \mathbb X^0 \ar[r,"d"]& C^{\infty}(\Omega) \otimes \mathbb X^1\ar[r,"d"]& \cdots \ar[r,"d"]& C^{\infty}(\Omega) \otimes \mathbb X^n \ar[r,""]&0.	
\end{tikzcd}
\end{equation*}
Here $\mathbb X^{i}$ is a linear space and can be $\mathbb R^n,\mathbb M^n,\mathbb S^n,\mathbb T^n$, etc. The operator $d$ is the linear differential operator. Denote by $M$ the maximum order of the differential operators. In the examples considered in this paper, $d$ will be some first or second order operator. The complex requires $d \circ d = 0$, and therefore the cohomology can be defined:
\begin{equation*}
\mathcal H^k = \frac{\ker(d: C^{\infty}(\Omega) \otimes \mathbb X^{k} \to C^{\infty}(\Omega) \otimes \mathbb X^{k+1})}{\im(d: C^{\infty}(\Omega) \otimes \mathbb X^{k-1} \to C^{\infty}(\Omega) \otimes \mathbb X^k)}.
\end{equation*}

In this paper, we will consider $L^2$ bounded projections, which needs the following $L^2$ complex.
\begin{equation*}
\begin{tikzcd}
    0 \ar[r,""]& L^2(\Omega) \otimes \mathbb X^0 \ar[r,"d"]& L^2(\Omega) \otimes \mathbb X^1\ar[r,"d"]& \cdots \ar[r,"d"]& L^2(\Omega) \otimes \mathbb X^n \ar[r,""]&0.	
\end{tikzcd}
\end{equation*}
Note here all the differential operators are considered as unbounded operators. 

We also introduce the adjoint complex, which takes the adjoint operator of each $d$ in the $L^2$ norm. 
\begin{equation*}
\begin{tikzcd}
    0 \ar[r,""]& L^2(\Omega) \otimes \mathbb X^n \ar[r,"d^*"]& L^2(\Omega) \otimes \mathbb X^{n-1}\ar[r,"d^*"]& \cdots \ar[r,"d^*"]& L^2(\Omega) \otimes \mathbb X^0 \ar[r,""]&0.	
\end{tikzcd}
\end{equation*}
Specifically, the formal adjoint operator $d^{\star}: L^2(\Omega)\otimes \mathbb X^{k} \to L^2(\Omega) \otimes \mathbb X^{k-1}$ is the unbounded operator satisfying that 
$$(d^{\star} a, b)_{\Omega} = (a, db)_{\Omega},\quad \forall~b \in \mathcal{D}(d)$$
here $\mathcal{D}(d)$ is the domain of $d$. Next we introduce smooth complexes considered in this paper. The most important complex is the de Rham complex, for clarity here we only consider the vector proxy version. 

In two dimensions, the de Rham complex reads as 
\begin{equation*}
    \begin{tikzcd}
        0 \ar[r,""] & C^{\infty}(\Omega)  \ar[r,"\grad"]& C^{\infty} (\Omega)\otimes \mathbb R^2 \ar[r,"\rot"] &C^{\infty}(\Omega) \ar[r,""]& 0,
    \end{tikzcd}
\end{equation*} 
while in three dimensions, it reads as 
\begin{equation*}
    \begin{tikzcd}
        0 \ar[r,""] & C^{\infty}(\Omega)  \ar[r,"\grad"]& C^{\infty} (\Omega)\otimes \mathbb R^3 \ar[r,"\curl"] &C^{\infty}(\Omega)\otimes \mathbb R^3 \ar[r,"\div"] &C^{\infty}(\Omega)\ar[r,""]& 0.
    \end{tikzcd}
\end{equation*} 
The cohomology determined by the de Rham complex is called de Rham cohomology, denoted as $\mathcal H_{dR}(\Omega)$. de Rham first proved that the de Rham cohomology is dual to the simplicial homology, where he proposed the idea of \emph{currents} \cite{deRham1984}. A current of a $k$ simplex is defined as a $k$-form valued distribution sending the test function to the trace with respect to this simplex. Based on the standard vector proxies, we define the following currents in three dimensions by associating 
\begin{enumerate}
	\item each vertex a linear functional $\delta_x \in (C^{\infty}(\Omega))' : u \mapsto u(x)$;
	\item each edge with the unit tangential vector $\bm t$ a linear functional $\delta_e \in (C^{\infty}(\Omega) \otimes \mathbb R^3)' : \bm u \mapsto \int_e \bm u \cdot \bm t$;
	\item each face with the unit normal vector $\bm n$ a linear functional $\delta_f \in (C^{\infty}(\Omega) \otimes \mathbb R^3)' : \bm u \mapsto \int_f \bm u \cdot \bm n$;
	\item each cell a linear functional $\delta_K \in (C^{\infty}(\Omega))' : u \mapsto \int_K  u$.
\end{enumerate}
By using the currents, de Rham established the duality between the simplicial homology and the de Rham cohomology, which will be also used in this paper. The key insight of his proof is based on the following Stokes' formula: for any $\sigma \in \mathcal T_m$ and $w \in C^{\infty}(\Omega) \otimes \mathbb X^{m-1}$, it holds that 
$$\delta_{\sigma}(dw)= \delta_{\partial \sigma}(w) := \sum_{\tau \trianglelefteq_1 \sigma} \mathcal O(\tau, \sigma) \delta_{\tau}(w).$$
Here, the orientation is standardly defined as in \cite{hatcher2002algebraic}: for two oriented simplices $\tau$ and $\sigma = [v_0,\cdots, v_s]$, $\mathcal O(\tau,\sigma) = (-1)^j$ if $\tau =   [v_0,\cdots, \widehat{v_j}, v_s]$ ($\widehat{v_j}$ means deleting $v_j$), and $\mathcal O(\tau, \sigma) = 0$ otherwise. 

\subsection{Generalized Currents}\label{sec:gen-cur}

There are also some other complexes, whose cohomology can be related to de Rham cohomology. The procedure that produces such a complex is called Bernstein-Gelfand-Gelfand reduction, see \cite{arnold2021complexes} for more details. Specifically, in this paper, the following complexes will be considered as examples. {The definitions of the differential operators can be found in Section 3 and 4.}
In two dimensions, we have the hessian complex:
\begin{equation*}\begin{tikzcd} 
0 \ar[r,""] &  C^{\infty}(\Omega)  \ar[r,"\hess"] & C^{\infty}(\Omega)   \otimes \mathbb S^2\ar[r,"\rot "] & C^{\infty}(\Omega) \otimes \mathbb R^2 \ar[r] & 0,\end{tikzcd}
\end{equation*}
whose cohomology is isomorphic to $P_1(\Omega) \otimes \mathcal H_{dR}(\Omega)$, and the divdiv-complex
\begin{equation*}\begin{tikzcd} 0 \ar[r,""] &  C^{\infty}(\Omega)  \otimes \mathbb R^2 \ar[r,"\sym\curl"] & C^{\infty}(\Omega)   \otimes \mathbb S^2\ar[r,"\div\div "] & C^{\infty}(\Omega) \ar[r] & 0,\end{tikzcd}\end{equation*}
whose cohomology is isomorphic to $\mathcal{RT}^2 \otimes \mathcal H_{dR}(\Omega)$. Here $\mathcal{RT}^2 :=\{\bm a+b\bm x,\bm a\in\mathbb R^2,b\in\mathbb R\}$, whose dimension is 3.

In three dimensions, we have the hessian complex
\begin{equation*}\begin{tikzcd} 0 \ar[r,""] &   C^{\infty}(\Omega) \ar[r,"\hess"] &  C^{\infty}(\Omega) \otimes \mathbb S^3  \ar[r,"\curl"] & C^{\infty}(\Omega) \otimes \mathbb T^3   \ar[r,"\div"] &   C^{\infty}(\Omega) \otimes \mathbb R^3 \ar[r] &0,\end{tikzcd}\end{equation*}
whose cohomology is isomorphic to ${P}_1(\Omega) \otimes \mathcal H_{dR}(\Omega)$, and the divdiv complex
\begin{equation*}\begin{tikzcd} 0 \ar[r,""] &  C^{\infty}(\Omega)  \otimes \mathbb R^3 \ar[r,"\dev\grad"] & C^{\infty}(\Omega) \otimes \mathbb T^3\ar[r,"\sym\curl"] & C^{\infty}(\Omega) \otimes \mathbb S^3   \ar[r,"\div\div"] &  C^{\infty}(\Omega)   \ar[r] &0,\end{tikzcd}\end{equation*}
whose cohomology is isomorphic to $\mathcal{RT}^3 \otimes \mathcal H_{dR}(\Omega)$, and the elasticity complex
\begin{equation*}\begin{tikzcd} 0 \ar[r,""] &  C^{\infty}(\Omega) \otimes \mathbb R^3 \ar[r,"\sym\grad"] & C^{\infty}(\Omega) \otimes \mathbb S^3  \ar[r,"\inc"] & C^{\infty}(\Omega) \otimes \mathbb S^3 \ar[r,"\div"] &   C^{\infty}(\Omega) \otimes \mathbb R^3 \ar[r] &0,\end{tikzcd}\end{equation*}
whose cohomology is isomorphic to $\mathcal{RM}^3 \otimes \mathcal H_{dR}(\Omega)$. Here $\mathcal{RT}^3:=\{\bm a+b\bm x,\bm a\in\mathbb R^3,b\in \mathbb R\}$, whose dimension is 4. And $\mathcal{RM}^3:=\{\bm a+\bm b\times \bm x,\bm a\in\mathbb R^3,\bm b\in\mathbb R^3\}$, whose dimension is 6.

\begin{definition}[Generalized Currents]
Given a finite dimensional Hilbert space $\mathcal{H}$. Suppose that for the complex
	\begin{equation}
	\label{eq:diff-complex-l2} \begin{tikzcd} 0 \ar[r,""] &  C^{\infty}(\Omega) \otimes \mathbb X^0 \ar[r,"d"] &   C^{\infty}(\Omega)  \otimes \mathbb X^1 \cdots \ar[r,"d"] &   C^{\infty}(\Omega)  \otimes \mathbb X^n \ar[r,""] &  0\end{tikzcd},\end{equation}
and each $\sigma \in \mathcal T_{m}$, there exists a linear surjective mapping $ \Upsilon_{\sigma} : C^{\infty}(\Omega) \otimes \mathbb X^m \to \mathcal H$, such that the generalized Stokes' formula holds for $w \in C^{\infty}(\Omega) \otimes \mathbb X^{m-1}$:
\begin{equation*}
	\Upsilon_{\sigma}(dw) = \Upsilon_{\partial \sigma}(w) = \sum_{\tau \trianglelefteq_1 \sigma} \mathcal O(\tau, \sigma) \Upsilon_{\tau}(w).	
\end{equation*}
We then call $(\Upsilon_{\bullet
}, \mathcal H)$ a family of generalized currents for \eqref{eq:diff-complex-l2}. 
\end{definition}

Next, we introduce some examples of tensor complexes (e.g., hessian, elasticity, divdiv). Note that the cohomolgy of each complex is isomorphic to $\mathcal{Z}\otimes \mathcal{H}_{dR}(\Omega)$, where $\mathcal{Z}$ is a finite-dimensional Hilbert space on $\Omega$ with normalized orthogonal basis $\{z_i\}_{1\leq i\leq \operatorname{dim}\mathcal{Z}}$. 

\subsubsection{Two-dimensional Results}

First, we introduce the generalized currents for 2D Hessian complex. Note that now $\mathcal{Z} = P_1(\Omega)$. 
\begin{lemma}[Generalized currents for 2D Hessian]
\label{lem:currents-2D-hessian}
Let $\psi_1,\psi_2  ,\psi_3$ be a basis of $\mathcal{RT}^2= \{\bm a + b \bm x : \bm a \in\mathbb R^2 , b \in \mathbb R \}$. Then set 
\begin{equation*}
    \Upsilon_x: C^{\infty}(\bar\Omega)  \to \mathcal Z, \quad\Upsilon_x(u)  = \sum_{i=1}^3\big((\psi_i \cdot \nabla u)(x) - \tfrac{1}{2}(\div \psi_i \cdot u)(x)\big)z_i,
\end{equation*}
\begin{equation*}
    \Upsilon_e: C^{\infty}(\bar\Omega) \otimes \mathbb S ^ 2\to \mathcal Z,\quad \Upsilon_e(\bm \sigma) = \big(\int_{e} (\bm \sigma \bm t) \cdot \psi_i\big)z_i,
\end{equation*}
and 
\begin{equation*}
    \Upsilon_f: C^{\infty}(\bar\Omega) \otimes \mathbb R^2  \to \mathcal Z,\quad \Upsilon_f(\bm p) = \big(\int_{f} \bm p \cdot \psi_i\big)z_i,
\end{equation*}
for each $x\in\mathcal{V},e\in\mathcal{E}$ and $f\in\mathcal{F}$. Then, $(\Upsilon_{\bullet},\mathcal Z)$ is the generalized current.
\end{lemma}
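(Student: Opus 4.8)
The goal is to verify that the three operators $\Upsilon_x,\Upsilon_e,\Upsilon_f$ defined via the basis $\{\psi_i\}$ of $\mathcal{RT}^2$ are surjective and satisfy the generalized Stokes' formula on the Hessian complex
$$
\begin{tikzcd}
0 \ar[r] & C^\infty(\Omega) \ar[r,"\hess"] & C^\infty(\Omega)\otimes\mathbb S^2 \ar[r,"\rot"] & C^\infty(\Omega)\otimes\mathbb R^2 \ar[r] & 0.
\end{tikzcd}
$$
Surjectivity is immediate in each case: for $\Upsilon_f$ and $\Upsilon_e$ the map $\bm p \mapsto (\int \bm p\cdot\psi_i)z_i$ is onto because the Gram matrix of $\{\psi_i\}$ is invertible and one can freely choose $\bm p$ (respectively $\bm\sigma\bm t$); for $\Upsilon_x$ one picks $u$ with prescribed value and gradient at $x$ and inverts the same nonsingular linear map. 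So the substance is the two Stokes identities.

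First I would handle the top level: for $\sigma = e\in\mathcal E$ with endpoints forming $\partial e$, and $w = u\in C^\infty(\Omega)$, I must check $\Upsilon_e(\hess u) = \Upsilon_{\partial e}(u) = \sum_{x\trianglelefteq_1 e}\mathcal O(x,e)\,\Upsilon_x(u)$. The left side is $\big(\int_e (\hess u\,\bm t)\cdot\psi_i\big)z_i$. Writing $\hess u\,\bm t = \frac{d}{ds}(\nabla u)$ along $e$ (since the tangential derivative of $\nabla u$ is $\hess u\,\bm t$), I integrate by parts along the edge against the affine vector field $\psi_i = \bm a_i + b_i\bm x$:
$$
\int_e \tfrac{d}{ds}(\nabla u)\cdot\psi_i = \big[(\nabla u)\cdot\psi_i\big]_{\partial e} - \int_e (\nabla u)\cdot\tfrac{d}{ds}\psi_i .
$$
Here $\frac{d}{ds}\psi_i = b_i\bm t$ along $e$, and $b_i\bm t\cdot\nabla u = b_i\,\frac{du}{ds}$, so $\int_e(\nabla u)\cdot\frac{d}{ds}\psi_i = \int_e b_i\,\frac{du}{ds} = b_i[u]_{\partial e}$. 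Noting $\div\psi_i = \operatorname{tr}(\nabla\psi_i) = 2b_i$ in two dimensions, this second term equals $[\tfrac12(\div\psi_i)u]_{\partial e}$. Hence the edge integral equals $\big[(\nabla u)\cdot\psi_i - \tfrac12(\div\psi_i)u\big]_{\partial e}$, which is exactly $\sum_x\mathcal O(x,e)\big((\psi_i\cdot\nabla u)(x) - \tfrac12(\div\psi_i\cdot u)(x)\big)$ — the vertex contribution $\sum_x\mathcal O(x,e)\Upsilon_x(u)$ summed over $i$. This closes the vertex–edge identity; I expect this computation to be the conceptual core, since the $-\tfrac12\div\psi_i$ correction in $\Upsilon_x$ is precisely engineered to absorb the affine part of the boundary term.

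Next I would do the face level: for $\sigma = f\in\mathcal F$ and $w = \bm\sigma\in C^\infty(\Omega)\otimes\mathbb S^2$, I need $\Upsilon_f(\rot\bm\sigma) = \Upsilon_{\partial f}(\bm\sigma) = \sum_{e\trianglelefteq_1 f}\mathcal O(e,f)\,\Upsilon_e(\bm\sigma)$, i.e. $\int_f (\rot\bm\sigma)\cdot\psi_i = \sum_e\mathcal O(e,f)\int_e(\bm\sigma\bm t)\cdot\psi_i$. This is a standard integration-by-parts / Green's identity for the matrix $\rot$ operator over the polygon $f$: one uses $\int_f (\rot\bm\sigma)\cdot\bm\psi - \int_f \bm\sigma : (\text{sym})\nabla\bm\psi^{\perp}\dots$ type formula; more directly, componentwise $\rot$ of a row, integrated against a smooth vector field, produces a boundary term $\int_{\partial f}(\bm\sigma\bm t)\cdot\psi_i$ plus an interior term involving derivatives of $\psi_i$. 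The point is that $\rot$ here is designed so that its $L^2$-adjoint pairing with the (constant-curl) fields $\psi_i\in\mathcal{RT}^2$ kills the interior term: since $\psi_i$ is affine, the relevant second-order/curl combination of $\psi_i$ vanishes, leaving only the boundary integral, which is $\sum_e\mathcal O(e,f)\int_e(\bm\sigma\bm t)\cdot\psi_i$ by decomposing $\partial f$ into its edges with induced orientation. I would cite the Green's identity for the $\rot$--$\hess$ pair (e.g. from \cite{arnold2021complexes} or the 2D Hessian complex literature) rather than rederive it.

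The main obstacle is bookkeeping of orientations and sign conventions — reconciling the edge parametrization direction implicit in $\bm t$ with $\mathcal O(x,e)$ and $\mathcal O(e,f)$ — together with confirming that $\rot$ applied to the affine fields $\psi_i$ contributes no interior term (equivalently, that $\mathcal{RT}^2$ is the right "constants" space for this complex, which is consistent with its cohomology being $P_1(\Omega)\otimes\mathcal H_{dR}(\Omega)$ and with $\mathcal{RT}^2$ spanning the kernel-of-$\rot$ modulo exact part at the polynomial level). Once the affine-field cancellation is checked, both identities reduce to one integration by parts each, so the proof is short; the $-\tfrac12\div\psi_i$ term in $\Upsilon_x$ is the only non-obvious ingredient and its role is exactly the one exhibited in the edge computation above.
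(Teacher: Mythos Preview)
Your approach is correct and matches the paper's proof, which is simply ``By direct integrations by parts.'' Your edge--vertex computation is exactly the intended one, and for the face--edge identity the interior term you worry about is $\int_f \bm\sigma : \curl\psi_i$, which vanishes because $\curl\psi_i$ is skew-symmetric for $\psi_i\in\mathcal{RT}^2$ (equivalently $\sym\curl\psi_i=0$), so there is no need to cite an external Green's identity---the one-line integration by parts suffices.
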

\begin{proof}
    By direct integrations by parts.
\end{proof}

Next, we introduce the generalized currents for 2D divdiv complex. Here $\mathcal{Z} = \mathcal{RT}^2= \{\bm a + b \bm x : \bm a \in\mathbb R^2 , b \in \mathbb R \}$, then we can introduce the following generalized currents.
\begin{lemma}[Generalized Currents for 2D divdiv]
\label{lem:currents-2D-divdiv}
	Let $\psi_1, \psi_2, \psi_3$ be a basis of $P_1(\Omega)$. Set 
    \begin{equation*}
        \Upsilon_{x}: C^{\infty}(\bar\Omega) \otimes \mathbb R^2 \to \mathcal Z,\quad \Upsilon_x(\bm u) = \sum_{i=1}^3\big( \tfrac{1}{2} (\div \bm u \psi_i)(x)-(\bm u \cdot \nabla \psi_i)(x)\big)z_i,
    \end{equation*}
    \begin{equation*}
        \Upsilon_{e}: C^{\infty}(\bar\Omega) \otimes \mathbb S^2 \to \mathcal Z, \quad \Upsilon_e(\bm \sigma) = \sum_{i=1}^3\big(\int_e (\div \bm \sigma\cdot \bm n)\psi_i-(\bm \sigma \bm n \cdot \nabla \psi_i)\big)z_i,
    \end{equation*}
    \begin{equation*}
        \Upsilon_{f}: C^{\infty}(\bar\Omega) \to \mathcal Z,\quad \Upsilon_f(q) = \sum_{i=1}^3\big(\int_f  q \psi_i\big)z_i,
    \end{equation*}
for each $x\in\mathcal{V},e\in\mathcal{E}$ and $f\in\mathcal{F}$. Then, $(\Upsilon_{\bullet},\mathcal Z)$ is the generalized current.
\end{lemma}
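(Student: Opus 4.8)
The plan is to verify the two defining properties of a family of generalized currents for the $2$D divdiv complex $0\to C^{\infty}(\Omega)\otimes\mathbb R^2\xrightarrow{\sym\curl}C^{\infty}(\Omega)\otimes\mathbb S^2\xrightarrow{\div\div}C^{\infty}(\Omega)\to 0$: that each $\Upsilon_\sigma$ is linear and surjective onto $\mathcal Z=\mathcal{RT}^2$, and that the generalized Stokes formula $\Upsilon_\sigma(dw)=\Upsilon_{\partial\sigma}(w)$ holds in the two nontrivial cases, namely $\sigma=e\in\mathcal E$ with $d=\sym\curl$ and $w\in C^{\infty}(\bar\Omega)\otimes\mathbb R^2$, and $\sigma=f\in\mathcal F$ with $d=\div\div$ and $w\in C^{\infty}(\bar\Omega)\otimes\mathbb S^2$ (there is no condition at vertices). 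Linearity is immediate. For surjectivity one uses that $\{p\mapsto p(x),\,p\mapsto\partial_1 p,\,p\mapsto\partial_2 p\}$ is a basis of $P_1(\Omega)^*$ and that the mass matrix of $\psi_1,\psi_2,\psi_3$ over any edge or face is invertible; feeding in $q$, $\bm\sigma$, $\bm u$ of an appropriate form (for instance $q=\sum_j c_j\psi_j$ for $\Upsilon_f$, and for $\Upsilon_e$ a $\bm\sigma$ with $\bm\sigma\bm n|_e=0$ but $\div\bm\sigma\cdot\bm n|_e$ prescribed) then shows each $\Upsilon_\sigma$ maps onto $\mathbb R^3\cong\mathcal Z$.

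The face identity is the easy half. For $\bm\sigma\in C^{\infty}(\bar\Omega)\otimes\mathbb S^2$ and each $i$, integrating by parts twice over $f$ and using $\nabla\nabla\psi_i=0$ (as $\psi_i\in P_1$) gives
\begin{equation*}
\int_f(\div\div\bm\sigma)\,\psi_i=-\int_f\div\bm\sigma\cdot\nabla\psi_i+\int_{\partial f}(\div\bm\sigma\cdot\bm n)\psi_i=\int_{\partial f}\big[(\div\bm\sigma\cdot\bm n)\psi_i-(\bm\sigma\bm n)\cdot\nabla\psi_i\big].
\end{equation*}
Decomposing $\int_{\partial f}=\sum_{e\trianglelefteq_1 f}\mathcal O(e,f)\int_e$ then yields $\Upsilon_f(\div\div\bm\sigma)=\sum_{e\trianglelefteq_1 f}\mathcal O(e,f)\,\Upsilon_e(\bm\sigma)$.

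The edge identity is the substantive half, and this is the step I expect to be the real work. Fix $e\in\mathcal E$ with unit tangent $\bm t$ and unit normal $\bm n$ and take $\bm v\in C^{\infty}(\bar\Omega)\otimes\mathbb R^2$. The strategy is to show that, along $e$, the $\Upsilon_e$-integrand evaluated at $\bm\sigma=\sym\curl\bm v$ is a total tangential derivative. For this I would first establish the two pointwise identities on $e$,
\begin{equation*}
(\div\sym\curl\bm v)\cdot\bm n=\tfrac12\,\partial_{\bm t}(\div\bm v),\qquad(\sym\curl\bm v)\bm n=\partial_{\bm t}\bm v-\tfrac12(\div\bm v)\,\bm t,
\end{equation*}
which follow by writing $\sym\curl$ in coordinates (using, in $2$D, the identity $\div\sym\curl\bm v=\tfrac12\curl(\div\bm v)$) and expressing $\bm n$ through $\bm t$. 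Since $\nabla\psi_i$ is constant, combining the two identities gives, on $e$,
\begin{equation*}
(\div\sym\curl\bm v\cdot\bm n)\,\psi_i-(\sym\curl\bm v)\bm n\cdot\nabla\psi_i=\partial_{\bm t}\!\Big[\tfrac12(\div\bm v)\psi_i-\bm v\cdot\nabla\psi_i\Big].
\end{equation*}
Integrating over $e$ and reading off the endpoint values, with the weights $\mathcal O(x,e)$, reproduces exactly the $i$-th component of $\Upsilon_x(\bm v)$, so $\Upsilon_e(\sym\curl\bm v)=\sum_{x\trianglelefteq_1 e}\mathcal O(x,e)\,\Upsilon_x(\bm v)$.

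The main obstacle is not a single hard estimate but keeping signs and orientations consistent: one has to fix the precise definition of $\sym\curl$ from \Cref{sec:2d} and tie the choice of $\bm n$ relative to $\bm t$ to the simplicial orientation $\mathcal O(\cdot,\cdot)$ so that the signs in the two pointwise identities, and in the endpoint evaluation, all line up; once a convention is fixed, those identities are one-line computations. As an alternative to the direct calculation, one could instead transport $(\Upsilon_\bullet,\mathcal Z)$ from \Cref{lem:currents-2D-hessian} using the duality between the $2$D Hessian and $2$D divdiv complexes: formal adjointness together with the $90^{\circ}$ rotation $\bm\sigma\mapsto\chi\bm\sigma\chi^{T}$ (under which $\div\div\bm\sigma=\rot\rot(\chi\bm\sigma\chi^{T})$) exchanges the roles of $P_1(\Omega)$ and $\mathcal{RT}^2$ and carries the Hessian currents onto the ones in the statement.
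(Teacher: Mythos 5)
Your proposal is correct in substance, and it is genuinely more self-contained than what the paper offers: the paper's ``proof'' of this lemma is a bare citation to Lemma~4.2 and~4.4 of the distributional-complex paper of Hu--Lin--Zhang, so the direct verification you carry out (linearity/surjectivity plus the two Stokes identities) is exactly the computation being delegated, and it parallels what the paper itself does for the 2D Hessian currents (``by direct integrations by parts''). Your two pointwise edge identities are consistent with the lemma the paper records later for the Hu--Ma--Zhang divdiv$+$ element, namely $\bm n^T\bm\sigma\bm n=\partial_{\bm t}(\bm u\cdot\bm n)$, $\bm t^T\bm\sigma\bm n=\partial_{\bm t}(\bm u\cdot\bm t)-\tfrac12\div\bm u$, $\div\bm\sigma\cdot\bm n=\tfrac12\partial_{\bm t}(\div\bm u)$ for $\bm\sigma=\sym\curl\bm u$; recombining the first two with constant $\bm t,\bm n$ gives your $(\sym\curl\bm v)\bm n=\partial_{\bm t}\bm v-\tfrac12(\div\bm v)\bm t$, and the total-tangential-derivative structure of the edge integrand then follows since $\nabla\psi_i$ is constant. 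The face identity via double integration by parts and $\nabla\nabla\psi_i=0$ is also right.

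One small inaccuracy in your surjectivity argument: the Gram matrix of $\psi_1,\psi_2,\psi_3$ over an \emph{edge} is not invertible, since $P_1(\Omega)$ restricted to a line is only two-dimensional (the linear function vanishing on the line containing $e$ restricts to zero). Consequently, test fields with $\bm\sigma\bm n|_e=0$ and $\div\bm\sigma\cdot\bm n|_e$ prescribed can only reach a two-dimensional subspace of $\mathcal Z^*$; to get full surjectivity of $\Upsilon_e$ you must also use the second term, e.g.\ the constant field $\bm\sigma=\bm n\bm n^T$, which detects the remaining functional $\psi\mapsto\partial_{\bm n}\psi$ dual to the linears vanishing on $e$. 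This is a one-line fix and does not affect the Stokes identities, which are the substantive content of the lemma.
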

\begin{proof}
    See \cite[Lemma 4.2 and 4.4]{hu2025distributional}.
\end{proof}
\begin{remark}
The generalized currents were used to construct distributional Hessian complex, as described in \cite{hu2025distributional}. The construction comes from the duality of Hessian complex and divdiv complex.
\end{remark}

\subsubsection{Three-dimensional Results}

Here, we introduce the generalized currents of tensor complexes in three dimensions. 
We first introduce the generalized currents for Hessian complex in three dimensions.
\begin{lemma}[Generalized currents for 3D Hessian]
\label{lem:currents-3d-hessian}
Note that $\mathcal Z = P_1(\Omega)$, let $\{\psi_i\}_{1\leq i\leq 4}$ be a basis of $\mathcal{RT}^3 = \{\bm a + b \bm x : \bm a \in\mathbb R^3 , b \in \mathbb R \}$. Set 
\begin{equation*}
    \Upsilon_x: C^{\infty}(\bar \Omega)  \to \mathcal Z, \quad \Upsilon_x(u)  =\sum_{i=1}^4\big( (\psi_i \cdot \nabla u)(x) - \tfrac{1}{3}(\div \psi_i \cdot u)(x)\big)z_i,
\end{equation*}
\begin{equation*}
    \Upsilon_e: C^{\infty}(\bar\Omega) \otimes \mathbb S ^3\to \mathcal Z, \quad\Upsilon_e(\bm \sigma) = \sum_{i=1}^4\big(\int_{e} (\bm \sigma \bm t) \cdot \psi_i\big)z_i,
\end{equation*}
\begin{equation*}
    \Upsilon_f: C^{\infty}(\bar\Omega) \otimes \mathbb T^3  \to \mathcal Z,\quad \Upsilon_f(\bm \tau) = \sum_{i=1}^4\big(\int_{f}(\bm \tau \bm n) \cdot \psi_i\big)z_i,
\end{equation*}
\begin{equation*}
    \Upsilon_T: C^{\infty}(\bar\Omega) \otimes \mathbb R^3  \to \mathcal Z, \quad\Upsilon_T(\bm q) = \sum_{i=1}^4\big(\int_{T} \bm q \cdot \psi_i\big)z_i.
\end{equation*}
for any $x\in\mathcal{V},e\in\mathcal{E},f\in\mathcal{F}$ and $T\in\mathcal{K}$. Then, $(\Upsilon_{\bullet},\mathcal Z)$ is the generalized currents.
\end{lemma}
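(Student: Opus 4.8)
$$\Upsilon_{\sigma}(dw) = \Upsilon_{\partial \sigma}(w) = \sum_{\tau \trianglelefteq_1 \sigma} \mathcal O(\tau, \sigma) \Upsilon_{\tau}(w)$$
for each of the three relevant dimensions of $\sigma$ (edge, face, tetrahedron), since for vertices there is nothing to prove. The differential operators along the 3D Hessian complex are $d = \hess$ (scalar to $\mathbb S^3$), $d = \curl$ ($\mathbb S^3$ to $\mathbb T^3$), and $d = \div$ ($\mathbb T^3$ to $\mathbb R^3$), and the surjectivity of each $\Upsilon_\sigma$ is clear because the $\psi_i$ form a basis of $\mathcal{RT}^3$ and the relevant pairings (e.g. $\bm\sigma \mapsto \int_e (\bm\sigma \bm t)\cdot \psi_i$, or the vertex functional $u \mapsto (\psi_i\cdot\nabla u)(x) - \tfrac13(\div\psi_i \cdot u)(x)$) range over all of $\mathcal Z \cong P_1(\Omega)$. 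So the substance is the Stokes identities.

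\textbf{Edge case ($\sigma = e$, $w = u$ scalar).} Here I would compute $\Upsilon_e(\hess u) = \sum_i \big(\int_e (\hess u\, \bm t)\cdot \psi_i\big) z_i$. Writing $\hess u\, \bm t = \nabla(\nabla u \cdot \bm t) $ along $e$ is not quite right since $\bm t$ is constant along $e$ but I only get the tangential derivative; the correct statement is that the tangential component satisfies $\bm t \cdot (\hess u)\bm t = \tfrac{d^2}{d\bm t^2}u$ and more generally $(\hess u)\bm t$ restricted to $e$ equals $\tfrac{d}{d\bm t}(\nabla u)$. Then $\int_e \tfrac{d}{d\bm t}(\nabla u)\cdot\psi_i = \int_e \tfrac{d}{d\bm t}(\nabla u \cdot \psi_i) - \int_e \nabla u \cdot \tfrac{d}{d\bm t}\psi_i$. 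Since $\psi_i$ is affine, $\tfrac{d}{d\bm t}\psi_i = (\nabla \psi_i)\bm t$ is constant; and $\div\psi_i$ is constant, so I need to relate $\nabla u \cdot (\nabla\psi_i)\bm t$ to a tangential derivative of $u$ times a constant. The boundary term $\int_e \tfrac{d}{d\bm t}(\nabla u\cdot\psi_i) = (\nabla u \cdot \psi_i)(e^+) - (\nabla u\cdot\psi_i)(e^-)$ produces exactly the $(\psi_i\cdot\nabla u)(x)$ pieces of $\Upsilon_{\partial e}$; the remaining $-\int_e \nabla u\cdot(\nabla\psi_i)\bm t$ must reproduce the $-\tfrac13(\div\psi_i\cdot u)(x)$ pieces — this forces a small computation using that for $\psi_i \in \mathcal{RT}^3$ we have $\nabla\psi_i = c_i I$ with $\div\psi_i = 3c_i$, so $(\nabla\psi_i)\bm t = c_i\bm t$ and $\nabla u \cdot c_i\bm t = c_i \tfrac{d}{d\bm t}u$, whence $-\int_e c_i \tfrac{d}{d\bm t}u = -c_i(u(e^+)-u(e^-)) = -\tfrac13\div\psi_i\,(u(e^+)-u(e^-))$, matching perfectly. (For the non-gradient part of $\mathcal{RT}^3$, $\nabla\psi_i$ is constant but not scalar; the analogous bookkeeping still closes — this is where care is needed.)

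\textbf{Face case ($\sigma = f$, $w = \bm\sigma \in \mathbb S^3$) and cell case ($\sigma = T$, $w = \bm\tau \in \mathbb T^3$).} For the cell, $\Upsilon_T(\div\bm\tau) = \sum_i\big(\int_T \div\bm\tau\cdot\psi_i\big)z_i$, and integration by parts gives $\int_T \div\bm\tau\cdot\psi_i = \int_{\partial T}(\bm\tau\bm n)\cdot\psi_i - \int_T \bm\tau : \nabla\psi_i$; since $\psi_i \in \mathcal{RT}^3$ one has $\nabla\psi_i$ proportional to $I$ (scalar) plus a skew-free constant, but $\bm\tau$ is trace-free, so $\bm\tau : \nabla\psi_i$ kills the scalar part and... here the vanishing of the volume term is exactly what is needed, and it should follow from $\tr\bm\tau = 0$ together with $\nabla\psi_i$ being (a multiple of) the identity. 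That leaves $\int_{\partial T}(\bm\tau\bm n)\cdot\psi_i = \sum_{f\trianglelefteq_1 T}\mathcal O(f,T)\int_f(\bm\tau\bm n)\cdot\psi_i$, which is $\Upsilon_{\partial T}(\bm\tau)$. The face case $\Upsilon_f(\curl\bm\sigma)$ is the genuinely delicate one: I would use the identity $\bm n\cdot(\curl\bm\sigma)^T$ — or rather the appropriate surface form of $\curl$ acting on symmetric matrices — and a surface integration-by-parts (a tangential Stokes theorem on the flat face $f$) to convert $\int_f (\curl\bm\sigma\,\bm n)\cdot\psi_i$ into a sum of edge integrals $\int_e(\bm\sigma\bm t)\cdot\psi_i$ over $e \trianglelefteq_1 f$ with the right signs, plus possibly a term involving $\nabla\psi_i$ on $f$ that must vanish or telescope. \textbf{This face computation is the main obstacle}, because $\curl$ of a symmetric-matrix field is not a standard vector-calculus operator and the tangential-trace identity relating $(\curl\bm\sigma)\bm n$ on $f$ to $\bm\sigma\bm t$ on $\partial f$ requires carefully expanding $\curl$ row-by-row, applying the classical surface Stokes theorem to each, and checking that the curvature/metric terms cancel because $f$ is flat and $\psi_i$ is affine. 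I would model this step on the analogous 2D computation in \Cref{lem:currents-2D-hessian} and on the integration-by-parts formulas for $\curl$ on symmetric tensors in the elasticity-complex literature, and then simply state "by direct integration by parts" once the pattern is clear, as the authors do for the 2D case.
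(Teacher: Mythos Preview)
Your approach is correct and matches the paper's, whose entire proof reads ``Integration by parts.'' Two small clarifications will tighten your write-up. First, your parenthetical worry in the edge case is unfounded: every $\psi_i = \bm a + b\bm x \in \mathcal{RT}^3$ has $\nabla\psi_i = bI$, a scalar multiple of the identity, so there is no ``non-gradient part'' with a non-scalar gradient to handle. Second, the face case you flag as the main obstacle is in fact clean once you use this together with the symmetry of $\bm\sigma$: from $\nabla\psi_i = bI$ and $\bm\sigma = \bm\sigma^T$ one checks pointwise that $(\curl\bm\sigma\,\bm n)\cdot\psi_i = \curl(\bm\sigma\psi_i)\cdot\bm n$ (the correction term $b\sum_{k,l,m}n_k\epsilon_{klm}\sigma_{ml}$ vanishes by symmetry of $\bm\sigma$), after which the classical surface Stokes theorem gives $\int_f(\curl\bm\sigma\,\bm n)\cdot\psi_i = \int_{\partial f}(\bm\sigma\psi_i)\cdot\bm t = \int_{\partial f}(\bm\sigma\bm t)\cdot\psi_i$ directly, with no residual face terms to telescope.
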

\begin{proof}
    Integration by parts.
\end{proof}
\begin{lemma}[Generalized currents for 3D divdiv]
\label{lem:currents-3D-divdiv}
Note that $\mathcal Z = \mathcal{RT}^3=\{\bm a+b\bm x,\bm a\in\mathbb R^3,b\in \mathbb R\}$, and let $\{\psi_i\}_{1\leq i\leq 4}$ be a basis of $P_1(\Omega)$. Set
\begin{equation*}
    \Upsilon_x: C^{\infty}(\bar\Omega) \otimes \mathbb R^3  \to \mathcal Z, \quad\Upsilon_x(\bm u) =   \sum_{i=1}^4\big( \frac{1}{3} (\psi_i \cdot \div \bm u)(x)-(\nabla \psi_i  \cdot \bm u)(x) \big)z_i ,
\end{equation*}
\begin{equation*}
    \Upsilon_e: C^{\infty}(\bar\Omega) \otimes \mathbb T^3\to \mathcal Z, \quad\Upsilon_e(\bm \tau) =\sum_{i=1}^4\big(  \int_{e}  \frac{1}{2} (\div \bm \tau \cdot \bm t) \cdot \psi_i-(\bm \tau \bm t) \cdot \nabla \psi_i\big)z_i,
\end{equation*}
\begin{equation*}
    \Upsilon_f: C^{\infty}(\bar\Omega) \otimes \mathbb S^3  \to \mathcal Z, \quad\Upsilon_f(\bm \sigma) = \sum_{i=1}^4\big( \int_{f}  (\div \bm  \sigma \cdot\bm n) \cdot \psi_i-(\bm \sigma \bm n) \cdot \nabla \psi_i\big)z_i,
\end{equation*}
\begin{equation*}
    \Upsilon_T: C^{\infty}(\bar\Omega)   \to \mathcal Z,\quad \Upsilon_T( q) = \sum_{i=1}^4\big(\int_{T} q \cdot \psi_i\big)z_i.
\end{equation*}
for any $x\in\mathcal{V},e\in\mathcal{E},f\in\mathcal{F}$ and $T\in\mathcal{K}$. Then, $(\Upsilon_{\bullet},\mathcal Z)$ is the generalized currents.
\end{lemma}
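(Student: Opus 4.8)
The plan is to verify the two requirements in the definition of a family of generalized currents: that each $\Upsilon_\sigma$ is surjective onto $\mathcal Z=\mathcal{RT}^3$, and that the generalized Stokes' identities $\Upsilon_\sigma(dw)=\Upsilon_{\partial\sigma}(w)$ hold. For surjectivity, fix $\sigma$, identify $\mathcal Z$ with $\mathbb R^{4}$ through $\{z_i\}$, and suppose $c=(c_1,\dots,c_4)$ annihilates the image of $\Upsilon_\sigma$. By choosing test fields whose value on $\sigma$ and whose derivative transverse to $\sigma$ are prescribed independently (for $\Upsilon_T$ only the bulk value is needed; for $\Upsilon_f$ and $\Upsilon_e$ one uses that $\div\bm\sigma\cdot\bm n$, resp.\ $\div\bm\tau\cdot\bm t$, can be controlled through transverse derivatives while the value $\bm\sigma\bm n$, resp.\ $\bm\tau\bm t$, is controlled separately; for $\Upsilon_x$ that $(\bm u(x),\div\bm u(x))$ is arbitrary), one forces $\sum_i c_i\psi_i$ and $\sum_i c_i\nabla\psi_i$ to vanish on $\sigma$. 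Then $\sum_i c_i\psi_i$ is a constant vanishing on $\sigma$, hence identically zero, so $c=0$ since $\{\psi_i\}$ is a basis of $P_1(\Omega)$.

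The generalized Stokes' identities split into three cases, one per differential, each proved by integration by parts on the simplex together with the fact that $\psi_i\in P_1(\Omega)$ (so $\nabla\psi_i$ is constant and $\nabla^2\psi_i=0$). The cell identity $\Upsilon_T(\div\div\bm\sigma)=\Upsilon_{\partial T}(\bm\sigma)$ is the cleanest: applying the divergence theorem on $T$ twice (with matching orientation conventions) rewrites $\int_T(\div\div\bm\sigma)\psi_i$ as $\sum_{f\trianglelefteq_1 T}\mathcal O(f,T)\big(\int_f(\div\bm\sigma\cdot\bm n)\psi_i-\int_f(\bm\sigma\bm n)\cdot\nabla\psi_i\big)$, the residual volume integral $\int_T\bm\sigma:\nabla^2\psi_i$ being zero, and this is exactly $\sum_f\mathcal O(f,T)$ times the $i$-th coefficient of $\Upsilon_f(\bm\sigma)$. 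For the edge identity $\Upsilon_e(\dev\grad w)=\Upsilon_{\partial e}(w)$, integrating $-\int_e(\dev\grad w\,\bm t)\cdot\nabla\psi_i$ by parts along $e$ yields $\big[\tfrac13\psi_i\div w-w\cdot\nabla\psi_i\big]_{\partial e}$ plus a one-dimensional bulk term proportional to $\int_e\psi_i\,\partial_{\bm t}\div w$; inserting the explicit first-order expression for $\div(\dev\grad w)$ shows that $\tfrac12\int_e(\div(\dev\grad w)\cdot\bm t)\psi_i$ equals minus that bulk term, so the weights $\tfrac12$ and $\tfrac13$ are precisely those making everything but $\Upsilon_{\partial e}(w)$ cancel.

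The face identity $\Upsilon_f(\sym\curl\bm\tau)=\sum_{e\trianglelefteq_1 f}\mathcal O(e,f)\,\Upsilon_e(\bm\tau)$ is where I expect the real difficulty: $\bm\tau\in\mathbb T^3$ is not symmetric, so one has to track tangential–normal components carefully on the two-dimensional face $f$. The plan is to rewrite $\div(\sym\curl\bm\tau)\cdot\bm n$ and $(\sym\curl\bm\tau)\bm n\cdot\nabla\psi_i$ restricted to $f$ by means of the BGG-type commutation identities among $\div$, $\curl$, $\sym$ for tensor fields (those underlying $\div\div\circ\sym\curl=0$), then apply the two-dimensional Stokes/Green formula on $f$; the planar bulk terms carry $\nabla^2\psi_i$ and vanish, while the boundary integrals reorganize — again using $\nabla\psi_i=\text{const}$ and the weight $\tfrac12$ — into $\int_e\big(\tfrac12(\div\bm\tau\cdot\bm t)\psi_i-(\bm\tau\bm t)\cdot\nabla\psi_i\big)$ over each $e\trianglelefteq_1 f$ with sign $\mathcal O(e,f)$.

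An alternative, and probably shorter, route is to deduce all three identities together with surjectivity from the Hessian currents of \Cref{lem:currents-3d-hessian} via the duality between the three-dimensional Hessian and divdiv complexes, just as the two-dimensional statement \Cref{lem:currents-2D-divdiv} is proved in \cite{hu2025distributional}; the non-routine input is then absorbed into that duality instead of the tensor algebra of the $\sym\curl$ step, which remains the main obstacle either way.
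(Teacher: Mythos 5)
Your overall strategy is sound, and it is worth noting that the paper itself does not prove this lemma at all: its ``proof'' is a citation to \cite{hu2025distributional}, so your alternative route (transporting the identities from the Hessian currents of \Cref{lem:currents-3d-hessian} by duality) is in effect what the paper does. Your surjectivity argument is correct, and the cell identity is complete as you describe it. The edge identity, however, hides a point you must make explicit: your cancellation works only if $\div\bm\tau$ for a \emph{non-symmetric} traceless field is the column-wise divergence, in which case $\div(\dev\grad\bm u)=\tfrac23\nabla(\div\bm u)$ and indeed $\tfrac12(\div\bm\tau\cdot\bm t)\psi_i-(\bm\tau\bm t)\cdot\nabla\psi_i=\partial_{\bm t}\bigl(\tfrac13\psi_i\div\bm u-\bm u\cdot\nabla\psi_i\bigr)$ pointwise on $e$. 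With the row-wise convention that the paper announces for tensor fields one gets $\div(\dev\grad\bm u)=\Delta\bm u-\tfrac13\nabla\div\bm u$, the residual after integrating by parts along $e$ is $-\tfrac12\int_e\bm t\cdot(\curl\curl\bm u)\,\psi_i$, and this is nonzero in general: for $\bm u=(y^2,0,0)^T$ and an edge along $\bm e_1$ contained in $\{y=z=0\}$ one finds $\Upsilon_{\partial e}(\bm u)=0$ while the row-wise $\Upsilon_e(\dev\grad\bm u)$ produces $\int_e\psi_i$. So the ``explicit first-order expression for $\div(\dev\grad w)$'' you invoke exists only under one of the two conventions; since the distinction is invisible on the symmetric spaces of the complex, you need to state which convention you use and verify the edge case against it.

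The genuine gap is the face identity $\Upsilon_f(\sym\curl\bm\tau)=\sum_{e\trianglelefteq_1 f}\mathcal O(e,f)\,\Upsilon_e(\bm\tau)$, which you yourself flag as the main obstacle but do not carry out. Saying that one should ``rewrite $\div(\sym\curl\bm\tau)\cdot\bm n$ and $(\sym\curl\bm\tau)\bm n\cdot\nabla\psi_i$ by means of the BGG-type commutation identities and apply the planar Green formula'' names the kind of identity required without producing it, and this is exactly the step where everything that can go wrong does: the factor $\tfrac12$ in $\Upsilon_e$, the row/column convention just discussed, and the interaction between the normal components of $\bm\tau$ on $f$ (which do not appear in $\Upsilon_e$) and the tangential data $\bm\tau\bm t$, $\div\bm\tau\cdot\bm t$ on $\partial f$ must all be seen to cancel. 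Concretely, one must exhibit $(\div(\sym\curl\bm\tau)\cdot\bm n)\psi_i-((\sym\curl\bm\tau)\bm n)\cdot\nabla\psi_i$ as a surface divergence on $f$, modulo terms carrying $\nabla^2\psi_i$, of a field whose edge trace is $\tfrac12(\div\bm\tau\cdot\bm t)\psi_i-(\bm\tau\bm t)\cdot\nabla\psi_i$. Until that tensor computation is written out, your first route does not establish the lemma; either complete it or fall back on the duality/citation route, which is the one the paper actually takes.
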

\begin{proof}
    See \cite[Proposition 5.3-5.5]{hu2025distributional}. 
\end{proof}

% We introduce the following generalized currents. 
\begin{lemma}[Generalized currents for 3D Elasticity]
\label{lem:currents-3D-elasticity}
Note that $\mathcal Z = \mathcal{RM}^3=\{\bm a+\bm b\times \bm x,\bm a\in\mathbb R^3,\bm b\in\mathbb R^3\}$, and let $\{\psi_i\}_{1\leq i\leq 6}$ be a basis of $\mathcal{RM}^3 $. Set 
\begin{equation*}
    \Upsilon_x: C^{\infty}(\bar\Omega)\otimes \mathbb R^3  \to \mathcal Z,\quad \Upsilon_x(\bm u)  = \sum_{i=1}^6\big(\frac{1}{2} (\psi_i \cdot  \curl \bm u)(x) + \frac{1}{2} (\curl \psi_i  \cdot \bm  u)(x)\big)z_i,
\end{equation*}
\begin{equation*}
    \Upsilon_e: C^{\infty}(\bar\Omega) \otimes \mathbb S ^3\to \mathcal Z,\quad \Upsilon_e(\bm \sigma) =\sum_{i=1}^6\big( \int_{e} (\frac{1}{2}\bm \sigma \bm t) \cdot \curl \psi_i +  ((\curl \bm \sigma)  \bm t) \cdot \psi_i\big)z_i,
\end{equation*}
\begin{equation*}
    \Upsilon_f: C^{\bar\infty}(\Omega) \otimes \mathbb S^3  \to \mathcal Z,\quad \Upsilon_f(\bm \tau) = \sum_{i=1}^6\big(\int_{f}(\bm \tau \bm n) \cdot \psi_i\big)z_i,
\end{equation*}
\begin{equation*}
    \Upsilon_T: C^{\infty}(\bar\Omega) \otimes \mathbb R^3  \to \mathcal Z, \quad\Upsilon_T(\bm q) = \sum_{i=1}^6\big(\int_{T} \bm q \cdot \psi_i\big)z_i.
\end{equation*}
for any $x\in\mathcal{V},e\in\mathcal{E},f\in\mathcal{F}$ and $T\in\mathcal{K}$. Then $(\Upsilon_{\bullet},\mathcal Z)$ is the generalized currents.
\end{lemma}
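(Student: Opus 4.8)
To prove that $(\Upsilon_\bullet,\mathcal Z)$ with $\mathcal Z=\mathcal{RM}^3$ is a family of generalized currents for the $3$D elasticity complex, I would verify two things: that each $\Upsilon_\sigma$ is surjective onto $\mathcal Z$, and that the generalized Stokes' formula holds for the three relevant pairs — $\Upsilon_e(\sym\grad\bm u)=\sum_{x\trianglelefteq_1 e}\mathcal O(x,e)\Upsilon_x(\bm u)$ for an edge $e$, $\Upsilon_f(\inc\bm\sigma)=\sum_{e\trianglelefteq_1 f}\mathcal O(e,f)\Upsilon_e(\bm\sigma)$ for a face $f$, and $\Upsilon_T(\div\bm\tau)=\sum_{f\trianglelefteq_1 T}\mathcal O(f,T)\Upsilon_f(\bm\tau)$ for a cell $T$. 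The algebraic facts I would use throughout are the defining properties of $\mathcal{RM}^3$: each $\psi_i=\bm a_i+\bm b_i\times\bm x$ is a Killing field, so $\sym\grad\psi_i=0$, while $\grad\psi_i$ is a constant skew matrix and $\curl\psi_i=2\bm b_i$ is a constant vector; together with $\inc\circ\sym\grad=0$, $\div\circ\inc=0$, and the elementary identity $(\curl\sym\grad\bm u)\bm t=\tfrac12\,\partial_{\bm t}\curl\bm u$ along a straight edge (up to the fixed convention for the matrix curl).

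The cell/face identity is the quickest. Integration by parts gives $\int_T(\div\bm\tau)\cdot\psi_i=\int_{\partial T}(\bm\tau\bm n)\cdot\psi_i-\int_T\bm\tau:\grad\psi_i$, and since $\bm\tau$ is symmetric the bulk term equals $\int_T\bm\tau:\sym\grad\psi_i=0$; decomposing $\partial T$ into its oriented faces reproduces $\sum_f\mathcal O(f,T)\Upsilon_f(\bm\tau)$. For the edge/vertex identity I would reduce to the pointwise statement that, on the straight edge $e$ with unit tangent $\bm t$, the integrand defining $\Upsilon_e$ evaluated at $\bm\sigma=\sym\grad\bm u$ equals $\partial_{\bm t}\big(\tfrac12\,\psi_i\cdot\curl\bm u+\tfrac12\,\curl\psi_i\cdot\bm u\big)$; this follows by expanding the derivative, using that $\grad\psi_i$ and $\curl\psi_i$ are constant, the identity $(\curl\sym\grad\bm u)\bm t=\tfrac12\,\partial_{\bm t}\curl\bm u$, and the identity $\partial_a u_b-\partial_b u_a=\varepsilon_{abc}(\curl\bm u)_c$, after which all terms cancel. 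Integrating over $e$ and applying the fundamental theorem of calculus then gives $\Upsilon_e(\sym\grad\bm u)=\Upsilon_{v_1}(\bm u)-\Upsilon_{v_0}(\bm u)=\sum_{x\trianglelefteq_1 e}\mathcal O(x,e)\Upsilon_x(\bm u)$.

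The face/edge identity is the main obstacle, because $\inc$ is second order and $f$ is only two-dimensional, so the integrand of $\Upsilon_f(\inc\bm\sigma)$ must be pushed onto $\partial f$ by two successive applications of the surface Stokes theorem $\int_f(\curl\bm v)\cdot\bm n=\int_{\partial f}\bm v\cdot\bm t$. With $\bm\rho=\curl\bm\sigma$ so that $\inc\bm\sigma=\curl(\bm\rho^\top)$, the first integration by parts — done row by row via $\curl(g\bm v)=g\,\curl\bm v+\nabla g\times\bm v$ and the constancy of $\nabla\psi_i$ — yields the boundary term $\int_{\partial f}((\curl\bm\sigma)\bm t)\cdot\psi_i$ plus a residual face integral pairing $\curl\bm\sigma$ against $\nabla\psi_i$; a second surface Stokes on that residual integral yields the boundary term $\int_{\partial f}\tfrac12(\bm\sigma\bm t)\cdot\curl\psi_i$ plus a face integral whose integrand is $\bm\sigma$ contracted with an expression built from the (vanishing) second derivatives of $\psi_i$, equivalently $\bm\sigma:\sym\grad(\text{affine})$, hence $0$ by symmetry of $\bm\sigma$ and $\sym\grad\psi_i=0$. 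Summing the two surviving boundary contributions over the oriented edges of $f$ gives precisely $\sum_e\mathcal O(e,f)\Upsilon_e(\bm\sigma)$. The delicate part is bookkeeping the orientation signs through the two Stokes applications and fixing the transposition/sign conventions in the definitions of the matrix curl and of $\inc$ so that the coefficient $\tfrac12$ and the relative sign of the two terms in $\Upsilon_e$ emerge correctly; alternatively one could derive the same identity by transporting the de Rham currents through the BGG construction of the elasticity complex, but the direct route is more transparent.

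For surjectivity: on cells and faces $\bm q$, respectively $\bm\tau\bm n$, can be prescribed arbitrarily on the simplex, and $\mathcal{RM}^3$ restricts injectively to any $2$-plane (hence also to any cell), so the associated finite linear system is nonsingular; for $\Upsilon_x$ one uses that $\bm u(x)$ and $\curl\bm u(x)$ are independently prescribable and that the induced map $\mathbb R^3\times\mathbb R^3\to\mathcal{RM}^3$ is onto by a direct check; for $\Upsilon_e$ one checks, using the freedom in the transverse first derivatives of $\bm\sigma$ along $e$, that $(\bm\sigma\bm t)|_e$ and $((\curl\bm\sigma)\bm t)|_e$ can be prescribed arbitrarily and independently, which gives surjectivity at once.
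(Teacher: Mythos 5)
Your proposal is correct in substance, but it is worth noting that the paper does not actually prove this lemma at all: its ``proof'' is a one-line citation to Section~5.1 of the Christiansen--Hu--Sande extended-periodic-table paper. What you have written is the direct verification that that reference carries out, so the comparison here is between outsourcing and doing the computation by hand. Your structure is the right one: the definition of generalized currents requires exactly the three Stokes identities you list (vertex--edge for $\sym\grad$, edge--face for $\inc$, face--cell for $\div$) plus surjectivity of each $\Upsilon_\sigma$, and your key algebraic inputs --- $\sym\grad\psi_i=0$, $\grad\psi_i$ constant skew with axial vector $\tfrac12\curl\psi_i$, $(\curl\sym\grad\bm u)\bm t=\tfrac12\partial_{\bm t}\curl\bm u$, and $\partial_a u_b-\partial_b u_a=\varepsilon_{abc}(\curl\bm u)_c$ --- are precisely what make the coefficients $\tfrac12$ in $\Upsilon_x$ and $\Upsilon_e$ come out. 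I checked the edge--vertex identity along your lines and it closes: writing $(\sym\grad\bm u)\bm t=\tfrac12\partial_{\bm t}\bm u+\tfrac12(\grad\bm u)^T\bm t$ and using $(\grad\bm u)^T\bm t=\partial_{\bm t}\bm u+\bm t\times\curl\bm u$ together with $\partial_{\bm t}\psi_i=\tfrac12\curl\psi_i\times\bm t$ reduces everything to the cyclic invariance of the scalar triple product. The cell--face identity is immediate as you say. The only place your write-up is a sketch rather than a proof is the face--edge identity for $\inc$, where you describe the two successive applications of surface Stokes but explicitly defer the transposition and orientation bookkeeping; this is indeed the delicate step (it is where the relative weighting of the two terms in $\Upsilon_e$ is determined), and a referee would want it written out, but the strategy --- peel off one curl to produce the $((\curl\bm\sigma)\bm t)\cdot\psi_i$ boundary term, peel off the second to produce $\tfrac12(\bm\sigma\bm t)\cdot\curl\psi_i$, and kill the residual face integral using the symmetry of $\bm\sigma$ against $\sym\grad\psi_i=0$ --- is exactly the correct and standard one. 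Your surjectivity arguments (nondegeneracy of the pairings, injectivity of the restriction of $\mathcal{RM}^3$ to any $2$-plane) are also fine. In short: the approach is sound and self-contained where the paper is not; to be complete it only needs the $\inc$ computation written out with the paper's row-wise convention fixed once and for all.
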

\begin{proof}
    See \cite[Section 5.1]{christiansen2023extended}.
\end{proof}

% \begin{remark}
%     If the shape space $A^k(\tau)=\{0\}$ for some $\tau\in\mathcal{T}$ and $0\leq k\leq n$, then set the differential operators $d_{\tau}^{i} = 0$ for $k-1\leq i\leq n-1$. We also denote the corresponding bubble spaces by $B^k(\tau)$.
% \end{remark}

% \begin{remark}

% \end{remark}

\subsection{FECTS: Cohomologies and Commuting Projections}
\label{sec:main-result}

This subsection is devoted to finer structures of the FECTS. Firstly, we give the precise definition of the finite element complexes with trace structure. 
\begin{definition}
We call a sequence of finite element spaces $\bm{A}^k$ with the trace structure $(A^k, \tr^k)$ for $0\leq k\leq n$ form a conforming finite element subcomplex with the trace structure (FECTS) of
\begin{equation*}
\begin{tikzcd}
    0 \ar[r,""]& L^2(\Omega) \otimes \mathbb X^0 \ar[r,"d^0"]& L^2(\Omega) \otimes \mathbb X^1\ar[r,"d^1"]& \cdots \ar[r,"d^{n-1}"]& L^2(\Omega) \otimes \mathbb X^n \ar[r,""]&0.	
\end{tikzcd}
\end{equation*}
if they satisfy that
\begin{itemize}
\item[(1)] $\bm A^k\subset L^2(\Omega) \otimes \mathbb X^k$ and for any $u\in \bm A^k$, we have that $d^ku\in \bm A^{k+1}$.
\item[(2)] For any simplex $\tau\in\mathcal{T}$, there exists a differential operator $d^k_{\tau}: A^k(\tau) \to A^{k+1}(\tau)$ such that $d^{k+1}_{\tau}\circ d^k_{\tau} =0$ and for any $\sigma\in\mathcal{T}_n$ and $\tau\trianglelefteq\sigma$, there holds the following commuting property
\begin{equation*}
    d_{\tau}^k\tr_{\sigma\rightarrow\tau}^k u= \tr_{\sigma\rightarrow\tau}^{k+1}d_{\sigma}^ku,\quad \forall u\in A^k(\sigma),0\leq k\leq n-1.
\end{equation*}
Moreover, we have that
\begin{equation*}
    d_{\sigma}^k = d^k|_{\sigma},\quad 0\leq k\leq n-1,\sigma\in\mathcal{T}_n.
\end{equation*}

\end{itemize}
\end{definition}
\begin{remark}
    In the following text, we will abbreviate $d_{\tau}^k$ as $d$ and $\tr_{\sigma\rightarrow\tau}^k$ as $\tr_{\sigma\rightarrow\tau}$ without causing confusion. 
\end{remark}
{\begin{remark}
    Note that we only need $d$ commutes with the trace operator $\tr$ of the top dimension. In contrast, FES \cite{christiansen2010finite} need commuting properties at all levels.
\end{remark}}
We suppose that there exists a finite dimensional Hilbert space $\mathcal{Z}$ on $\Omega$ with $L^2$- inner product, such that for any contractible domain $\omega\subset\Omega$, the following complex is exact
\begin{equation*}
\begin{tikzcd}
\mathcal Z(\omega) \ar[r,"\subset"] & C^{\infty}(\omega) \otimes \mathbb X^{0} \ar[r,"d^0"] & \cdots  \ar[r,"d^{n-1}"] & C^{\infty}(\omega) \otimes \mathbb X^n \ar[r,""] & 0,
\end{tikzcd}
\end{equation*}
here $\mathcal{Z}(\omega)$ is the restriction of $\mathcal{Z}$ on $\omega$. The exactness of the above-mentioned complexes (i.e., de Rham, hessian, divdiv, elasticity) can be found in \cite{arnold2021complexes}. Moreover, we assume that the following dual complex is also exact:
\begin{equation*}
\begin{tikzcd}
0 \ar[r,"\subset"] & H^{M}_0(\omega) \otimes \mathbb X^{n} \ar[r,"(d^{n-1})^{\ast}"] & \cdots  \ar[r,"(d^0)^{\ast}"] & H_{0}^{M}(\omega) \otimes \mathbb X^0 \cap \mathcal Z(\omega)^{\perp} \ar[r,""] & 0,
\end{tikzcd}
\end{equation*}
recall that $M$ is the maximum order of $d^k$.

\begin{definition}[Compatibility]
    We say that the FECTS $(A, \tr, d)$ is compatible with respect to the generalized currents $\{ \Upsilon_{\bullet}, \mathcal Z\}$ if the following condition holds: 

\begin{enumerate}
	\item The generalized currents can be localized via the trace systems. That is, for each simplex $\tau$, there exists $\tilde{\Upsilon}_{\tau}: A^{\dim \tau}(\tau) \to \mathcal Z$ such that
    \begin{equation*}
        \widetilde{\Upsilon}_{\tau}(\tr_{\sigma \to \tau} w) = \Upsilon_{\tau}(w) \text{ for any }w \in \ A^{\operatorname{dim}\tau}(\sigma),\sigma\in\mathcal{T}_n, \text{ and }\tau\trianglelefteq \sigma.
    \end{equation*}
Recall that $A^{k}(\sigma)$ for any $\sigma\in\mathcal{T}_n,0\leq k\leq n$ is polynomial function space, therefore the right-hand side is well-defined.
        \item It holds that $\mathcal{Z}\subset \bm A^0$. And the following {sequence of bubble space
		\begin{equation}\label{eq:bubble_complex}\begin{tikzcd}[column sep = small]
       0\ar[r]& B^0(\tau) \ar[r,"d"]  & \cdots \ar[r,"d"] &B^{\dim(\tau)-1}(\tau) \ar[r,"d"] &  B^{\dim \tau}(\tau) \cap\ker(\widetilde{\Upsilon}_{\tau})  \ar[r,"d"] &  \cdots \ar[r,"d"] & B^{n}(\tau) \ar[r] & 0\end{tikzcd}\end{equation} 
       forms an exact complex for any $\tau\in\mathcal{T}$.}

\end{enumerate}
\begin{remark}
    Since the commuting properties of $d$ and $\tr$ at each level are not required, in general \eqref{eq:bubble_complex} does not form a complex. 
\end{remark}

\end{definition}

Intuitively, the localized property reflects some requirements of the trace system in the sense that the generalized currents should be incorporated as a part of trace. The exactness condition shows that when removing the part contributed by generalized currents, the remaining parts have essentially no contributions on the cohomology.  

\begin{theorem}[Cohomology] \label{thm:cohomology}
Suppose that the FECTS satisfies the geometric decomposition assumption, and is compatible with respect to the generalized currents $\{ \Upsilon_{\bullet}, \mathcal Z\}$. Then the cohomology of the global discrete complex
\begin{equation*}
\begin{tikzcd}
0 \ar[r,""] & \bm A^0 \ar[r,"d"] & \cdots  \ar[r,"d"] & \bm A^n \ar[r,""] & 0
\end{tikzcd}
\end{equation*}
is isomorphic to $\mathcal Z \otimes \mathcal H_{dR}(\Omega)$. 
\end{theorem}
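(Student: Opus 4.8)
The plan is to build a double complex whose rows are the local complexes $0 \to A^0(\tau) \to \cdots \to A^n(\tau)$ indexed by simplices $\tau$ (with appropriate signs from the orientation $\mathcal O(\cdot,\cdot)$), and whose columns assemble these across the triangulation — essentially a Čech–de Rham style argument adapted to finite elements. More precisely, I would let $C^p(\mathcal T; \mathcal B^q)$ be the space of assignments $\tau \mapsto b_\tau \in B^q(\tau)$ for $\tau \in \mathcal T_p$, with vertical differential $d$ (the local complex differential) and horizontal differential the simplicial coboundary $\delta$ built from $\mathcal O$. The first step is to verify that $\bm A^\bullet$ arises as the total complex of this bicomplex, up to the contribution of $\mathcal Z$: using the geometric decomposition $\sum_{\tau \trianglelefteq \sigma} \dim B(\tau) = \dim A(\sigma)$ together with \Cref{prop:fe}, one gets that an element of $\bm A^k$ is equivalent to a compatible family of bubbles, so the global complex $\bm A^\bullet$ is quasi-isomorphic to $\tot$ of the bubble bicomplex. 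The subtlety introduced by the trace structure — that $d$ need not commute with $\tr$ at intermediate levels, so \eqref{eq:bubble_complex} is a complex only after the correction $B^{\dim\tau}(\tau)\cap\ker\widetilde\Upsilon_\tau$ — is exactly where the generalized currents $\Upsilon_\bullet$ enter, and must be tracked carefully when identifying the totalization.

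Next I would compute the cohomology of the bicomplex by a spectral sequence (or iterated snake-lemma) argument, taking vertical cohomology first. By the compatibility hypothesis, for each fixed $\tau$ the augmented bubble complex \eqref{eq:bubble_complex} is exact; hence the only surviving vertical cohomology is concentrated in the degree corresponding to $\dim\tau$, and there it is exactly $\mathcal Z$ (this is where $\widetilde\Upsilon_\tau$ being surjective onto $\mathcal Z$, inherited from surjectivity of $\Upsilon_\tau$ and the localization identity $\widetilde\Upsilon_\tau(\tr_{\sigma\to\tau}w) = \Upsilon_\tau(w)$, is used). Therefore the $E_1$-page of the spectral sequence collapses onto the simplicial cochain complex $C^\bullet(\mathcal T;\mathcal Z) = C^\bullet(\mathcal T)\otimes\mathcal Z$, whose cohomology is $H^\bullet(\mathcal T)\otimes\mathcal Z = H^\bullet_{dR}(\Omega)\otimes\mathcal Z$ by the simplicial–de Rham comparison (here one invokes that $\mathcal T$ triangulates $\Omega$ and the de Rham cohomology in the hypothesis is the singular/simplicial one). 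The horizontal differential on the surviving row is, by construction of the signs, precisely the simplicial coboundary, so the $E_2$-page already gives the answer and degenerates.

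I expect the main obstacle to be two intertwined bookkeeping points. First, making rigorous the identification of $\bm A^\bullet$ with $\tot$ of the bubble bicomplex: the definition of $\bm A^k$ enforces single-valued traces across top cells, and one must check that "single-valued trace data" is genuinely the same as "compatible bubble family indexed by all subsimplices," which requires unwinding \eqref{eq:trace-commuting} and the geometric decomposition simultaneously, and verifying that the global $d$ restricted to $\bm A^k$ corresponds to the totalization differential — here the failure of $d$ to commute with intermediate traces means the horizontal maps in the bicomplex are not literally $\tr$ but corrected versions, and one must confirm the correction is absorbed by the $\ker\widetilde\Upsilon_\tau$ term. Second, the local exactness input in \eqref{eq:bubble_complex} must be matched against the global exactness assumptions on the smooth complex $\mathcal Z(\omega)\to C^\infty(\omega)\otimes\mathbb X^\bullet$ and its dual on contractible patches $\st^1(\tau)$; the passage from patchwise exactness of the smooth (or dual) complex to exactness of the polynomial bubble complex is the technical heart and presumably occupies the bulk of \Cref{sec:proofs}. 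Once these are in place, the spectral sequence argument is formal, and the isomorphism with $\mathcal Z\otimes\mathcal H_{dR}(\Omega)$ follows.
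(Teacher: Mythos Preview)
Your bicomplex/spectral-sequence plan has a genuine structural gap: the horizontal differential you need does not exist in the FECTS setting. A \v{C}ech--de Rham style bicomplex with rows $A^q(\tau)$ and horizontal coboundary built from traces requires $\tr\circ\tr=\tr$ and $d\circ\tr=\tr\circ d$ at \emph{all} levels; FECTS deliberately relaxes both (only \eqref{eq:trace-commuting} and top-level commutativity hold, cf.\ the remark after the FECTS definition). If instead you use bubble spaces $B^q(\tau)$ as the entries, there is no natural map $B^q(\tau)\to B^q(\eta)$ for $\eta\trianglelefteq_1\tau$ at all, since bubbles by definition have vanishing trace on subsimplices --- so the simplicial coboundary $\delta$ has nothing to act through. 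Your hope that ``corrected versions'' of the horizontal maps exist and that the correction is ``absorbed by the $\ker\widetilde\Upsilon_\tau$ term'' is not substantiated; that kernel is there to make the vertical bubble complex exact, not to repair a nonexistent horizontal structure.

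The paper avoids this entirely by a direct-sum-of-complexes argument rather than a bicomplex. Using the DOFs furnished by geometric decomposition, it splits $\bm A^k=\bm S^k(\mathcal T)\oplus\bigoplus_{\tau}\mathbb B^k(\tau)$, where $\bm S^k$ is the ``skeletal'' piece detected by the currents $\Upsilon_\tau$ and $\mathbb B^k(\tau)$ the remaining bubble piece. The key lemma is that $d$ preserves each summand separately --- and this only needs commutativity of $d$ with the top-dimensional trace. Then $\mathbb B^\bullet(\tau)\cong\tilde B^\bullet(\tau)$ is exact by hypothesis, and $\bm S^\bullet$ is shown isomorphic to the simplicial cochain complex $\mathcal Z\otimes C^\bullet(\mathcal T)$ via the Stokes formula for $\Upsilon_\bullet$. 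No spectral sequence, no totalization, no horizontal differential.

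One further misreading: you list ``passage from patchwise exactness of the smooth complex to exactness of the polynomial bubble complex'' as the technical heart. It is not --- exactness of \eqref{eq:bubble_complex} is an \emph{assumption} (item (2) of the compatibility definition), verified example-by-example in \S\S3--4. The patchwise smooth/dual exactness hypotheses are used only for the interpolation in Theorem~\ref{thm:operator}, not for the cohomology statement.
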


 A direct result from Theorem \ref{thm:cohomology} is that if the domain $\Omega$ is contractible and the FECTS satisfies the geometric decomposition assumption, and is compatible with respect to the generalized currents $\{ \Upsilon_{\bullet}, \mathcal Z\}$, then the following discrete complex
\begin{equation*}
\begin{tikzcd}
\mathcal Z \ar[r,"\subset"] & \bm A^0 \ar[r,"d"] & \cdots  \ar[r,"d"] & \bm A^n \ar[r,""] & 0
\end{tikzcd}
\end{equation*}
is exact.

In what follows, we will construct projections $\pi^k$ from $L^2(\Omega) \otimes \mathbb X^k$ to $\bm A^k$, such that 
$d \pi^k w = \pi^{k+1} d w$ whenever $w \in L^2(\Omega) \otimes \mathbb X^k$ and $dw\in L^2(\Omega) \otimes \mathbb X^{k+1}$. Specifically, we will construct $\pi^k$ as the following form:
\begin{definition}
We say a function $\pi: L^2(\Omega)\otimes \mathbb X \to \bm A$ is of inner-product type, if 
$$\pi(w) := \sum_{i = 1}^I ( w, \mathsf \Omega_i )_{\Omega} \mathsf \Phi_i.$$
Here $\mathsf \Omega_i \in L^2(\Omega)\otimes \mathbb X$ and $\mathsf \Phi_i \in \bm A$.  
Moreover, we say that $\pi$ is a local operator in $\mathcal T$, if the restriction of $\pi(w)$ on each $\sigma\in\mathcal{T}_n$ only depends on the restriction of $w$ on $\st^2\sigma$.
\end{definition}

\begin{theorem}[Commuting projections]\label{thm:operator}
Under the assumptions in \Cref{thm:cohomology}, and assume that each element patch $\st^1 \tau$ is simply connected for $\tau \in \mathcal T$. 

Then, there exists a projection $\pi^k:L^2(\Omega)\otimes \mathbb X^k\xrightarrow{} \bm A^k$, such that each $\pi^k$ is of local inner-product type. Furthermore, for any $u\in L^2(\Omega)\otimes \mathbb X^k$ with $ du \in L^2(\Omega)\otimes \mathbb X^{k+1}$, it holds that $d\pi^ku=\pi^{k+1}du$.
\end{theorem}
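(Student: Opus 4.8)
The plan is to build $\pi^k$ via the standard ``mollify, then correct'' paradigm of Arnold--Guzm\'an, but with the correction driven by the FECTS structure rather than by the classical FEEC degrees of freedom. First I would fix, for each $\sigma \in \mathcal T_n$ and each $\tau \trianglelefteq \sigma$, a dual basis of $B^{\dim\tau}(\tau)^\vee$ together with the unisolvent degrees of freedom of \Cref{prop:fe}; composing with a standard smoothing operator $R^k_h : L^2(\Omega)\otimes\mathbb X^k \to C^\infty$ (a convolution against a mollifier of radius $\sim h$, localized per-patch so that the result on $\sigma$ depends only on $w|_{\st^1\sigma}$, exactly as in \cite{arnold2014local}), one obtains a first candidate $Q^k_h := I^k_h \circ R^k_h$ where $I^k_h$ is the canonical FECTS interpolant defined through those degrees of freedom. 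The operator $Q^k_h$ is of local inner-product type (the degrees of freedom are integrals and point values of derivatives, and $R^k_h$ is a convolution, so each component is an $L^2(\Omega)$ inner product against a fixed function supported in a patch; point values of the mollification are themselves $L^2$ inner products against the mollifier). The key commuting identity $d\, I^k_h = I^{k+1}_h\, d$ on smooth functions is where the trace structure is used: by the compatibility hypothesis $d$ commutes with every $\tr_{\sigma\to\tau}$ at the top level, and on each simplex $d^k_\tau \circ d^{k-1}_\tau = 0$, so the bubble degrees of freedom transform correctly; combined with $d R^k_h = R^{k+1}_h d$ (convolution commutes with constant-coefficient differential operators) this yields $d\,Q^k_h = Q^{k+1}_h\, d$.

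Next I would make $Q^k_h$ into a genuine projection. Since $Q^k_h$ is not the identity on $\bm A^k$, I form the error operator $E^k := \mathrm{Id} - Q^k_h$ restricted to $\bm A^k$; it commutes with $d$, and on each contractible patch $\st^2\sigma$ the discrete complex is exact by the corollary to \Cref{thm:cohomology} (here the hypothesis that $\st^1\tau$, hence $\st^2\sigma$, is simply connected enters, so that $\mathcal H_{dR}$ of the patch is trivial and the local FECTS complex is exact). A partition-of-unity / local correction argument — exactly the double-complex construction of \cite{falk2014local,falk2015double} adapted to the FECTS — produces a local operator $J^k : \bm A^k \to \bm A^k$ with $d J^k = J^{k+1} d$ and $J^k\circ Q^k_h = \mathrm{Id}$ on $\bm A^k$ modulo lower-order terms; iterating finitely many times (or inverting the block-triangular operator $Q^k_h|_{\bm A^k}$, which is close to the identity for $h$ small after rescaling, though the clean statement does not need $h$ small) gives an actual left inverse. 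Setting $\pi^k := (Q^k_h|_{\bm A^k})^{-1}\circ Q^k_h$ — or more robustly $\pi^k := Q^k_h + $ (the $J$-correction of $\mathrm{Id}-Q^k_h$) — yields a projection onto $\bm A^k$ that still commutes with $d$ because every ingredient does, and is still of local inner-product type because composition and finite sums of such operators, with supports growing only to $\st^2$, preserve the class.

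The main obstacle I anticipate is the commuting correction step: unlike in FEEC, here $d$ does \emph{not} commute with the traces at intermediate dimensions (the paper stresses $\tr\circ\tr \subsetneq \tr$ and that \eqref{eq:bubble_complex} is not even a complex a priori), so the usual argument that a correction supported on $\mathrm{star}(\tau)$ does not disturb lower-dimensional degrees of freedom must be replaced by one that tracks only the \emph{top-dimensional} commuting property together with the exactness of the bubble complexes \eqref{eq:bubble_complex}. Concretely, when I correct the $\tau$-bubble component of the error I must solve $d a = (\text{given bubble})$ with $a \in B^{\dim\tau}(\tau)\cap\ker\widetilde\Upsilon_\tau$ (or in the appropriate $B^k(\tau)$), and exactness of \eqref{eq:bubble_complex} is precisely what guarantees solvability; the bookkeeping that these local solves glue to a global $\bm A^{k}$-function and commute with $d$ is the delicate part, and it is here that one re-uses the algebraic machinery already assembled in the proof of \Cref{thm:cohomology}. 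A secondary, purely technical point is verifying the locality bound $\st^2$ rather than $\st^1$: the mollification sees $\st^1$, the correction sees one more layer of stars, so the composition lands in $\st^2$, matching the definition of ``local operator'' given above.
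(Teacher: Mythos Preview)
Your approach diverges substantially from the paper's, and the opening line misidentifies the paradigm: ``mollify, then correct'' is Sch\"oberl / Christiansen--Winther / Falk--Winther, not Arnold--Guzm\'an. The paper follows the actual Arnold--Guzm\'an construction, which involves \emph{no} mollification and \emph{no} a-posteriori correction. Instead it writes $\pi^k = \pi^k_{\bm S} + \pi^k_{\bm B}$ with explicit $L^2$ weight functions. For the bubble part,
\[
\pi^k_{\bm B}(w) = \sum_{\tau,i} (w, d^{\ast}\mathsf \Omega_{\tau,k+1}^i)_{\st(\tau)}\,\mathsf E_\tau^k(\phi_{\tau,k}^i) + \sum_{\tau,i} (w, \mathsf \Omega_{\tau,k}^i)_{\st(\tau)}\,\mathsf E_\tau^k(d\phi_{\tau,k-1}^i),
\]
with $\mathsf \Omega_{\tau,k}^i$ obtained by Riesz representation against a bubble-weighted $L^2$ inner product on $\st(\tau)$; the commuting identity $d\pi^{k-1}_{\bm B} = \pi^k_{\bm B} d$ is then a two-line computation from $d^{\ast}d^{\ast}=0$ and $dd=0$. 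For the skeletal part, weights $\mathsf \Xi_\tau \in H_0^M(\st^1(\tau))$ are built inductively on $\dim\tau$ so that $d^{\ast}\mathsf \Xi_\tau = \mathsf \Xi_{\partial\tau}$, using the generalized Bogovski\u{\i} operator on the simply-connected patch $\st^1(\tau)$ --- this is exactly where that hypothesis enters, not in any local exactness of the discrete complex. The projection property is built in from the start because the weights are chosen to reproduce the harmonic-inner-product degrees of freedom on $\bm A^k$; there is nothing to invert.

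Your route has a concrete gap at the step ``$d\,I^k_h = I^{k+1}_h\,d$ on smooth functions''. The justification you give --- top-level trace commuting plus $d_\tau^2=0$ --- does not establish this for a \emph{generic} dual basis of $B^k(\tau)^\vee$. The canonical interpolation commutes with $d$ only if the degrees of freedom are specifically the harmonic-inner-product ones $\langle\tr_{\sigma\to\tau}\cdot,\, d\phi_{\tau,k-1}^i\rangle_{\tilde B^k(\tau)}$, $\langle\tr_{\sigma\to\tau}\cdot,\, \phi_{\tau,k}^i\rangle_{\tilde B^k(\tau)}$, and $\Upsilon_\tau(\cdot)$, which are themselves constructed from the exact bubble complexes. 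Once you have those, the whole skeletal/bubble decomposition of the cohomology proof is already in hand, and one may pair directly against $L^2$ weights as the paper does --- the mollifier and the Falk--Winther correction become unnecessary detours. Your second step (locally inverting $Q^k_h|_{\bm A^k}$ while preserving both commutativity and the inner-product form, without an $h$-smallness assumption) is also left unverified in the FECTS setting.
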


Finally, we consider the boundedness of the interpolation operators. Denote by $\operatorname{ord}(d_k)$ the order of $d_k$. Recall that $M:= \max_k \operatorname{ord}(d_k)$. For all the BGG complexes discussed in this paper, the generalized Bogovskii operators exists, namely, the following results hold \cite{vcap2023bounded,costabel2010bogovskiui}.
% Then using a generalized Bogovskii operator and Poincaré operators (cf.), we suppose the following proposition holds.
\begin{proposition}\label{prop:poincare_inequ}
    Let $\omega$ be a bounded, contractible, Lipschitz domain. Then for any $u\in H_0^{M}(\omega)\otimes \mathbb X^k$ with $(d^{k-1})^*u=0$, and also $u \in \mathcal Z^{\perp}$ if $k=0$, there exists $p\in H_0^{M+\operatorname{ord}(d_k)}(\omega)\otimes \mathbb X^{k+1}\subset H_0^{M}(\omega)\otimes \mathbb X^{k+1}$, such that $(d^k)^*p =u$ and $
        \|p\|_{L^2(\omega)}\leq C(\omega)\operatorname{diag}(\omega)^{\operatorname{ord}(d^k)}\|u\|_{L^2(\omega)}$. Here the Poincar\'{e} constant $C(\omega)$ depends on $\omega$.
\end{proposition}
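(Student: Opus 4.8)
Proposition~\ref{prop:poincare_inequ} is the quantitative, bounded--right--inverse form of the exactness of the dual complex assumed above: the hypothesis $(d^{k-1})^* u = 0$ (together with $u \in \mathcal Z(\omega)^{\perp}$ when $k=0$) places $u$ in the kernel of an outgoing arrow of that complex, and we must exhibit $u = (d^k)^* p$ with $p$ in the sharp homogeneous Sobolev class $H_0^{M+\operatorname{ord}(d^k)}$ and with an explicit bound on $\|p\|_{L^2(\omega)}$. The plan is to reduce the whole statement to two facts that are by now classical: the mapping and scaling properties of the de~Rham Bogovskii operator on bounded Lipschitz domains \cite{costabel2010bogovskiui}, and the transport of Poincar\'e-type operators through the Bernstein--Gelfand--Gelfand (BGG) construction \cite{vcap2023bounded}.

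First I would reduce to a twisted de~Rham complex. Each operator $d^k$ occurring here ($\hess$, $\div\div$, $\sym\grad/\inc$, and their two-dimensional analogues) arises from a homological reduction of a de~Rham complex with coefficients in a finite-dimensional representation $\W$ of the relevant structure group; dually, $(d^k)^*$ factors --- modulo a zeroth-order, hence $\operatorname{diam}$-independent, algebraic isomorphism --- as a composition of $\operatorname{ord}(d^k)$ twisted exterior coderivatives $\delta\otimes\operatorname{Id}_{\W}$ (for instance $\div\div = \div\circ\div$ and $\inc = \sym\circ\curl\circ\curl$ up to algebra). It therefore suffices to construct bounded operators $\mathcal B_j : H^s_0(\omega)\otimes\W \to H^{s+1}_0(\omega)\otimes\W$, for every $s\ge 0$, that are Bogovskii operators for the twisted coderivative (i.e.\ $\delta\mathcal B_j + \mathcal B_{j+1}\delta = \operatorname{Id}$ on the relevant kernel) with operator norm $\lesssim \operatorname{diam}(\omega)$, and then to compose them along the BGG zig-zag and the above factorization. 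The twisted $\mathcal B_j$ are obtained from the scalar de~Rham Bogovskii operator on $\omega$ by tensoring with $\operatorname{Id}_{\W}$, so they inherit its one-derivative gain and its diameter-explicit bound. Counting contributions, each of the $\operatorname{ord}(d^k)$ coderivatives in the factorization of $(d^k)^*$ is undone by one application of a $\mathcal B_j$, which supplies one unit of Sobolev smoothness and one power of $\operatorname{diam}(\omega)$; after bookkeeping, starting from $u\in H^M_0$ one lands in $H^{M+\operatorname{ord}(d^k)}_0(\omega)\otimes\mathbb X^{k+1}$ with $\|p\|_{L^2(\omega)}\lesssim \operatorname{diam}(\omega)^{\operatorname{ord}(d^k)}\|u\|_{L^2(\omega)}$, which is the claimed estimate.

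The main obstacle --- and the only genuinely analytic point --- is the assertion that the scalar de~Rham Bogovskii operator gains exactly one derivative while respecting the \emph{full} homogeneous boundary conditions, i.e.\ maps $H^s_0(\omega)\Lambda^{j}$ boundedly into $H^{s+1}_0(\omega)\Lambda^{j-1}$ with norm controlled by $\operatorname{diam}(\omega)$, on a general bounded contractible Lipschitz domain rather than only on one star-shaped with respect to a ball. On star-shaped-with-respect-to-a-ball pieces this follows from the explicit Bogovskii integral kernel, realized as an order $-1$ pseudodifferential operator with appropriately supported kernel; a general contractible Lipschitz $\omega$ is a finite union of such pieces, and one patches with a subordinate partition of unity, absorbing the commutator terms by a Mayer--Vietoris induction in which contractibility is used precisely to annihilate the cohomological obstruction to gluing. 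This is carried out in \cite{costabel2010bogovskiui}, and the BGG transport of the resulting operators is in \cite{vcap2023bounded}. It then remains only to treat the bottom of the complex: for $k=0$ one composes the operator just built with the $L^2(\omega)$-orthogonal projection onto $\mathcal Z(\omega)^{\perp}$ --- a projection, so the estimate is unaffected --- and observes that the cokernel of $(d^0)^*$ is $L^2$-dual to $\ker d^0 = \mathcal Z(\omega)$, so the range of $(d^0)^*$ is exactly $H^M_0(\omega)\otimes\mathbb X^0 \cap \mathcal Z(\omega)^{\perp}$; on that space the construction furnishes the required bounded right inverse, completing the proof.
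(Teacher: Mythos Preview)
The paper does not give its own proof of this proposition: it is stated as a known result with the sentence ``the following results hold \cite{vcap2023bounded,costabel2010bogovskiui}.'' Your proposal is precisely an outline of how those two references together yield the statement --- Costabel--McIntosh for the Bogovskii operator on the de~Rham side with the one-derivative gain and support property, and \v{C}ap--Hu for transporting the resulting Poincar\'e operators through the BGG construction to the tensor complexes --- so it matches the paper's (citation-only) approach.
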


Moreover, we assume that the generalized currents are homogenuous with respect to scaling. 
\begin{definition}[Homogeneity]
    We say that the generalized currents $\{\Upsilon_{\bullet}\}$ are homogenuous if for any $0\leq k\leq n$, there exists a constant $\ell_k\in \mathbb R$ such that for any $\tau \in\mathcal{T}_k$ and $\hat{\bm x}=\varphi(\bm x):=(\bm x-x_0)/a$ with fixed vertex $x_0$ of $\tau$ and $a>0$ which maps $\tau$ to $\hat{\tau} = \varphi(\tau)$, it holds that
    \begin{equation*}
        \Upsilon_{\tau}(\phi) = a^{\ell_k} \Upsilon_{\hat{\tau}}(\hat{\phi}), \quad\text{ for any }\phi\in C^{\infty}(\Omega)\otimes \mathbb X^k
    \end{equation*}
    here $\hat{\phi} =(\varphi^{-1})^*\phi =\phi\circ \varphi^{-1}$.
\end{definition}
\begin{remark}
    If $\{\Upsilon_{\bullet}\}$ are homogeneity, then from the definition of $\{\Upsilon_{\bullet}\}$, it holds that
    \begin{equation*}
        \ell_{k+1} = \ell_{k} + \operatorname{ord}(d_k)
    \end{equation*}
\end{remark}
% \lt{Can we give a consequent esimates on this?}

\begin{theorem}\label{thm:operatorL2}
  Suppose moreover that the triangulation is shape regular, the generalized currents $\{ \Upsilon_{\bullet}\}$ are homogenuous, and the Poincar\'{e} constant $C(\st^1(\tau))$ in Proposition \ref{prop:poincare_inequ} is uniformly bounded for all $\tau\in\mathcal{T}$. Then the operator $\pi^k$ constructed above is locally $L^2$ bounded, namely, there is a constant $C$, depends only on the regularity constant, such that 
$$\| \pi^k u\|_{L^2(\sigma)} \le C \| u\|_{L^2(\st^2(\sigma))}.$$
A direct consequence is that 
$$\| \pi^k u\|_{L^2(\Omega)} \le C' \| u\|_{L^2(\Omega)}.$$
\end{theorem}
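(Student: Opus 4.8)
The plan is to prove the estimate by a patch–scaling argument, following the strategy of Arnold and Guzmán for the de~Rham case. First I would unravel the construction of $\pi^k$ carried out in the proof of \Cref{thm:operator}: it realizes $\pi^k$, simplex by simplex, as a composite of finitely many \emph{patch-local building blocks} — $L^2(\Omega)$-orthogonal projections onto the finite-dimensional polynomial spaces $A^k(\sigma)$, $B^k(\tau)$ and onto $\mathcal Z$, applications of $d^k$ and of its restrictions $d^k_\sigma$, evaluations of the localized currents $\widetilde\Upsilon_\tau$, and the generalized Bogovskii right-inverses from \Cref{prop:poincare_inequ} on the patches $\st^1(\tau)$. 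Because $\pi^k$ is of local inner-product type and $\pi^k u|_\sigma$ depends only on $u|_{\st^2(\sigma)}$, and because $\st^2(\sigma)$ contains only $O(1)$ simplices under shape regularity, by Cauchy--Schwarz it suffices to bound the operator norm of each building block, restricted to a single patch, by a constant depending only on the shape-regularity constant; the powers of the local mesh size must then cancel in the composite.

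Next I would rescale. For a simplex $\tau$ with patch $\omega_\tau=\st^1(\tau)$, apply the pure dilation $\varphi(\bm x)=(\bm x-\bm x_0)/h_\tau$ used in the homogeneity hypothesis, so that $\widehat\omega_\tau=\varphi(\omega_\tau)$ has diameter comparable to $1$; since a dilation is a scalar multiple of the identity it does not mix tensor components, so the tensor-valued spaces $A^k(\cdot)\subset C^\infty\otimes\mathbb X^k$ and the trace operators $\tr_{\sigma\to\tau}$ are carried to themselves under pullback, and the rescaled problem is literally an instance of the same FECTS on $\widehat\omega_\tau$. I would then tabulate the scaling of each block: $L^2$-norms pick up $h_\tau^{n/2}$; $d^k$ picks up $h_\tau^{-\operatorname{ord}(d_k)}$; the localized current $\widetilde\Upsilon_\tau$ picks up $h_\tau^{\ell_k}$ with $\ell_{k+1}=\ell_k+\operatorname{ord}(d_k)$ by the homogeneity assumption; the Bogovskii inverse on $\omega_\tau$ has, by \Cref{prop:poincare_inequ} together with the uniform bound on $C(\st^1(\tau))$ and $\operatorname{diam}(\st^1(\tau))\sim h_\tau$, operator norm $\lesssim h_\tau^{\operatorname{ord}(d_k)}$; and the finite-dimensional $L^2$-projections have norm $1$. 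The exponent relation $\ell_{k+1}=\ell_k+\operatorname{ord}(d_k)$ is exactly what forces all powers of $h_\tau$ to cancel when the blocks are assembled along the complex, leaving a dimensionless estimate.

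It then remains to check that the constants attached to the \emph{rescaled} blocks are uniform over the infinitely many patch configurations occurring in a shape-regular family. I would argue by compactness: there are finitely many combinatorial types of patch, and within each type the geometric realizations, after dilation and translation putting $\bm x_0$ at the origin, form a precompact family of uniformly non-degenerate simplicial complexes (shape regularity confines all vertices to a fixed ball and bounds the in-radii from below, both closed conditions, so the closure of the family consists again of non-degenerate complexes). Each rescaled building block — the coefficient-to-$L^2$ norm equivalences on the fixed polynomial spaces, the $L^2$-projections, the evaluations of $\widetilde\Upsilon_\tau$, and the Bogovskii right-inverse restricted to its finite-dimensional polynomial image (whose norm is controlled directly by the uniform Poincaré constant) — depends continuously on, and is finite at, every configuration in this precompact closure, hence is bounded uniformly. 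Assembling the blocks and undoing the dilation gives $\|\pi^k u\|_{L^2(\sigma)}\le C\|u\|_{L^2(\st^2(\sigma))}$ with $C$ depending only on the shape-regularity constant, and then squaring, summing over $\sigma\in\mathcal T_n$, and invoking the bounded overlap of $\{\st^2(\sigma)\}_{\sigma\in\mathcal T_n}$ yields $\|\pi^k u\|_{L^2(\Omega)}\le C'\|u\|_{L^2(\Omega)}$.

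The step I expect to be the main obstacle is this uniformity over configurations: one must be sure that ``shape regularity $+$ homogeneity $+$ uniform Poincaré constant'' genuinely freezes \emph{every} constant, including those hidden inside the Bogovskii operator, which \Cref{prop:poincare_inequ} only asserts to exist. The safe route is never to use the Bogovskii operator in full generality but only on the finite-dimensional polynomial image that actually enters $\pi^k$ — there one may replace it by (Bogovskii composed with the $L^2$-projection onto that image), whose norm is governed by the stated uniform Poincaré constant, so no continuous dependence of the Bogovskii operator on the domain is needed and the compactness argument applies only to genuinely finite-dimensional linear-algebra data. A secondary point worth spelling out is that the dilation indeed preserves the symmetric/traceless tensor structure and the trace operators, so that each rescaled patch problem is exactly the original construction on a unit-size domain.
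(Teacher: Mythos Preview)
Your proposal is correct and follows essentially the same route as the paper, which defers the details to the scaling-plus-compactness argument of Arnold--Guzm\'an together with a dedicated scaling lemma (Lemma~\ref{lem:L2estimate:basis}) for the bubble basis functions $\mathsf E_\tau^k(\phi_{\tau,k}^i)$. The only cosmetic difference is in bookkeeping: the paper exploits the explicit inner-product form $\pi^k u=\sum_i(u,\mathsf\Omega_i)\,\mathsf\Phi_i$ and bounds the weights $\mathsf\Omega_i$ (built from Riesz representatives against bubble-weighted $L^2$ products and from the Bogovski\u{\i}-type construction of $\mathsf\Xi_\tau$) and the basis functions $\mathsf\Phi_i$ separately, rather than viewing $\pi^k$ as a composite of abstract operator blocks.
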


\begin{remark}
We emphasize that in this paper, we do not require the exactness of the differential complex in $\Omega$. Instead, we only need local exactness on the element patch. Therefore, our results are also applicable to non-trivial domains.
\end{remark}

\section{Finite element complexes with Trace Structures: 2D Case}
\label{sec:2d}
This section explores finite-element complexes with trace structures in two-dimensional spaces. The resulting complexes have already been presented in the existing literature. Consequently, the primary emphasis of this section (and the subsequent section as well) is to point out that these constructions indeed possess finer structures.

Note that $\mathcal{Z}$ for each complex is finite dimensional Hilbert space, then there exists a normalized orthogonal basis $\{z_i\}_{1\leq i\leq\operatorname{dim}\mathcal{Z}}$.

\subsection{Notations}
This subsection collects the notations frequently used in the sequel. Suppose that the face $f$ is on the $x-y$ plane. In two dimensions, for a given edge $e$, let $\bm t$, $\bm n$ be its unit tangential and normal vectors, respectively. Given a scalar function $u$ and vector function $\bm q =(q_1,q_2)^T$, we can then define the differential operator by
\begin{align*}
    \operatorname{grad} u =\nabla u=(\partial_{x}u,\partial_{y}u)^T,&\quad \operatorname{curl} u = (-\partial_{y}u,\partial_{x}u)^T, \\
    \operatorname{div} \bm q = \partial_{x}q_1 + \partial_{y}q_2,&\quad \operatorname{rot} \bm q = \partial_{x}q_2 - \partial_{y}q_1.
\end{align*}
For vector and tensor function, these differential operators act row-wise. We can also define the hessian operator
\begin{equation*}
    \hess u =\nabla^2u =\begin{pmatrix}
        \partial_{xx}u ~ \partial_{xy}u\\
        \partial_{xy}u ~\partial_{yy}u
    \end{pmatrix} .  
\end{equation*}
Furthermore, given a matrix $\bm M\in\mathbb M^{ n}$, define
\begin{equation*}
    \operatorname{sym}\bm M =\tfrac{1}{2}(\bm M +\bm M^T),\quad \operatorname{dev}\bm M = \bm M - \tfrac{1}{n}\operatorname{Tr}(\bm M)I_{n\times n}
\end{equation*}
here $\operatorname{Tr}(\bm M)$ denotes the trace of the matrix $\bm M$.

Next, we introduce some bubble polynomial function spaces on edges and faces. For any nonnegative integer $l$ we define the bubble space $\mathring P_k^{(l)}(e)$ on each edge $e\in\mathcal{E}$ by
\begin{equation*}
    \mathring P_k^{(l)}(e):=\{ u\in  P_k(e): u,\partial_{\bm t}^{m}u\text{ vanish at the end point of }e\text{ with } m\leq l\}.
\end{equation*}
Similarly, for any nonnegative integer $m,l$ we define the bubble space $\mathring P_k^{(m,l)}(f)$ on each face $f\in\mathcal{F}$ by
\begin{align*}
    \mathring P_k^{(m,l)}(f): &= \{u\in P_k(f): D^{\alpha}u\text{ vanish at all vertices of }f\text{ with }|\alpha|\leq m,\\
    & \ \ \ \ \ \ \ \ \ \ D^{\beta}u\text{ vanish on all edges of }f\text{ with }|\beta|\leq l\}.
\end{align*}
Specially, we define the the bubble space $\mathring P_k^{(m,-1)}(f)$ on each face $f\in\mathcal{F}$ by
\begin{align*}
    \mathring P_k^{(m,-1)}(f): = \{u\in P_k(f): D^{\alpha}u\text{ vanish at all vertices of }f\text{ with }|\alpha|\leq m\}.
\end{align*}

We introduce the jet at vertices, a concept that simplifies the expressions in the trace complexes (c.f. \cite{saunders1989,christiansen2018nodal}). 
For each integer $m$ and $x\in\mathcal{V}$, we define $\mathbb H^m(x)$ to be the space of the constant $m$-tensor in $n$ dimensions. It can be observed that $\mathbb H^0(x) \cong \mathbb R,$ $\mathbb H^1(x) \cong \mathbb R^n$ and $\mathbb H^2(x) \cong \mathbb R^{n\times n}$. 
Then define the $m$ jet as 
\begin{equation*}
    \mathbb J^m(x) = \mathbb H^0(x) \oplus \mathbb H^1(x) \cdots \mathbb H^m(x).
\end{equation*}
For the non-scalar case, we use the tensor product-type space to denote the corresponding jets. For example, $\mathbb J^m(x) \otimes \mathbb R^n$ denotes an $m$ jet with respect to the jet of vector-valued function. Then we can define the differential operators on jets. Here we present two examples.
\begin{example}
    Given a $1$-jet with respect to a two-dimensional vector function $\bm v$ at $x$ as 
\begin{equation*}
    \begin{pmatrix}
        v^1\\
        v^2\\
    \end{pmatrix}
    \oplus
    \begin{pmatrix}
        v^1_1& v^1_2\\
        v^2_1&v_2^2
    \end{pmatrix}
    \in\mathbb{J}^1(x)\otimes\mathbb R^2
\end{equation*}
where $v^1$ and $v^2$ are two components of $\bm v$ and $v^i_j$ is the $j$-th derivative of the $i$-th components. Then the divergence of the $1$-jet can be represented as
\begin{equation*}
    \div \big(\begin{pmatrix}
        v^1\\
        v^2\\
    \end{pmatrix}
    \oplus
    \begin{pmatrix}
        v^1_1& v^1_2\\
        v^2_1&v_2^2
    \end{pmatrix}\big) = v^1_1+v^2_2\in \mathbb J^0(x)
    \end{equation*}
\end{example}
\begin{example}
    Given a $2$-jet with respect to a scalar function $v$ at $x$ in 2D:
\begin{equation*}
 v\oplus
    \begin{pmatrix}
        v_1\\
        v_2\\
    \end{pmatrix}
    \oplus
    \begin{pmatrix}
        v_{11}& v_{12}\\
        v_{21}&v_{22}
    \end{pmatrix}
    \in\mathbb{J}^2(x)
\end{equation*}
where $v_{i}$ is the first derivative of $v$ and $v_{ij}$ is the second derivative of $v$. Then the gradient of the $2$-jet can be represented as
\begin{equation*}
    \grad \big(v\oplus
    \begin{pmatrix}
        v_1\\
        v_2\\
    \end{pmatrix}
    \oplus
    \begin{pmatrix}
        v_{11}& v_{12}\\
        v_{21}&v_{22}
    \end{pmatrix}\big) = 
    \begin{pmatrix}
        v_1\\
        v_2\\
    \end{pmatrix}
    \oplus
    \begin{pmatrix}
        v_{11}& v_{12}\\
        v_{21}&v_{22}
    \end{pmatrix}\in \mathbb J^1(x) \otimes\mathbb R^2
    \end{equation*}
\end{example}
Other differential operators ($\curl,\hess,\sym\curl,\div\div$, etc) can be represented in a similar way.

Particlulary, denote the symmetric $m$- tensor by $\tilde{\mathbb S}^m(x)$. It is obvious that the symmetric $m$-tensor is the range of the $m$-th order full derivative of a smooth functions, and therefore isomorphic to the homogenous polynomials with degree $= m$. Therefore, we can define the bubble space at vertex $x\in\mathcal{V}$
\begin{equation*}
    \mathbb J_B^{m}(x) = 
    \tilde{\mathbb S}^0(x) \oplus \cdots \oplus \tilde{\mathbb S}^m(x)
\end{equation*}
Then there exists a natural isomorphism between  $\mathbb J_B^m(x)$ and the polynomial space $P_m(\Omega)$.

We first consider the smooth de Rham complex in 2D:
	\begin{equation}\label{eq:complex-dR-2}\begin{tikzcd} 0 \ar[r,""] &  C^{\infty}(\Omega) \otimes \mathbb R \ar[r,"\grad"] & C^{\infty}(\Omega) \otimes \mathbb R^2 \ar[r,"\rot"] &   C^{\infty}(\Omega) \otimes \mathbb R \ar[r] &0.\end{tikzcd}.\end{equation}

Note that now $\mathcal{Z}=\mathbb R$. Then we can choose $\Upsilon_{\bullet}$ as follows: $\Upsilon_{f} :u \mapsto \int_f u$, $\Upsilon_e :\bm v \mapsto \int_e \bm v \cdot \bm t$ and  $\Upsilon_x :u \mapsto u(x)$. Therefore, it holds that $(\Upsilon_{\bullet}, \mathbb R) $ is a generalized currents of \eqref{eq:complex-dR-2}.

Note that for the complexes without additional smoothness, the results have been extensively studied in \cite{arnold2018finite}, or in the concept of finite element system (FES) \cite{christiansen2011topics} . In this regard, the results of this paper can be considerfed as an improvement of the existing results in within the FES framework.

% \revise{For clarity, in this section we only present the construction of FECTS, and the edge bubble complex and the vertex bubble complex. For the detailed proof, we will show it in [ref]. }

\subsection{Hermite-Stenberg de Rham Complexes}
Consider the following FECTS with $k\geq 3$.

\begin{equation}
\label{eq:fes-HmdR}
\begin{tikzcd}
    P_k(f) \arrow[r, "\grad"] \arrow[d, "u|_e"]  \ar[dd, bend right=30, "", swap] & P_{k-1}(f) \otimes \mathbb R^2 \arrow[r, "\rot"] \arrow[d, "\bm v|_e \cdot  \bm t"] \ar[dd, bend left=60, "{} "] & P_{k-2}(f) \\
    P_k(e) \arrow[r, "\partial_{\bm t}"] \arrow[d, "{u(x)\oplus \tfrac{d}{d\bm t}u(x)\bm t}"] & P_{k-1}(e) \arrow[d, "{v(x) \bm t}"]& \\
    \mathbb J^1(x) \arrow[r, "\grad"] & {\mathbb J^0(x) \otimes \mathbb R^2} &
\end{tikzcd}
\end{equation}
Here, we specify the trace structure from faces to vertices. At the first column, the mapping $P_k(f) \to \mathbb J^1(x)$ is defined as $u \mapsto u(x) \oplus \nabla u(x) $; at the second column, the mapping $P_{k-1}(f)\oplus\mathbb R^2 \to \mathbb J^0 (x) \otimes \mathbb R^2$ is defined as $\bm v \mapsto \bm v(x)$. 
\begin{remark}
    Note that here the choice of the trace structure from edges to vertices is not unique. For example, denote by $\bm e_i,1\leq i\leq 2$, the standard basis of $\mathbb R^2$. Fixed $i$, then an alternative choice of the trace structure from edges to vertices is that: At the first column, the mapping $P_k(e)\to\mathbb J^1(x)$ is defined as $u\mapsto u(x)\oplus\tfrac{d}{d\bm t}u(x)\bm e_i$; at the second column, the mapping $P_{k-1}(e)\to\mathbb J^0(x)\otimes\mathbb R^2$ is defined as $v\mapsto v(x)\bm e_i$. The reason we can do this is that we no longer require the composition of traces to be a trace again, and we do not need the trace structure from edges to vertices commutes with differential operators.
\end{remark}
\begin{proposition}
The FECTS \eqref{eq:fes-HmdR} is compatible with respect to the generalized currents for de Rham \eqref{eq:complex-dR-2}.
\end{proposition}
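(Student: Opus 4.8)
To verify compatibility we must check the two conditions in the definition: (1) the generalized currents for the de~Rham complex $\{\Upsilon_\bullet,\mathbb R\}$ localize through the trace structure, and (2) the bubble complexes at each simplex $\tau$ (vertex, edge, face), suitably punctured by $\ker\widetilde\Upsilon_\tau$ at the $\dim\tau$-th slot, are exact. The plan is to address these in turn, and along the way to record the geometric decomposition, which is needed to invoke \Cref{thm:cohomology}.

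First I would treat the localization. Since every shape function space $A^k(\sigma)$ is polynomial, the currents $\Upsilon_x(u)=u(x)$, $\Upsilon_e(\bm v)=\int_e\bm v\cdot\bm t$, $\Upsilon_f(u)=\int_f u$ clearly factor through the traces: for the vertex, $\widetilde\Upsilon_x$ reads off the $\mathbb J^0(x)$-component of $u(x)\oplus\nabla u(x)\in\mathbb J^1(x)$; for the edge, $\widetilde\Upsilon_e(q)=\int_e q\,\bm t\cdot\bm t=\int_e q$ on $A^1(e)=P_{k-1}(e)$; for the face, $\widetilde\Upsilon_f$ is the identity-to-integral map on $P_{k-2}(f)$. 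One checks $\widetilde\Upsilon_\tau(\tr_{\sigma\to\tau}w)=\Upsilon_\tau(w)$ directly from the definitions of the trace operators in \eqref{eq:fes-HmdR}; this is routine. Also $\mathcal Z=\mathbb R\subset\bm A^0$ since constants lie in $P_k(f)$ and have single-valued traces.

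Next, the bubble complexes. At a vertex $x$ the complex is
$0\to\mathbb J^1(x)\xrightarrow{\grad}\mathbb J^0(x)\otimes\mathbb R^2\cap\ker\widetilde\Upsilon_x\to 0$ — wait, I must be careful about which slot carries the $\ker\widetilde\Upsilon$: since $\dim x=0$, it is $B^0(x)\cap\ker\widetilde\Upsilon_x$ that appears. So the vertex complex is $0\to (\mathbb J^1(x)\cap\ker\widetilde\Upsilon_x)\xrightarrow{\grad}\mathbb J^0(x)\otimes\mathbb R^2\to 0$. Now $\ker\widetilde\Upsilon_x$ on $\mathbb J^1(x)$ kills the function value, leaving $\mathbb H^1(x)\cong\mathbb R^2$, and $\grad$ sends the $1$-jet $0\oplus\nabla u$ to $\nabla u\in\mathbb J^0(x)\otimes\mathbb R^2\cong\mathbb R^2$; this is an isomorphism, so the complex is exact. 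At an edge $e$ the bubble spaces are $B^0(e)=\mathring P_k^{(1)}(e)$ (functions on $e$ vanishing to first order at endpoints, via the trace $u\mapsto u(x)\oplus\tfrac{d}{d\bm t}u(x)\bm t$) and $B^1(e)=\mathring P_{k-1}^{(0)}(e)$ (functions vanishing at endpoints, via $v\mapsto v(x)\bm t$); since $\dim e=1$, the $\ker\widetilde\Upsilon_e$ sits on $B^1(e)$, giving $\{v\in\mathring P_{k-1}^{(0)}(e):\int_e v=0\}$. The edge complex $0\to\mathring P_k^{(1)}(e)\xrightarrow{\partial_{\bm t}}\{v\in\mathring P_{k-1}^{(0)}(e):\int_e v=0\}\to 0$ is exact because $\partial_{\bm t}$ is injective on functions vanishing at both endpoints with vanishing derivative there modulo constants — actually injective outright since a constant in $\mathring P_k^{(1)}(e)$ is zero — and its image is exactly the mean-zero functions in $\mathring P_{k-1}^{(0)}(e)$ (a dimension count, $\dim\mathring P_k^{(1)}(e)=k-3$ versus $\dim\mathring P_{k-1}^{(0)}(e)-1=k-2-1=k-3$, confirms surjectivity once injectivity is known). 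At a face $f$ the bubble spaces are $B^0(f)=\mathring P_k^{(1,0)}(f)$, $B^1(f)=\mathring P_{k-1}^{(0,-1)}(f)\otimes\mathbb R^2$ cut out by $\bm v|_e\cdot\bm t=0$ and $\bm v(x)=0$, and $B^2(f)=P_{k-2}(f)$; since $\dim f=2$, the puncture $\ker\widetilde\Upsilon_f$ lands on $B^2(f)$, giving $\{p\in P_{k-2}(f):\int_f p=0\}$. Exactness of $0\to B^0(f)\xrightarrow{\grad}B^1(f)\xrightarrow{\rot}\{p:\int_f p=0\}\to 0$ is essentially the statement that the Hermite–Stenberg (or Hermite) pair forms an exact de~Rham complex on a single triangle with the appropriate boundary conditions — this is the content already available from FEEC-type arguments, combined with a dimension count; I would reduce it to the known exactness of the polynomial de~Rham complex on $f$ with these bubble boundary conditions (e.g. via \cite{arnold2018finite} or the local exactness assumed in the excerpt applied to $\omega=f$), tracking the single cohomology class that $\mathcal Z=\mathbb R$ contributes and which is precisely removed by the $\int_f p=0$ condition.

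The main obstacle is the face-level exactness: one must verify that imposing the bubble boundary conditions (which encode the $C^1$-at-vertices, $C^0$-on-edges continuity of Hermite, together with the $\bm v\cdot\bm t$ edge condition on the middle space) does not create extra cohomology beyond the single class accounted for by $\mathbb R$. I would handle this by a careful dimension count — computing $\dim B^0(f)$, $\dim B^1(f)$, $\dim B^2(f)$ explicitly in terms of $k$ — together with injectivity of $\grad$ on $B^0(f)$ (immediate, as a nonzero constant is excluded by the vertex conditions) and the standard fact that $\rot$ is surjective onto mean-zero polynomials from the appropriate bubble space; the Euler characteristic of the punctured complex then pins down that the middle cohomology vanishes. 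Finally, summing $\dim B(\tau)$ over all $\tau\trianglelefteq f$ and matching $\dim A(f)$ at each slot gives the geometric decomposition, so that \Cref{thm:cohomology} applies and the proposition follows.
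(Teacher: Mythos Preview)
Your proof is correct and follows the same route as the paper: explicit localization via $\widetilde\Upsilon_\tau$, then exactness of the vertex, edge, and face bubble complexes (the paper simply cites \cite{christiansen2018nodal} for the face case rather than arguing by dimension count as you do). Your edge bubble complex $0\to\mathring P_k^{(1)}(e)\xrightarrow{\partial_{\bm t}}\{v\in\mathring P_{k-1}^{(0)}(e):\int_e v=0\}\to 0$ is in fact stated more precisely than the paper's printed version, which writes $\mathring P_k^{(0)}(e)\to P_{k-1}(e)/\mathbb R$, inconsistent with its own Example of the Hermite edge bubble $B(e)=\lambda_e^2 P_{k-4}(e)$.
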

\begin{proof}
Set $\widetilde{\Upsilon}_{x}:a_0\oplus a_1\mapsto a_0$ for all vertex $x\in\mathcal{V}$, $\widetilde{\Upsilon}_{e}:v\mapsto\int_e v$ for all edge $e\in \mathcal{E}$ and $\widetilde{\Upsilon}_{f}:q\mapsto\int_f q$ for all face $f\in\mathcal{F}$. Then it is easy to see that $\Upsilon_{\bullet}$ can be localized. Note that the face bubble complex is 
\begin{equation*}
    \begin{tikzcd}
        0 \ar[r] &  \mathring P_k^{(1,0)}(f) \ar[r,"\grad"] &\mathring P_{k-1}^{(0,-1)}(\operatorname{rot},f;\mathbb R^2) \ar[r,"\rot"] & P_{k-2}(f) / \mathbb R \ar[r] &0.
    \end{tikzcd}
\end{equation*}
here the bubble space $\mathring P_{k-1}^{(0,-1)}(\operatorname{rot},f;\mathbb R^2):=\{\bm u\in \mathring P_{k-1}^{(0,-1)}(f)\otimes\mathbb R^2:\bm u\cdot\bm t=0 \text{ on each edge } \ e\subset \partial f\}$, which is exact (cf.~\cite{christiansen2018nodal}). The bubble complex on each edge $e\in\mathcal{E}$ is
\begin{equation*}
    \begin{tikzcd}
        0 \ar[r]& \mathring{P}^{(0)}_{k}(e) \ar[r,"\partial_{\bm t}"] &P_{k-1}(e) / \mathbb R \ar[r]& 0,
    \end{tikzcd}
\end{equation*}
and the bubble complex on each vertex $x\in\mathcal{V}$ is isomorphic to
\begin{equation*}
    \begin{tikzcd}
        0 \ar[r]& P_1(\Omega)/\mathbb R\ar[r,"\grad"] &P_0(\Omega) \otimes \mathbb R^2 \ar[r]& 0,
    \end{tikzcd}
\end{equation*}
 which are also exact.
\end{proof}

The corresponding global space forms the following complex \cite{christiansen2018nodal}:
\begin{equation}
\label{eq:HmdR}
\begin{tikzcd}
	0 \ar[r] & \mathbf{Hm}_k \ar[r,"\grad"] & \mathbf{St}_{k-1} \ar[r,"\rot"]& \mathbf{DG}_{k-2} \ar[r] & 0.
\end{tikzcd}
\end{equation}
Here, $\mathbf{Hm}_k$ is the space of Hermite element:
$ \mathbf{Hm}_k = \{ u: u|_f \in P_k(f), u \in C^1(\mathcal V), u \in C^0(\mathcal E)\}.$
$\mathbf{St}_{k-1}$ is the space of rotational Stenberg element:
$\mathbf{St}_{k-1} = \{ \bm u:\bm u|_f \in P_{k-1}(f) \otimes \mathbb R^2, \bm u \in C^1(\mathcal V), \bm u \cdot \bm t \in C^0(\mathcal E)\}.$
 $\mathbf{DG}_{k-2}$ is the discontinuous piecewise $P_{k-2}$ space.
 As a consequence of Theorem \ref{thm:cohomology}, the cohomology of \eqref{eq:HmdR} is isomorphic to $\mathcal H_{dR}(\Omega)$.
 
\subsection{Falk-Neilan Stokes Complexes}
Consider the following FECTS with $k\geq 5$, which will lead to a finite element de Rham complex with higher regularity. Such complexes have been applied in the computation of the Stokes problem, see \cite{falk2013stokes}. 
\begin{equation}
\label{eq:fes-ArdR}
\begin{tikzcd}
    P_k(f) \arrow[r, "\grad"] \arrow[d, "{(u|e,(\partial_{\bm n}u)|_e)}"]  \ar[dd, bend right=60, "", swap] & P_{k-1}(f) \otimes \mathbb R^2 \arrow[r, "\rot"] \arrow[d, "{(\bm v|_e\cdot \bm t,\bm v|_e\cdot \bm n)}"] \ar[dd, bend left=60, "{} "] & P_{k-2}(f)\ar[dd,"w(x)"] \\
    {\begin{pmatrix} P_k(e) \\ P_{k-1}(e)\end{pmatrix}}  \arrow[r, "{\mathbf{d}(\partial_{\bm t}, id)}"] \arrow[d, "{}"] &  {\begin{pmatrix} {P}_{k-1}(e) \\ {P}_{k-1}(e)  \end{pmatrix}}  \arrow[d, ""] &   \\
    \mathbb J^2(x) \arrow[r, "\grad"] &  \mathbb J^1(x) \otimes \mathbb R^2 \ar[r,"\rot"] & \mathbb J^0(x)
\end{tikzcd}
\end{equation}
Here, $\mathbf{d}(\partial_{\bm t}, \operatorname{id})$ is an abbreviation of $\begin{pmatrix}\partial_{\bm t} & 0 \\ 0& \operatorname{id}  \end{pmatrix}$. Hereafter, we shall adopt this conventional notation to shorten the expression. 
{We first specify the trace structres from faces to vertices: At the first column, the mapping $P_k(f) \to \mathbb J^2(x)$ is defined as $u \mapsto u(x) \oplus \nabla u(x) \oplus \nabla^2u(x)$; at the second column, the mapping $P_{k-1}(f)\otimes\mathbb R^2 \to \mathbb J^1 (x) \otimes \mathbb R^2$ is defined as $\bm v \mapsto \bm v(x)\oplus\nabla \bm v(x)$.

Then we specify the trace structures from egdes to vertices: The mapping at the first column is $ {\begin{pmatrix} P_k(e) \\ P_{k-1}(e) \end{pmatrix}} \to \mathbb J^2(x) $ is $(a_0, a_1) \mapsto a_0(x)\oplus \big(\tfrac{d}{d\bm t}a_0(x)\bm t +a_1(x)\bm n\big)\oplus \big(\tfrac{d^2}{d\bm t^2}a_0(x)\bm t\bm t^T + \tfrac{d}{d\bm t}a_1(x)\bm n\bm t^T\big)$; at the second column, the mapping $ {\begin{pmatrix} P_{k-1}(e) \\ P_{k-1}(e) \end{pmatrix}} \to \mathbb J^1(x)\otimes \mathbb R^2 $ is $(a_0, a_1) \mapsto 
\big(a_0(x)\bm t+a_1(x)\bm n\big)\oplus \big(\tfrac{d}{d\bm t}a_0(x)\bm t\bm t^T + \tfrac{d}{d\bm t}a_1(x)\bm n\bm t^T\big) $.}
% Here the differential operators at vertices are 
% $$
% d_x^0 : \mathbb J^2(x) \to \mathbb J^1(x) \otimes \mathbb R^2, (a_0,a_1,a_2) \mapsto (a_1, a_2 )
%  $$
% and
% $$
% d_x^1 : \mathbb J^1(x) \otimes \mathbb R^2 \to \mathbb J^0(x), (a_0,a_1) \mapsto (\skw a_1)
%  $$

\begin{proposition}
The FECTS \eqref{eq:fes-ArdR} is compatible with respect to the generalized currents for de Rham \eqref{eq:complex-dR-2}.
\end{proposition}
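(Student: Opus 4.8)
The plan is to verify the two conditions in the definition of compatibility directly, following the template already established in the proof for the Hermite–Stenberg complex \eqref{eq:fes-HmdR}. First I would exhibit the localized currents $\widetilde{\Upsilon}_\tau$: since $\mathcal Z = \mathbb R$ here, set $\widetilde{\Upsilon}_x : \mathbb J^2(x) \to \mathbb R$ to be projection onto the $0$-jet component $a_0 \mapsto a_0$, set $\widetilde{\Upsilon}_e : (a_0, a_1) \mapsto \int_e a_0$, and $\widetilde{\Upsilon}_f : q \mapsto \int_f q$. One then checks $\widetilde{\Upsilon}_\tau(\tr_{f \to \tau} w) = \Upsilon_\tau(w)$ for $\tau$ a vertex, edge, or face: at a vertex this is immediate because the first component of $\tr_{f\to x} u$ is $u(x)$; at an edge the first component of $\tr_{f\to e} u$ is $u|_e$ and integrating recovers $\int_e u = \Upsilon_e(u)$; at the face it is the identity. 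Also $\mathcal Z = \mathbb R \subset \bm A^0$ is clear since constants lie in the global Stokes-type space $\mathbf{St}_k$-analogue with $C^1(\mathcal V)$, $C^1(\mathcal E)$ continuity.

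The substance is the exactness of the bubble complexes \eqref{eq:bubble_complex} at each dimension. I would identify the three bubble complexes explicitly. At a vertex $x$, using the isomorphism $\mathbb J^2_B(x) \cong P_2(\Omega)$ and the corresponding identifications for the $\mathbb J^1 \otimes \mathbb R^2$ and $\mathbb J^0$ jets, the vertex bubble complex reads
\begin{equation*}
\begin{tikzcd}
0 \ar[r] & P_2(\Omega)/\mathbb R \ar[r,"\grad"] & P_1(\Omega)\otimes \mathbb R^2 \ar[r,"\rot"] & P_0(\Omega) \ar[r] & 0,
\end{tikzcd}
\end{equation*}
which is the polynomial de Rham complex (after quotienting constants, which is exactly the effect of intersecting with $\ker\widetilde\Upsilon_x$), hence exact. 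At an edge $e$, the bubble complex is
\begin{equation*}
\begin{tikzcd}
0 \ar[r] & \begin{pmatrix} \mathring P_k^{(1)}(e) \\ \mathring P_{k-1}^{(0)}(e)\end{pmatrix} \ar[r,"{\mathbf d(\partial_{\bm t},\operatorname{id})}"] & \begin{pmatrix} \mathring P_{k-1}^{(0)}(e) \\ P_{k-1}(e)/\mathbb R \end{pmatrix} \ar[r] & 0,
\end{tikzcd}
\end{equation*}
whose exactness decouples into the two scalar statements $\partial_{\bm t}: \mathring P_k^{(1)}(e) \to \mathring P_{k-1}^{(0)}(e)$ bijective (an antiderivative of a function vanishing at both endpoints that also vanishes at the endpoints, with the derivative condition accounting for the second boundary order) and $\operatorname{id}: \mathring P_{k-1}^{(0)}(e) \to P_{k-1}(e)/\mathbb R$ bijective; I would check these by a dimension count together with injectivity. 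At a face $f$, the bubble complex
\begin{equation*}
\begin{tikzcd}
0 \ar[r] & \mathring P_k^{(2,1)}(f) \ar[r,"\grad"] & \mathring P_{k-1}^{(1,-1)}(\rot, f;\mathbb R^2) \ar[r,"\rot"] & P_{k-2}(f)/\mathbb R \ar[r] & 0
\end{tikzcd}
\end{equation*}
is the relevant face-bubble de Rham complex with $C^1$ vertex data removed; its exactness is established in the Falk–Neilan / Christiansen–Hu literature \cite{falk2013stokes,christiansen2018nodal}, and I would cite it, or alternatively derive it from the known exactness of the full polynomial de Rham complex on $f$ by peeling off the vertex and edge contributions using the geometric decomposition.

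The main obstacle I anticipate is pinning down the precise bubble spaces — in particular getting the boundary-vanishing orders right on edges and faces (the superscripts $(m,l)$) so that the maps in the bubble complexes are genuinely the restrictions of $\grad$ and $\rot$ and so that the dimension counts close. This requires carefully tracing through which jet components are killed by $\tr_{f\to x}$, $\tr_{f\to e}$, $\tr_{e\to x}$, and then invoking the geometric decomposition assumption (which must itself be checked, or quoted from the Stokes element literature) to guarantee $\sum_{\tau \trianglelefteq f} \dim B(\tau) = \dim P_k(f)$ in each slot of the complex. Once the spaces are correctly identified, exactness at the vertex and edge levels is elementary and at the face level reduces to a cited result; the commuting property $d_\tau \circ \tr = \tr \circ d_\sigma$ at the top level is visible from the diagram \eqref{eq:fes-ArdR} itself.
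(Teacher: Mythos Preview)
Your overall strategy is exactly the paper's: define $\widetilde\Upsilon_x(a_0\oplus a_1\oplus a_2)=a_0$, $\widetilde\Upsilon_e(v_0,v_1)=\int_e v_0$, $\widetilde\Upsilon_f(q)=\int_f q$, then identify and verify exactness of the three bubble complexes. The vertex bubble complex you write is correct and matches the paper.

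However, your edge and face bubble spaces are miscomputed, and this is precisely the obstacle you anticipated. On edges, the trace $\tr_{e\to x}$ in \eqref{eq:fes-ArdR} records $a_0(x)$, $\tfrac{d}{d\bm t}a_0(x)$, $\tfrac{d^2}{d\bm t^2}a_0(x)$ and $a_1(x)$, $\tfrac{d}{d\bm t}a_1(x)$ (reading off the $\mathbb J^2(x)$ target), so the edge bubble in the first column is $\mathring P_k^{(2)}(e)\times\mathring P_{k-1}^{(1)}(e)$, not $\mathring P_k^{(1)}(e)\times\mathring P_{k-1}^{(0)}(e)$. Similarly in the second column both components land in $\mathring P_{k-1}^{(1)}(e)$, and the kernel of $\widetilde\Upsilon_e$ removes one dimension from the \emph{first} component (your quotient is on the wrong slot). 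With your spaces the dimension count fails: $\partial_{\bm t}:\mathring P_k^{(1)}(e)\to\mathring P_{k-1}^{(0)}(e)$ has dimensions $k-3$ versus $k-2$, and your ``$\operatorname{id}$'' map is $k-2$ versus $k-1$; neither is bijective. The paper's edge bubble complex is
\[
0\to\begin{pmatrix}\mathring P_k^{(2)}(e)\\ \mathring P_{k-1}^{(1)}(e)\end{pmatrix}\xrightarrow{\mathbf d(\partial_{\bm t},\operatorname{id})}\begin{pmatrix}\mathring P_{k-1}^{(1)}(e)\\ \mathring P_{k-1}^{(1)}(e)\end{pmatrix}\Big/\begin{pmatrix}\mathbb R\\ 0\end{pmatrix}\to 0.
\]
On faces, the trace $\tr_{f\to e}$ in the second column captures the \emph{full} vector $(\bm v\cdot\bm t,\bm v\cdot\bm n)$, so the face bubble is $\mathring P_{k-1}^{(1,0)}(f)\otimes\mathbb R^2$, not a $\rot$-type subspace as in the Hermite--Stenberg case; and in the third column there is a vertex trace $w\mapsto w(x)$, so the face bubble is $\mathring P_{k-2}^{(0,-1)}(f)/\mathbb R$, not $P_{k-2}(f)/\mathbb R$. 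The corrected face bubble complex is the one whose exactness is cited from \cite{falk2013stokes}.
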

\begin{proof}
Set $\widetilde{\Upsilon}_{x}:a_0\oplus a_1\oplus a_2\mapsto a_0$ for all vertex $x\in\mathcal{V}$, $\widetilde{\Upsilon}_{e}:(v_0,v_1)\mapsto\int_e v_0$ for all edge $e\in \mathcal{E}$ and $\widetilde{\Upsilon}_{f} :q\mapsto\int_fq$ for all face $f\in\mathcal{F}$. Then it is easy to see that $\Upsilon_{\bullet}$ can be localized.  The bubble complex on each face $f\in\mathcal{F}$ is
\begin{equation*}
\begin{tikzcd}
0 \ar[r]&
 \mathring P_{k}^{(2,1)}(f)  
    \arrow[r, "\grad"] 
    & \mathring P_{k-1}^{(1,0)}(f) \otimes \mathbb R^2
    \ar[r,"\rot"] & \mathring P_{k-2}^{(0,-1)}(f) / \mathbb R \ar[r] & 0.
\end{tikzcd}
\end{equation*}
which is exact (cf.~\cite{falk2013stokes}). The bubble complex on each edge $e\in\mathcal{E}$ is 
\begin{equation*}
\begin{tikzcd}[column sep = large]
0 \ar[r]&
 {\begin{pmatrix} \mathring{P}_k^{(2)}(e) \\ \mathring{P}_{k-1}^{(1)}(e) \end{pmatrix}} 
    \arrow[r, "{\mathbf{d}(\partial_{\bm t}, \operatorname{id})}"] 
    & 
 {\begin{pmatrix} \mathring{P}_{k-1}^{(1)}(e) \\ \mathring{P}_{k-1}^{(1)}(e) \end{pmatrix} / \begin{pmatrix} \mathbb R \\ 0 \end{pmatrix}}  
    \ar[r] & 0,
\end{tikzcd}
\end{equation*}
and the bubble complex at each vertex $x\in\mathcal{V}$ is isomorphic to
\begin{equation*}
    \begin{tikzcd}
        0 \ar[r]& P_2(\Omega)/\mathbb R \ar[r,"\grad"]& P_1(\Omega) \otimes \mathbb R^2 \ar[r,"\rot"] &P_0(\Omega)\ar[r] &0,
    \end{tikzcd}
\end{equation*}
which are exact. 
\end{proof}

The corresponding global space complex is the Argyris--Falk--Neilan:
\begin{equation}
\label{eq:FNdR}
\begin{tikzcd}
	0 \ar[r] & \mathbf{Ar}_k \ar[r] & \mathbf{Hm}_{k-1} \otimes \mathbb R^2 \ar[r]& \mathbf{DG}_{k-2}^{(0)} \ar[r] & 0.
\end{tikzcd}
\end{equation}
Here, $\mathbf{Ar}_k$ is the space of Argyris element:
$ \mathbf{Ar}_k = \{ u: u|_f \in P_k(f), u \in C^2(\mathcal V), u \in C^1(\mathcal E)\}.$
The last space $\mathbf{DG}_{k-2}^{(0)}$ is the space of piecewise $P_{k-2}$ function $u$ such that $u$ is continuous at vertices.   
As a consequence of Theorem \ref{thm:cohomology}, the cohomology of \eqref{eq:FNdR} is isomorphic to $\mathcal H_{dR}(\Omega).$

\subsection{Hu-Zhang Hessian Complexes}

Now we move our focus beyond the de Rham complexes. Consider the following Hessian complexes in two dimensions:
\begin{equation}\label{eq:complex-hess-2}\begin{tikzcd} 0 \ar[r,""] &  C^{\infty}(\Omega)  \ar[r,"\hess"] & C^{\infty}(\Omega) \otimes \mathbb S^2 \ar[r,"\rot"] &   C^{\infty}(\Omega) \otimes \mathbb R^2 \ar[r] &0.\end{tikzcd}\end{equation}
The generalized currents are listed in \Cref{lem:currents-2D-hessian}.
% Note that now $\mathcal{Z} = P_1(\Omega)$, next we introduce the following generalized currents. 
% \begin{lemma}[Generalized currents for 2D Hessian]
% Let $\psi_1,\psi_2  ,\psi_3$ be a basis of $\mathcal{RT}^2= \{\bm a + b \bm x : \bm a \in\mathbb R^2 , b \in \mathbb R \}$. Recall that $\{z_i\}_i$ is a normalized basis of $\mathcal{Z}$, then set 
% \begin{equation*}
%     \Upsilon_x: C^{\infty}(\Omega)  \to \mathcal Z, \Upsilon_x(u)  = \sum_{i=1}^3\big((\psi_i \cdot \nabla u)(x) - \tfrac{1}{2}(\div \psi_i \cdot u)(x)\big)z_i,
% \end{equation*}
% \begin{equation*}
%     \Upsilon_e: C^{\infty}(\Omega) \otimes \mathbb S ^ 2\to \mathcal Z, \Upsilon_e(\bm \sigma) = \big(\int_{e} (\bm \sigma \bm t) \cdot \psi_i\big)z_i,
% \end{equation*}
% and 
% \begin{equation*}
%     \Upsilon_f: C^{\infty}(\Omega) \otimes \mathbb R^2  \to \mathcal Z, \Upsilon_f(\bm p) = \big(\int_{f} \bm p \cdot \psi_i\big)z_i,
% \end{equation*}
% for each $x\in\mathcal{V},e\in\mathcal{E}$ and $f\in\mathcal{F}$. Then, $(\Upsilon_{\bullet},\mathcal Z)$ is the generalized current.
% \end{lemma}
% \begin{proof}
%     By direct integrations by parts.
% \end{proof}

Now we consider the following FECTS with for Hessian complex with $k\geq 5$:
\begin{equation}
\label{eq:fes-Hess2D}
\begin{tikzcd}
    P_k(f) \arrow[r, "\hess"] \arrow[d, "{(u|_e,(\partial_{\bm n}u)|_e)}"]  \ar[dd, bend right=60, "", swap] & {P_{k-2}(f)} \otimes \mathbb S^2 \arrow[r, "\rot"] \arrow[d, "{(\bm t^T\bm \sigma|_e\bm t,\bm n^T\bm \sigma|_e\bm t)}"] \ar[dd, bend left=60, "{} "] & P_{k-3}(f) \otimes \mathbb R^2 \\
    {\begin{pmatrix} P_k(e) \\ P_{k-1}(e) \end{pmatrix}} \arrow[r, "{\mathbf{d}(\partial_{\bm t\bm t}, \partial_{\bm t})}"] \arrow[d, "{}"] & {\begin{pmatrix} P_{k-2}(e) \\ P_{k-2}(e)\end{pmatrix}}  \arrow[d, ""] &   \\
    \mathbb J^2(x) \arrow[r, "\hess"] & \mathbb J^0(x) \otimes \mathbb M^2 & 
\end{tikzcd}
\end{equation}
{We first specify the trace structures from faces to vertices: At the first column, the mapping $P_k(f) \to \mathbb J^2(x)$ is defined as $u \mapsto u(x) \oplus \nabla u(x) \oplus \nabla^2u(x)$; at the second column, the mapping $P_{k-2}(f)\otimes\mathbb S^2 \to \mathbb J^0 (x) \otimes \mathbb M^2$ is defined as $\bm v \mapsto \bm v(x)$. Then we specify the trace structures from egdes to vertice: The mapping at the first column $ {\begin{pmatrix} P_k(e) \\ P_{k-1}(e) \end{pmatrix}} \to \mathbb J^2(x) $ is $(a_0, a_1) \mapsto a_0(x) \oplus \big(\tfrac{d}{d\bm t}a_0(x) \bm t + a_1(x) \bm n\big) \oplus \big(\tfrac{d^2}{d\bm t^2}a_0(x) \bm t\bm t^T + \tfrac{d}{d\bm t}a_1(x)\bm n \bm t^T\big)$; and the mapping at the second column $ {\begin{pmatrix} P_{k-2}(e) \\ P_{k-2}(e) \end{pmatrix}} \to \mathbb J^0(x)\otimes\mathbb M^2 $ is $(a_0, a_1) \mapsto a_0(x) \bm t\bm t^T + a_1(x) \bm n \bm t^T$.}
% Here the horizontal mapping at vertices is 
% $$ d_x: \mathbb J^2(x) \to \mathbb J^0(x) \otimes \mathbb S : (a_0,a_1,a_2) \mapsto (a_2).$$ 

\begin{proposition}
The FECTS \eqref{eq:fes-Hess2D} is compatible with respect to the generalized currents for Hessian \eqref{eq:complex-hess-2}.
\end{proposition}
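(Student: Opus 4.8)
The plan is to verify the two conditions in the definition of compatibility: the localization of the generalized currents through the trace structure, and the exactness of the bubble complexes \eqref{eq:bubble_complex} at the vertex, edge, and face levels. For the localization, recall from \Cref{lem:currents-2D-hessian} that $\Upsilon_x$ only depends on $u(x)$ and $\nabla u(x)$, while $\Upsilon_e$ depends only on $(\bm\sigma\bm t)|_e$ (which is recovered from $\bm t^T\bm\sigma\bm t$ and $\bm n^T\bm\sigma\bm t$), and $\Upsilon_f$ depends only on $\bm p|_f$. So I would define $\widetilde{\Upsilon}_x$ on $\mathbb J^2(x)$ by keeping the $\mathbb H^0\oplus\mathbb H^1$ part and feeding it into the formula $\sum_i\big(\psi_i\cdot\nabla u(x)-\tfrac12(\div\psi_i)u(x)\big)z_i$ (the Hessian part $\mathbb H^2$ is discarded); define $\widetilde{\Upsilon}_e$ on $\begin{pmatrix}P_{k-2}(e)\\ P_{k-2}(e)\end{pmatrix}$ by $(\sigma_{tt},\sigma_{nt})\mapsto\sum_i\big(\int_e(\sigma_{tt}\bm t+\sigma_{nt}\bm n)\cdot\psi_i\big)z_i$; and set $\widetilde\Upsilon_f:\bm p\mapsto\sum_i(\int_f\bm p\cdot\psi_i)z_i$. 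Then $\widetilde\Upsilon_\tau\circ\tr_{\sigma\to\tau}=\Upsilon_\tau$ by inspection of the trace formulas, which is routine.

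The substantive part is the exactness of the three bubble complexes. For the face bubble complex I expect it to take the form
\begin{equation*}
\begin{tikzcd}
0 \ar[r] & \mathring P_k^{(2,1)}(f) \ar[r,"\hess"] & \mathring P_{k-2}^{(\cdot,\cdot)}(\rot,f;\mathbb S^2) \ar[r,"\rot"] & \mathring P_{k-3}^{(\cdot,-1)}(f)\otimes\mathbb R^2/\mathcal{RT}^2 \ar[r] & 0,
\end{tikzcd}
\end{equation*}
where the middle space consists of symmetric-matrix bubbles whose tangential-normal and tangential-tangential components vanish to appropriate order on $\partial f$, and the quotient is by $\ker\widetilde\Upsilon_f$; this is the bubble Hessian complex whose exactness is established in the Hu--Zhang construction, so I would cite \cite{hu2021conforming}. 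For the edge bubble complex, the relevant sequence is
\begin{equation*}
\begin{tikzcd}[column sep=large]
0 \ar[r] & \begin{pmatrix}\mathring P_k^{(2)}(e)\\ \mathring P_{k-1}^{(1)}(e)\end{pmatrix} \ar[r,"{\mathbf d(\partial_{\bm t\bm t},\partial_{\bm t})}"] & \begin{pmatrix}\mathring P_{k-2}^{(0)}(e)\\ \mathring P_{k-2}^{(0)}(e)\end{pmatrix}/\big(\ker\widetilde\Upsilon_e\big) \ar[r] & 0,
\end{tikzcd}
\end{equation*}
and exactness reduces to two one-variable facts: $\partial_{\bm t\bm t}:\mathring P_k^{(2)}(e)\to\mathring P_{k-2}^{(0)}(e)$ is an isomorphism (second antiderivative with two boundary conditions at each endpoint), and $\partial_{\bm t}:\mathring P_{k-1}^{(1)}(e)\to\mathring P_{k-2}^{(0)}(e)$ has image of codimension one given exactly by $\ker$ of $q\mapsto\int_e q$; matching dimensions against $\dim\ker\widetilde\Upsilon_e$ closes this. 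At the vertex, after the isomorphism $\mathbb J_B^2(x)\cong P_2(\Omega)$ the complex becomes the polynomial Hessian complex $P_2(\Omega)\xrightarrow{\hess} P_0(\Omega)\otimes\mathbb S^2\to 0$ with the source reduced modulo $\ker\widetilde\Upsilon_x\cong\mathcal{RT}^2\cong P_1(\Omega)$, i.e.\ $P_2(\Omega)/P_1(\Omega)\xrightarrow{\hess}\mathbb S^2\to0$, an isomorphism since $\hess$ kills exactly linear polynomials; I would spell out these dimension counts.

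Finally I would assemble these pieces: the localization gives condition (1), the inclusion $\mathcal Z=P_1(\Omega)\subset\bm A^0=\mathbf{Hm}\text{-type space}$ is clear since linear polynomials are globally smooth, and the three exactness statements give condition (2); the shape-function spaces are polynomial so all restrictions are well-defined, hence the hypotheses of the compatibility definition are met. The main obstacle I anticipate is correctly identifying the boundary-vanishing orders in the middle space of the face bubble complex (and thus the precise statement of which symmetric-matrix-bubble Hessian complex is exact) so that it lines up with what is quotiented out by $\widetilde\Upsilon_f$ and with the trace structure in \eqref{eq:fes-Hess2D}; this is a bookkeeping issue about which derivatives of $u$ survive in $\tr_{f\to e}$ versus $\tr_{f\to x}$, and it is where I would be most careful, whereas the vertex and edge levels are essentially elementary.
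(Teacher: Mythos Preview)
Your approach is the same as the paper's: define $\widetilde\Upsilon_\tau$ exactly as you do, then check exactness of the vertex, edge, and face bubble complexes. Two concrete points need correction.

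First, your claim that $\partial_{\bm t\bm t}:\mathring P_k^{(2)}(e)\to\mathring P_{k-2}^{(0)}(e)$ is an isomorphism is false: the domain has dimension $k-5$ and the target $k-3$. The map is injective, and its image is precisely $\{v\in\mathring P_{k-2}^{(0)}(e):\int_e vp=0\ \forall p\in P_1(e)\}$, of codimension $2$. Combined with the codimension-$1$ image of $\partial_{\bm t}:\mathring P_{k-1}^{(1)}(e)\to\mathring P_{k-2}^{(0)}(e)$, the cokernel of $\mathbf d(\partial_{\bm t\bm t},\partial_{\bm t})$ on the edge bubble spaces is the $3$-dimensional space $\begin{pmatrix}P_1(e)\\ \mathbb R\end{pmatrix}$, which matches the codimension of $\ker\widetilde\Upsilon_e$ in $B^1(e)$. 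So exactness still holds by dimension count, but not for the reason you give.

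Second, in the face bubble complex the last space carries no vertex or edge vanishing: the third column of \eqref{eq:fes-Hess2D} has no trace below $f$, so $B^2(f)=P_{k-3}(f)\otimes\mathbb R^2$ in full, and the complex is
\[
0\to\mathring P_k^{(2,1)}(f)\xrightarrow{\hess}\mathring P_{k-2}^{(0,-1)}(\rot,f;\mathbb S^2)\xrightarrow{\rot}P_{k-3}(f)\otimes\mathbb R^2/\mathcal{RT}^2(f)\to 0,
\]
whose exactness is in \cite{christiansen2018nodal} (not \cite{hu2021conforming}, which is the 3D Hessian). Your anticipated bookkeeping difficulty is thus resolved simply: the middle space has only $C^0$ vertex constraints and tangential edge constraints, and the last space has none.
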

\begin{proof}
 We first show that $\Upsilon_{\bullet}$ can be localized. Let 
 \begin{equation*}
     \widetilde{\Upsilon}_{f} : P_{k-3} \otimes \mathbb R^2 \to \mathcal{Z}, \quad\bm  v \mapsto \sum_{i=1}^3\big(\int_f\bm v \cdot \psi_i\big)z_i,
 \end{equation*}
 \begin{equation*}
     \widetilde{\Upsilon}_e :{\begin{pmatrix} P_{k-2}(e) \\ P_{k-2}(e)\end{pmatrix}}   \to \mathcal{Z},\quad (a_0,a_1) \mapsto \sum_{i=1}^3\big(\int_e a_0 (\psi_i\cdot \bm t) + a_1(\psi_i \cdot\bm n) \big)z_i,
 \end{equation*}
 and
 \begin{equation*}
     \widetilde{\Upsilon}_{x} : \mathbb J^2(x) \to \mathcal{Z},\quad a_0\oplus a_1\oplus a_2 \mapsto \sum_{i=1}^3\big((\psi_i (x) \cdot a_1) -\tfrac{1}{2} \div \psi_i(x) \cdot a_0\big)z_i.
 \end{equation*}
for each $x\in\mathcal{V},e\in\mathcal{E}$ and $f\in\mathcal{F}$. Therefore, the generalized currents can be localized. The bubble complex on each face $f\in\mathcal{F}$ is
\begin{equation*}
\begin{tikzcd}
0 \ar[r]&
 \mathring P_{k}^{(2,1)}(f)  
    \arrow[r, "\hess"] 
    & \mathring P_{k-2}^{(0,-1)}(\operatorname{rot},f;\mathbb S^2) \ar[r,"\rot"] & P_{k-3}(f)\otimes \mathbb R^2 / \mathcal{RT}^2(f) \ar[r] & 0,
\end{tikzcd}
\end{equation*}
which is exact (cf.~\cite{christiansen2018nodal}). Here the bubble space $P_{k-2}^{(0,-1)}(\operatorname{rot},f;\mathbb S^2):=\{\bm u\in \mathring P_{k-2}^{(0,-1)}(f)\otimes\mathbb S^2:\bm u\cdot\bm t=0 \text{ on each edge } \ e\subset \partial f\}$. The bubble complex on edge $e\in\mathcal{E}$ is 
\begin{equation*}
\begin{tikzcd}[column sep = large]
0 \ar[r]&
 {\begin{pmatrix} \mathring{P}_k^{(2)}(e) \\ \mathring{P}_{k-1}^{(1)}(e) \end{pmatrix}} 
    \arrow[r, "{\
    \mathbf{d}(\partial_{\bm t\bm t}, \partial_{\bm t})}"] 
    & 
 {\begin{pmatrix} \mathring{P}_{k-2}^{(0)}(e) \\ \mathring{P}_{k-2}^{(0)}(e)\end{pmatrix} / \begin{pmatrix} P_1(e) \\ \mathbb R \end{pmatrix}}  
    \ar[r] & 0,
\end{tikzcd}
\end{equation*}
and the bubble complex at each vertex $x\in\mathcal{V}$ is isomorphic to 
\begin{equation*}
\begin{tikzcd}
    0 \ar[r]& P_2(\Omega)/P_1(\Omega) \ar[r,"\hess"]& P_0(\Omega) \otimes \mathbb S^2 \ar[r]& 0.
\end{tikzcd}
\end{equation*}
which are exact.
\end{proof}
The global space complex is: 
\begin{equation}
\label{eq:hessiacomplex2d}
\begin{tikzcd}
	0 \ar[r] & \mathbf{Ar}_k \ar[r,"\hess"] & \mathbf{HZ}_{k-2}  \ar[r,"\rot"]& \mathbf{DG}_{k-3} \otimes \mathbb R^2\ar[r] & 0.
\end{tikzcd}
\end{equation}
Here, $\mathbf{HZ}_{k-2}$ is the space of Hu--Zhang element \cite{hu2015}:
$$ \mathbf{HZ}_{k-2} = \{ \bm \sigma: \bm \sigma|_f \in P_k(f) \otimes \mathbb S^2, \bm \sigma \in C^0(\mathcal V), \bm \sigma \bm n \in C^0(\mathcal E)\}.$$
As a consequence of Theorem \ref{thm:cohomology}, the cohomology of \eqref{eq:hessiacomplex2d} is isomorphic to $P_1(\Omega) \otimes \mathcal H_{dR} (\Omega).$

\subsection{Hu-Ma-Zhang divdiv+ complexes}

Let us finally consider a finite element divdiv complex with a slightly higher regularity by \cite{hu2021family}. The example is specially chosen to show that the proposed framework is also capable to handle some additional smoothness, which actually shares the similar philosophy of the Stokes complex discussed before. 

Consider the two-dimensional divdiv complex:
\begin{equation}\label{eq:complex-divdiv-2}\begin{tikzcd} 0 \ar[r,""] &  C^{\infty}(\Omega) \otimes \mathbb R^2 \ar[r,"\sym\curl"] & C^{\infty}(\Omega) \otimes \mathbb S^2 \ar[r,"\div\div"] &   C^{\infty}(\Omega)  \ar[r] &0.
\end{tikzcd}
\end{equation}
The generalized currents of the complex is discussed in \Cref{lem:currents-2D-divdiv}.

% Note that now $\mathcal{Z} = \mathcal{RT}^2= \{\bm a + b \bm x : \bm a \in\mathbb R^2 , b \in \mathbb R \}$, then we can introduce the following generalized currents.
% \begin{lemma}
% 	Let $\psi_1, \psi_2, \psi_3$ be a basis of $P_1(\Omega)$. Set 
%     \begin{equation*}
%         \Upsilon_{x}: C^{\infty} \otimes \mathbb R^2 \to \mathcal Z,\quad \Upsilon_x(\bm u) = \sum_{i=1}^3\big((\bm u \cdot \nabla \psi_i)(x) - \tfrac{1}{2} (\div \bm u \psi_i)(x)\big)z_i,
%     \end{equation*}
%     \begin{equation*}
%         \Upsilon_{e}: C^{\infty} \otimes \mathbb S^2 \to \mathcal Z, \quad \Upsilon_e(\bm \sigma) = \sum_{i=1}^3\big(\int_e (\bm \sigma \bm n \cdot \nabla \psi_i) - (\div \bm \sigma\cdot \bm n)\psi_i)\big)z_i,
%     \end{equation*}
%     \begin{equation*}
%         \Upsilon_{f}: C^{\infty} \to \mathcal Z,\quad \Upsilon_f(q) = \sum_{i=1}^3\big(\int_f  q \psi_i\big)z_i,
%     \end{equation*}
% for each $x\in\mathcal{V},e\in\mathcal{E}$ and $f\in\mathcal{F}$. Then, $(\Upsilon_{\bullet},\mathcal Z)$ is the generalized current.
% \end{lemma}
% \begin{proof}
%     See \cite[Lemma 4.2 and 4.4]{hu2025distributional}.
% \end{proof}
% \begin{remark}
% The generalized currents were used to construct distributional Hessian complex, as described in \cite{hu2025distributional}. The construction comes from the duality of Hessian complex and divdiv complex.
% \end{remark}

We now introduce the following FECTS with $k\geq 4$:
\begin{equation}
\label{eq:fes-divdiv2D}
\begin{tikzcd}
    P_k(f) \otimes \mathbb R^2 \arrow[r, "\sym\curl"] \arrow[d, "{(\bm u|_e\cdot \bm n, \bm u|_e \cdot \bm t, (\div \bm u)|_e)}", swap]  \ar[dd, bend right=60, "", swap] & {P_{k-1}(f)} \otimes \mathbb S^2 \arrow[r, "\div\div"] \arrow[d, "{(\bm n^T \bm \sigma|_e \bm n, \bm t^T \bm \sigma|_e \bm n,(\div \bm \sigma)|_e \cdot \bm n)}"] \ar[dd, bend left=60, "{} "] & P_{k-3}(f)  \\
    {\begin{pmatrix} P_k(e) \\  P_{k}(e) \\ P_{k-1}(e) \end{pmatrix}} \arrow[r, "
    {\begin{pmatrix} \partial_{\bm t} ~  0 ~ 0 \\ 0 ~  \partial_{\bm t} ~  -\frac{1}{2} \\ 0 ~  0 ~ \frac{1}{2} \partial_{\bm t}\end{pmatrix}  }
    "] \arrow[d, ""] &  {\begin{pmatrix} P_{k-1}(e) \\ P_{k-1}(e) \\ P_{k-2}(e) \end{pmatrix}} \arrow[d, ""] &   \\
    \mathbb J^1(x) \otimes \mathbb R^2 \arrow[r, "\sym \curl"] & \mathbb J^0(x) \otimes \mathbb M^2 & 
\end{tikzcd}
\end{equation}

{
We first specify the trace structures from faces to vertices: At the first column, the mapping $P_k(f)\otimes\mathbb R^2 \to \mathbb J^1(x)\otimes\mathbb R^2$ is defined as $\bm u \mapsto \bm u(x) \oplus \nabla \bm u(x) $; at the second column, the mapping $P_{k-1}(f)\otimes\mathbb S^2 \to \mathbb J^0 (x) \otimes \mathbb M^2$ is defined as $\bm v \mapsto \bm v(x)$. Then we specify the trace structures from the edges to vertices: The mapping at the first column $ {\begin{pmatrix} P_k(e) \\ P_{k}(e) \\ P_{k-1}(e)\end{pmatrix}} \to \mathbb J^1(x)\otimes\mathbb R^2 $ is $(a_0,a_1,a_2) \mapsto (a_0(x) \bm n + a_1(x) \bm t) \oplus ( \tfrac{d}{d\bm t}a_0(x)\bm n \bm n^T + \tfrac{d}{d\bm t}a_1(x) \bm t \bm t^T + a_2(x) \bm n \bm t^T ) $; the mapping at the second column $ {\begin{pmatrix} P_{k-1}(e) \\ P_{k-1}(e) \\ P_{k-2}(e)\end{pmatrix}} \to \mathbb J^0(x)\otimes\mathbb M^2 $ is $(a_0,a_1,a_2) \mapsto a_0(x)\bm n\bm n^T + a_1(x)\bm n\bm t^T$.
}

The operators on the edge are defined according to the following lemma, cf.~\cite{hu2021family}. 
\begin{lemma}
Suppose that $\bm \sigma = \sym \curl \bm u$, then it holds that
\begin{equation*}
    \bm n^T\bm \sigma \bm n = \partial_{\bm t}(\bm u \cdot \bm n),
\end{equation*}
\begin{equation*}
    \bm t^T \bm \sigma \bm n = \partial_{\bm t}(\bm u \cdot \bm t) - \frac{1}{2} \div \bm u,
\end{equation*}
\begin{equation*}
    \div \bm \sigma \cdot \bm n = \frac{1}{2} \partial_{\bm t}(\div \bm u).
\end{equation*}
	
\end{lemma}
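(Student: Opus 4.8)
The statement to prove is a pointwise identity relating the symmetric curl of a vector field $\bm u$ on an edge $e$ to tangential derivatives of the components of $\bm u$. The plan is a direct computation in a local orthonormal frame $(\bm t, \bm n)$ aligned with the edge $e$, exploiting that the $\sym\curl$ operator has an explicit matrix expression.

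\medskip

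\emph{Setup and notation.} Write $\bm u = (u_1, u_2)^T$ and recall that in two dimensions $\curl \bm u$ acts row-wise, so for a vector field $\bm u$ the matrix $\curl \bm u$ has rows $(-\partial_y u_i, \partial_x u_i)$, i.e. $(\curl \bm u)_{ij} = (-\partial_y u_i, \partial_x u_i)_j$; then $\bm \sigma = \sym\curl \bm u = \tfrac12\big(\curl \bm u + (\curl \bm u)^T\big)$ is symmetric. The key step is to fix the edge $e$ and choose Cartesian coordinates $(s, r)$ so that $\bm t = \partial_s$ and $\bm n = \partial_r$ point along and across $e$; then $\partial_{\bm t} = \partial_s$. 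I would first rewrite $\curl \bm u$ in the $(\bm t, \bm n)$ frame: if $u_{\bm t} = \bm u \cdot \bm t$ and $u_{\bm n} = \bm u \cdot \bm n$ are the frame components, then $\curl \bm u$ expressed in this frame has entries built from $\partial_s$ and $\partial_r$ of $u_{\bm t}, u_{\bm n}$, and $\div \bm u = \partial_s u_{\bm t} + \partial_r u_{\bm n}$.

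\medskip

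\emph{The three identities.} With $\bm \sigma = \sym\curl \bm u$ written in the $(\bm t, \bm n)$ frame, I would read off:
\begin{itemize}
\item[-] $\bm n^T \bm \sigma \bm n = (\curl \bm u)_{\bm n \bm n}$, which equals $\partial_s u_{\bm n} \cdot$ (appropriate sign) — a short computation shows it equals $\partial_{\bm t}(\bm u \cdot \bm n)$, with no $\partial_r$ terms surviving.
\item[-] $\bm t^T \bm \sigma \bm n = \tfrac12\big((\curl\bm u)_{\bm t\bm n} + (\curl\bm u)_{\bm n \bm t}\big)$; expanding gives a combination of $\partial_s u_{\bm t}$, $\partial_r u_{\bm t}$, $\partial_r u_{\bm n}$ which I regroup as $\partial_{\bm t}(\bm u\cdot\bm t) - \tfrac12\div\bm u$ using $\div\bm u = \partial_s u_{\bm t} + \partial_r u_{\bm n}$.
\item[-] $\div\bm\sigma\cdot\bm n$: since $\bm\sigma = \sym\curl\bm u$, one has $\div\bm\sigma = \tfrac12\div\curl\bm u + \tfrac12\div(\curl\bm u)^T$. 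The first term vanishes because $\div\curl = 0$ row-wise; the second term, dotted with $\bm n$, produces $\tfrac12\,\bm n\cdot\nabla(\rot\bm u)$ or equivalently $\tfrac12\partial_{\bm t}(\div\bm u)$ after identifying $\partial_{\bm n}$-free part — here I would use the identity $\div(\curl\bm u)^T = \curl(\rot\bm u)$ (rotated gradient of $\rot\bm u$) together with $\rot\bm u$ relating to $\div$ under the frame change, or more directly expand in coordinates.
\end{itemize}
The cleanest route to the third identity is to note $2\,\bm\sigma = \curl\bm u + (\curl\bm u)^T$ so $2\div\bm\sigma = \div(\curl\bm u)^T = \nabla^\perp(\rot\bm u)$ where $\nabla^\perp = \curl$, hence $2\div\bm\sigma\cdot\bm n = \partial_{\bm t}(\rot\bm u)$; then verify $\rot\bm u$ and $\div\bm u$ agree appropriately on the edge trace — actually I expect $\rot\bm u$ should be replaced by $\div\bm u$ via the frame, giving the factor $\tfrac12$.

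\medskip

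\emph{Main obstacle.} The only genuinely delicate point is bookkeeping of signs and the distinction between $\partial_{\bm t}$-derivatives (which are intrinsic to the edge, hence legitimate data for a trace) and $\partial_{\bm n}$-derivatives (which must cancel). I would organize the computation so that after substituting $\bm\sigma = \sym\curl\bm u$ and expanding in the $(\bm t,\bm n)$ frame, every $\partial_{\bm n}$ term is visibly eliminated either by the symmetrization or by the combination with $\div\bm u$; the surviving terms are then manifestly tangential derivatives. This is a routine but sign-sensitive calculation; once the frame is fixed and $\curl\bm u$, $\sym\curl\bm u$, $\div\bm u$ are all written out in coordinates $(s,r)$, all three identities follow by inspection, and they are independent of the chosen coordinates since both sides are frame-covariant scalars built from $\bm u$, $\bm t$, $\bm n$.
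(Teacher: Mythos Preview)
Your overall strategy---choosing a local orthonormal frame $(\bm t,\bm n)$ and expanding $\sym\curl\bm u$ explicitly---is exactly right, and the paper itself does not prove this lemma at all (it simply cites \cite{hu2021family}), so there is nothing to compare against. The first two identities fall out immediately once you write $\bm\sigma$ in coordinates, as you indicate.

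However, your handling of the third identity contains a genuine slip, not merely a sign bookkeeping issue. You write $\div(\curl\bm u)^T = \nabla^\perp(\rot\bm u)$, but this is incorrect: the correct identity is
\[
\div(\curl\bm u)^T = \curl(\div\bm u),
\]
which you can check directly from $(\curl\bm u)^T = \begin{pmatrix} -\partial_y u_1 & -\partial_y u_2 \\ \partial_x u_1 & \partial_x u_2\end{pmatrix}$, whose row divergences are $-\partial_y(\div\bm u)$ and $\partial_x(\div\bm u)$. Your suspicion that ``$\rot\bm u$ should be replaced by $\div\bm u$'' is right, but the reason is not a frame change---$\rot\bm u$ and $\div\bm u$ are genuinely different scalars and do not agree on the edge. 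Once you use the correct identity, $2\,\div\bm\sigma\cdot\bm n = \curl(\div\bm u)\cdot\bm n = \partial_{\bm t}(\div\bm u)$ follows at once (using that $\curl f\cdot\bm n = \partial_{\bm t}f$ for a positively oriented frame), and the third identity is established. With this correction, your proof is complete.
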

\begin{proposition}
The FECTS \eqref{eq:fes-divdiv2D} is compatible with respect to the generalized currents with respect to divdiv complex\eqref{eq:complex-divdiv-2}.
\end{proposition}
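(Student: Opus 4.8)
The plan is to verify the two conditions in the definition of compatibility: the localization of the generalized currents through the trace structure, and the exactness of the bubble complexes on each simplex (vertex, edge, face). The localization part follows the same template as in the Hessian case above. I would first write down candidates for $\widetilde{\Upsilon}_{\bullet}$: since $\mathcal Z = \mathcal{RT}^2$ here, set $\widetilde{\Upsilon}_f : q \mapsto \sum_{i=1}^3 \big(\int_f q\psi_i\big)z_i$ on $P_{k-3}(f)$; on the edge space $\begin{pmatrix} P_{k-1}(e) \\ P_{k-1}(e) \\ P_{k-2}(e)\end{pmatrix}$ set $\widetilde{\Upsilon}_e(a_0,a_1,a_2) \mapsto \sum_{i=1}^3\big(\int_e (\operatorname{div}\bm\sigma\cdot\bm n)\psi_i - (\bm\sigma\bm n\cdot\nabla\psi_i)\big)z_i$ re-expressed in terms of the edge trace data $(a_0,a_1,a_2) = (\bm n^T\bm\sigma\bm n, \bm t^T\bm\sigma\bm n, \operatorname{div}\bm\sigma\cdot\bm n)$ — note $\bm\sigma\bm n = a_0\bm n + a_1\bm t$ on $e$, so the integrand is explicitly $a_2\psi_i\cdot\bm n - (a_0\bm n + a_1\bm t)\cdot\nabla\psi_i$, which is a genuine functional of the edge trace; at a vertex $x$, on $\mathbb J^1(x)\otimes\mathbb R^2$ set $\widetilde{\Upsilon}_x(a_0\oplus a_1)\mapsto \sum_{i=1}^3\big(\tfrac12(\psi_i\cdot\operatorname{tr} a_1)(x) - (a_0\cdot\nabla\psi_i)(x)\big)z_i$, matching $\Upsilon_x$ of \Cref{lem:currents-2D-divdiv} since $\operatorname{div}\bm u$ and $\bm u$ at $x$ are exactly the $1$-jet data. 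One then checks $\widetilde\Upsilon_\tau(\operatorname{tr}_{\sigma\to\tau}w) = \Upsilon_\tau(w)$ for each $\tau$ directly from these formulas and the definitions of the face-to-$\tau$ traces in \eqref{eq:fes-divdiv2D}; this is routine. Also $\mathcal Z = \mathcal{RT}^2 \subset P_k(f)\otimes\mathbb R^2$ elementwise with matching traces, so $\mathcal Z \subset \bm A^0$.

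Next I would identify the three bubble complexes and cite exactness. On a face $f$ the bubble complex should read
\begin{equation*}
\begin{tikzcd}[column sep = large]
0 \ar[r] & \mathring P_k^{(1,0)}(\operatorname{sym}\operatorname{curl},f;\mathbb R^2) \ar[r,"\sym\curl"] & \mathring P_{k-1}^{(0,-1)}(f)\otimes\mathbb S^2 \cap \ker\widetilde\Upsilon_f \ar[r,"\div\div"] & P_{k-3}(f)/\mathcal{RT}^2(f) \ar[r] & 0,
\end{tikzcd}
\end{equation*}
whose exactness is the content of the local divdiv bubble exactness in \cite{hu2021family}. On an edge $e$ the bubble complex is, writing the edge operator as the $3\times 3$ matrix in \eqref{eq:fes-divdiv2D},
\begin{equation*}
\begin{tikzcd}[column sep = large]
0 \ar[r] & \begin{pmatrix} \mathring P_k^{(1)}(e) \\ \mathring P_k^{(1)}(e) \\ \mathring P_{k-1}^{(0)}(e)\end{pmatrix} \ar[r] & \begin{pmatrix} \mathring P_{k-1}^{(0)}(e) \\ \mathring P_{k-1}^{(0)}(e) \\ \mathring P_{k-2}^{(0)}(e) \end{pmatrix} \Big/ W_e \ar[r] & 0,
\end{tikzcd}
\end{equation*}
where $W_e$ is the image of $\ker\widetilde\Upsilon_e$-type data — I would compute $W_e$ explicitly as a low-dimensional polynomial subspace and check injectivity of the edge operator on the bubble domain plus the dimension count, using the triangular structure of the $3\times 3$ matrix (the first and third rows are just $\partial_{\bm t}$ and $\tfrac12\partial_{\bm t}$ on bubbles, hence injective with one-dimensional cokernel each; the coupling term $-\tfrac12 a_2$ in the second row is a compact perturbation that a direct argument handles). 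At a vertex $x$, the bubble complex is isomorphic to
\begin{equation*}
\begin{tikzcd}
0 \ar[r] & P_1(\Omega)\otimes\mathbb R^2 \big/ \mathcal{RT}^2 \ar[r,"\sym\curl"] & P_0(\Omega)\otimes\mathbb M^2 \cap \ker\widetilde\Upsilon_x \ar[r] & 0,
\end{tikzcd}
\end{equation*}
and exactness here is finite-dimensional linear algebra: $\ker(\operatorname{sym}\operatorname{curl})$ on $P_1\otimes\mathbb R^2$ is exactly $\mathcal{RT}^2$, and the image of $\operatorname{sym}\operatorname{curl}$ on $P_1\otimes\mathbb R^2$ has dimension $6 - 3 = 3$, which I must check coincides with the $3$-dimensional subspace $P_0\otimes\mathbb M^2 \cap \ker\widetilde\Upsilon_x$ — i.e. verify that the constant matrices $\operatorname{sym}\operatorname{curl}$ of affine vector fields are precisely the constants annihilated by the three functionals defining $\widetilde\Upsilon_x$.

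I expect the main obstacle to be the edge bubble complex: getting the quotient space $W_e$ right so that the sequence is genuinely exact requires correctly tracking which vertex-trace data at the two endpoints are already ``used up'' by $\widetilde\Upsilon_e$ and by the differential relations imposed by the $3\times 3$ operator, and the off-diagonal $-\tfrac12\operatorname{id}$ entry couples the second and third components in a way that is easy to mis-handle. The standard fix is to triangularize: solve the third component first ($\tfrac12\partial_{\bm t} a_2 = c_3$ determines $a_2$ up to a constant on the edge bubble space), substitute into the second equation, and reduce to a pure-$\partial_{\bm t}$ problem; the dimension bookkeeping then matches the geometric decomposition assumption, which must have been arranged for \eqref{eq:fes-divdiv2D} to define a valid FECTS. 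The face and vertex parts, by contrast, are either citable verbatim from \cite{hu2021family} or reduce to a short finite-dimensional rank computation.
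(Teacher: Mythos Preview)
Your overall strategy---localize the currents, then verify exactness of the vertex, edge, and face bubble complexes---is exactly the paper's approach, and your identification of the edge complex as the delicate part is correct. However, you systematically misplace the $\ker\widetilde{\Upsilon}_{\tau}$ constraint. By the definition of the bubble complex \eqref{eq:bubble_complex}, the intersection with $\ker\widetilde{\Upsilon}_{\tau}$ occurs at index $k=\dim\tau$, not elsewhere. So at a vertex ($\dim\tau=0$) the constraint belongs on $B^0(x)$, not $B^1(x)$; the vertex bubble complex is $0\to P_1(\Omega)\otimes\mathbb R^2/\mathcal{RT}^2\xrightarrow{\sym\curl}P_0(\Omega)\otimes\mathbb S^2\to 0$, with no $\ker\widetilde{\Upsilon}_x$ on the target (note also the target is $\mathbb S^2$, not $\mathbb M^2$, since $B^1(x)$ is the \emph{image} of the face-to-vertex trace of symmetric tensors). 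At a face ($\dim\tau=2$) the constraint belongs on $B^2(f)$, giving the last term $P_{k-3}(f)/P_1(f)$ (scalars modulo linears, not $\mathcal{RT}^2(f)$), and there is no $\ker\widetilde{\Upsilon}_f$ on the middle $\mathbb S^2$-valued space.

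There is also a bookkeeping error in your edge bubble complex: the edge-to-vertex trace at index $1$ only reads off $a_0(x)$ and $a_1(x)$, so the third component $a_2$ carries no vertex constraint and should be $P_{k-2}(e)$, not $\mathring P_{k-2}^{(0)}(e)$. The paper identifies the quotient $W_e$ explicitly as $\bigl\{(\alpha,\partial_{\bm t}b,-b):\alpha\in\mathbb R,\ b\in P_1(e)\bigr\}$, which is three-dimensional and matches $\dim\mathcal Z$; exactness then follows from your triangularization idea plus dimension count. Finally, in your $\widetilde{\Upsilon}_e$ formula, the test functions $\psi_i$ are scalars (a basis of $P_1(\Omega)$), so the expression ``$a_2\psi_i\cdot\bm n$'' should read simply $a_2\psi_i$. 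Once these placement and space identifications are corrected, your proof goes through as in the paper.
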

\begin{proof}
We first show that $\Upsilon_{\bullet}$ can be localized. Let
\begin{equation*}
    \widetilde{\Upsilon}_{x} : (a_0, a_1) \mapsto \sum_{i=1}^3\big((a_0 \cdot \nabla \psi_i)(x) - \frac{1}{2} (\operatorname{Tr}( a_1) \psi_i)(x)\big)z_i.
\end{equation*}
\begin{equation*}
    \widetilde{\Upsilon}_e :{\begin{pmatrix} {P}_{k-1}(e) \\ {P}_{k-1}(e)\\ P_{k-2}(e) \end{pmatrix}}   \to \mathbb R^3, \quad (a_0,a_1,a_2) \mapsto\sum_{i=1}^3\big( \int_e a_0 (\nabla \psi_i\cdot \bm n) + a_1(\nabla \psi_i \cdot\bm t) -  a_2 \psi_i\big)z_i,
\end{equation*}
\begin{equation*}
    \widetilde{\Upsilon}_{f} : P_{k-3}(f) \to \mathcal Z,\quad \widetilde{\Upsilon}_f(q) = \sum_{i=1}^3\big(\int_f  q \psi_i\big)z_i
\end{equation*}
for each $x\in\mathcal{V},e\in\mathcal{E}$ and $f\in\mathcal{F}$. Thus, the generalized currents can be localized.

The bubble complex in face $f$ is
\begin{equation*}
\begin{tikzcd}
0 \ar[r]&
 \mathring P_{k}^{(1,0)\cap\operatorname{div}}(f;\mathbb R^2)  
    \arrow[r, "\sym\curl"] 
    &\mathring P_{k-1}^{(0,-1)\cap\operatorname{div}}(\operatorname{divdiv},f;\mathbb S^2) \ar[r,"\div\div"] & P_{k-3}(f) / P_1(f) \ar[r] & 0.
\end{tikzcd}
\end{equation*}
which is exact (cf.~\cite{hu2021family}). Here the bubble space $\mathring P_{k}^{(1,0)\cap\operatorname{div}}(f;\mathbb R^2)  := \{\bm u\in P_{k}^{(1,0)}(f)\otimes\mathbb R^2:\operatorname{div}\bm u\in P_{k-1}^{(0,0)}(f)\}$ and the space $\mathring P_{k-1}^{(0,-1)\cap\operatorname{div}}(\operatorname{divdiv},f;\mathbb S^2):=\{\bm \sigma\in P_{k-1}^{(0,-1)}\otimes \mathbb S^2:\bm \sigma\bm n =0,\operatorname{div}\bm\sigma\cdot\bm n=0\text{ on }\partial f\}$. 
The bubble space on edge $e\in \mathcal{E}$ is 
\begin{equation}
\begin{tikzcd}[column sep = large]
0\ar[r]&
 {\begin{pmatrix} \mathring{P}_k^{(1)}(e) \\  \mathring{P}_{k}^{(1)}(e) \\ \mathring{P}_{k-1}^{(0)}(e) \end{pmatrix}
 }
  \ar[r,"
    {\begin{pmatrix} \partial_{t} ~  0 ~ 0 \\ 0 ~  \partial_{\bm t} ~  -\tfrac{1}{2} \\ 0 ~  0 ~ \frac{1}{2} \partial_{\bm t}\end{pmatrix}  }
    "]
   &
 {\begin{pmatrix} \mathring{P}_{k-1}^{(0)}(e)\\ \mathring{P}_{k-1}^{(0)}(e) \\ P_{k-2}(e)\end{pmatrix} / \left\{ \begin{pmatrix}
 \mathbb R \\ \partial_{\bm t}b \\ -b
\end{pmatrix}:b \in P_1(e) \right\}
}
 \ar[r]&0.
\end{tikzcd}
\end{equation}
The quotient space here is defined as
\begin{align*}
    {\begin{pmatrix} \mathring{P}_{k-1}^{(0)}(e)\\ \mathring{P}_{k-1}^{(0)}(e) \\ P_{k-2}(e)\end{pmatrix} / \{ \begin{pmatrix}
 \mathbb 
R\\ \partial_{\bm t}b \\ -b
\end{pmatrix}}:& b \in P_1(e) \} = \{(u, v,w)^T\in (\mathring{P}_{k-1}^{(0)}(e), \mathring{P}_{k-1}^{(0)}(e), P_{k-2}(e))^T\\ &:\int_e (ua + v\partial_{\bm t}b-wb)\mathrm{d}x=0,\text{ for all }b\in P_1(e),a\in P_0(e)\}.
 \end{align*}
To verify that the sequence forms a complex, it is sufficient to perform integration by parts. When combined with dimension counting, it can be checked that the edge bubble complex is exact. Finally, the bubble complex at vertex $x\in\mathcal{V}$ is isomorphic to 
\begin{equation*}
    \begin{tikzcd}
        0 \ar[r]& P_1(\Omega) \otimes\mathbb R^2/ \mathcal{RT}^2\ar[r,"\sym\curl"] & P_0(\Omega) \otimes \mathbb S^2 \ar[r] &0,
    \end{tikzcd}
\end{equation*}
which is also exact.
\end{proof}
The corresponding global space complex is 

\begin{equation}
\label{eq:divdivcomplex2d}
\begin{tikzcd}
	0 \ar[r] & \mathbf{Hm}^{\div}_k \ar[r,"\sym\curl"] & \mathbf{HMZ}_{k-1}  \ar[r,"\div\div"]& \mathbf{DG}_{k-3}\ar[r] & 0.
\end{tikzcd}
\end{equation}
Here,
$ \mathbf{Hm}^{\div}_k = \{ \bm u: \bm u \in P_k(\Omega) \otimes \mathbb R^2, \bm u \in C^1(\mathcal V), \bm u,\div \bm u \in C^0(\mathcal E)\}$ has $H^1(\div)$ global continuity, 
and $\mathbf{HMZ}_{k-1}$ is the space of Hu-Ma-Zhang $H(\div) \cap H(\div\div)$ element:
\begin{equation*}
\mathbf{HMZ}_{k-1}: = \{ \bm \sigma : \bm \sigma \in P_{k-1}(\Omega) \otimes \mathbb S^2 : \bm \sigma \in C^0(\mathcal V), \bm \sigma \bm n , \div \bm \sigma \cdot \bm n\in C^0(\mathcal E)\}
\end{equation*}
As a consequence of Theorem \ref{thm:cohomology}, the cohomology of \eqref{eq:divdivcomplex2d} is isomorphic to $\mathcal{RT}^2 \otimes \mathcal H_{dR}(\Omega)$.  
\section{Finite Element Complexes with Trace Structure: 3D Case }
\label{sec:3d}

This subsection deals with complexes in three-dimensional spaces. Note that for the examples presented herein, the exactness of all cell bubble complexes had been proven at the time when the finite element complexes were proposed. Consequently, we will always omit the discussion of the cell complexes hereinafter. 
Moreover, some face bubbles were introduced as an intermediate step of the construction. However, some of them are not suitable for deriving compatibility conditions. Therefore, we present the bubble complexes that can be produced by the compatibility conditions, without providing detailed proofs. 
The edge bubble complex appears to be rarely discussed in the literature. Thus, we introduce them in detail, and their exactness can be easily verified through dimension counting.

We first introduce some standard differential operators: Given a scalar function $u$ and vector function $\bm q = (q_1,q_2,q_3)^T$, the differential operators are defined by
\begin{equation*}
    \operatorname{grad}u =\nabla u = (\partial_x u,\partial_y y,\partial_z z)^T,
\end{equation*}
\begin{equation*}
    \operatorname{curl}\bm q = \nabla\times\bm q = (\partial_yq_3-\partial_zq_2,\partial_zq_1-\partial_xq_3,\partial_xq_2-\partial_yq_1)^T,
\end{equation*}
\begin{equation*}
    \operatorname{div}\bm q =\nabla\cdot\bm q = \partial_xq_1+\partial_yq_2+\partial_zq_3.
\end{equation*}
For vector and tensor function, these differential operators act row-wise. We can also define the hessian operator
\begin{equation*}
    \hess u =\nabla^2u =\begin{pmatrix}
        \partial_{xx}u ~ \partial_{xy}u ~ \partial_{xz}u\\
        \partial_{xy}u ~\partial_{yy}u ~\partial_{yz}u\\
        \partial_{zy}u ~\partial_{zy}u ~\partial_{zz}u
    \end{pmatrix}   .
\end{equation*}

In this subsection, we need to discuss differential operators on surfaces: for a given face $f$, we use $\bm t_1, \bm t_2$ as its two orthogonal unit tangential vectors, let $\bm n$ be its unit normal vectors such that $\bm t_1 \times \bm t_2 = \bm n$. Let $\Pi_f(\bm u) = \begin{pmatrix} \bm u\cdot \bm t_1 \\ \bm u \cdot \bm t_2\end{pmatrix}$ be the projection to the face tangential space. Given a scalar function $u$ and vector function $\bm q =(q_1,q_2)^T$, we can then define the surface differential operator on face $f$ by
\begin{align*}
    \operatorname{grad}_f u =(\partial_{\bm t_1}u,\partial_{\bm t_2}u)^T,&\quad \operatorname{curl}_f u = (-\partial_{\bm t_2}u,\partial_{\bm t_1}u)^T\\
    \operatorname{div}_f \bm q = \partial_{\bm t_1}q_1 + \partial_{\bm t_2}q_2,&\quad \operatorname{rot}_f \bm q = \partial_{\bm t_1}q_2 - \partial_{\bm t_2}q_1.
\end{align*}
and 
\begin{align*}
        \hess_f u =\nabla_f^2u =\begin{pmatrix}
        \partial_{\bm t_1\bm t_1}u ~ \partial_{\bm t_1\bm t_2}u\\
        \partial_{\bm t_1\bm t_2}u ~\partial_{\bm t_2\bm t_2}u
    \end{pmatrix} . 
\end{align*}
For vector and tensor function, the surface differential operators act row-wise.

For a given edge, we use $\bm t$ to denote its unit tangential vector, $\bm n_1, \bm n_2$ to denote its two orthogonal unit normal vectors, such that $\bm n_1 \times \bm n_2 = \bm t$. Let $N_e(\bm u) = \begin{pmatrix} \bm u\cdot \bm n_1 \\ \bm u \cdot \bm n_2\end{pmatrix}$ be the projection to the edge normal space. For the matrix case, $\Pi_f, N_e$ on the left (right, resp.) denotes the projection rowwise (columnwise, resp.). 
When considering a face $f$ and one of its edge $e$, we still use $\bm t$ to denote the tangential vector of $e$ and $\bm n_{f,e} = \bm t \times \bm n$ to denote the edge-face normal vector. 

Finally, denote by $\{\bm e_1,\bm e_2,\bm e_3\}$ the standard basis of $\mathbb R^3$. Then given a two-dimensional vector $\bm v = (v_1,v_2)^T$, we can define the extension operator $E_3(\bm v)=v_1\bm e_1 +v_2\bm e_2$, the definition of $E_3$ can be extended naturally to high order tensor on two-dimensional face. Namely, $E_3$ induces a mapping from $(\mathbb R^2)^{\otimes n}$ to $(\mathbb R^3)^{\otimes n}$.
     
\subsection{Hessian Complexes}
This subsection focuses on the hessian complex in three dimensions:
	\begin{equation}\label{eq:complex:hess-3}
    \begin{tikzcd} 0 \ar[r,""] &  C^{\infty}(\Omega)\ar[r,"\hess"] & C^{\infty}(\Omega) \otimes \mathbb S^3  \ar[r,"\curl"] & C^{\infty}(\Omega) \otimes \mathbb T^3 \ar[r,"\div"] &   C^{\infty}(\Omega) \otimes \mathbb R^3 \ar[r] &0.\end{tikzcd}\end{equation}
The generalized currents of this complex is discussed in \Cref{lem:currents-3d-hessian}. 

% We first introduce the generalized currents for Hessian complex in three dimensions.
% \begin{lemma}[Generalized currents for 3D Hessian]
% Note that $\mathcal Z = P_1$, let $\psi_i$ be a basis of $\mathcal{RT}^3 = \{\bm a + b \bm x : \bm a \in\mathbb R^3 , b \in \mathbb R \}$. Set 
% \begin{equation*}
%     \Upsilon_x: C^{\infty}(\Omega)  \to \mathcal Z, \quad \Upsilon_x(u)  =\sum_{i=1}^4\big( (\psi_i \cdot \nabla u)(x) - \tfrac{1}{3}(\div \psi_i \cdot u)(x)\big)z_i,
% \end{equation*}
% \begin{equation*}
%     \Upsilon_e: C^{\infty}(\Omega) \otimes \mathbb S ^3\to \mathcal Z, \quad\Upsilon_e(\bm \sigma) = \sum_{i=1}^4\big(\int_{e} (\bm \sigma \bm t) \cdot \psi_i\big)z_i,
% \end{equation*}
% \begin{equation*}
%     \Upsilon_f: C^{\infty}(\Omega) \otimes \mathbb T^3  \to \mathcal Z,\quad \Upsilon_f(\bm \tau) = \sum_{i=1}^4\big(\int_{f}(\bm \tau \bm n) \cdot \psi_i\big)z_i,
% \end{equation*}
% \begin{equation*}
%     \Upsilon_T: C^{\infty}(\Omega) \otimes \mathbb R^3  \to \mathcal Z, \quad\Upsilon_T(\bm q) = \sum_{i=1}^4\big(\int_{T} \bm q \cdot \psi_i\big)z_i.
% \end{equation*}
% for any $x\in\mathcal{V},e\in\mathcal{E},f\in\mathcal{F}$ and $T\in\mathcal{K}$. Then, $(\Upsilon_{\bullet},\mathcal Z)$ is the generalized currents.
% \end{lemma}
% \begin{proof}
%     Integration by parts.
% \end{proof}

We now introduce the following FECTS with $k\geq 9$, which is modified from \cite{hu2021conforming}. 
\begin{equation}
\label{eq:fes-Hess3D}
\begin{tikzcd}
    P_k(T) \arrow[r, "\hess"] \arrow[d, "{(u|_f,(\partial_{\bm n}u)|_f)}"]  & {P_{k-2}(T)} \otimes \mathbb S^3 \arrow[r, "\curl"] \arrow[d, "{ (\Pi_f \bm \sigma|_f  \Pi_f, \bm n^T(\Pi_f \bm \sigma|_f) )}"]  & P_{k-3}(T) \otimes \mathbb T^3 \ar[r,"\div"] \arrow[d, "{(\Pi_f (\bm \tau|_f \bm n), \bm n^T \bm \tau|_f \bm n)}"] & P_{k-4}(T) \otimes \mathbb R^3 \arrow[ddd, "{\bm v(x)}"]
    \\
    {\begin{pmatrix} P_k(f) \\ P_{k-1}(f) \end{pmatrix}} 
    \arrow[r, "{\mathbf{d}(\hess_f, \grad_f)}"] 
    \arrow[d, "{}"] 
    &  
    {\begin{pmatrix} P_{k-2}(f) \otimes \mathbb S^2\\ P_{k-2}(f) \otimes \mathbb R^2 \end{pmatrix}}  
    \arrow[d, ""] 
    \ar[r,"{{\mathbf{d}(\rot_f, \rot_f)}}"] &  
     {\begin{pmatrix} P_{k-3}(f) \otimes \mathbb R^2\\ P_{k-3}(f) \otimes \mathbb R \end{pmatrix}} \ar[dd,""]  \\
    {\begin{pmatrix} P_k(e) \\ P_{k-1}(e) \otimes \mathbb R^2 \\ P_{k-2}(e) \otimes \mathbb M^2\end{pmatrix}} \ar[r,"{\mathbf{d}(\partial_{\bm t\bm t},\partial_{\bm t},\operatorname{id})}"] \ar[d] &  {\begin{pmatrix} P_{k-2}(e) \\ P_{k-2}(e) \otimes \mathbb R^2 \\ P_{k-2}(e) \otimes \mathbb M^2\end{pmatrix}}\ar[d] & &\\
    \mathbb J^4(x) \arrow[r, "\hess"] & \mathbb J^2(x) \otimes \mathbb M^3 \arrow[r, "\curl"] & \mathbb J^1(x) \otimes \mathbb M^3 \arrow[r, "\div"]& \mathbb J^0(x) \otimes \mathbb R^3
\end{tikzcd}
\end{equation}
{We specify these mappings as follows. For convenience, we use $(I,J)$ to denote the entry with $I$-th row and $J$-th column.}

Here the other mappings in the first column are:
\begin{itemize}
    \item[-] (1,1) to (3,1) : from $P_k(T)$ to ${\begin{pmatrix} P_k(e) \\ P_{k-1}(e) \otimes \mathbb R^2 \\ P_{k-2}(e) \otimes \mathbb M^2\end{pmatrix}}$ is $ u\mapsto
    \begin{pmatrix} u|_e \\ [(\partial_{\bm n_i} u)|_e]_i \\  [(\partial_{\bm n_{i}\bm n_{j}}u)|_e]_{ij} \end{pmatrix}$ with $i,j=1,2$;
    \item[-] (1,1) to (4,1): from $P_k(T)$ to $\mathbb{J}^4(x)$ is $u\mapsto u(x)\oplus\nabla u(x) \oplus\nabla^2 u(x) \oplus\nabla^3u(x)\oplus \nabla^4u(x)$;
        \item[-] (2,1) to (3,1): from $\begin{pmatrix} P_k(f) \\ P_{k-1}(f) \end{pmatrix}$ to ${\begin{pmatrix} P_k(e) \\ P_{k-1}(e) \otimes \mathbb R^2 \\ P_{k-2}(e) \otimes \mathbb M^2\end{pmatrix}}$ is 
        \begin{equation*}
            (a_0,a_1) \mapsto  \begin{pmatrix} a_0|_e \\ \tfrac{\partial a_0}{\partial \bm n_{f,e}}|_e\Pi_f(\bm t)+ a_1|_e  \Pi_f(\bm n_{f,e}) \\ \tfrac{\partial^2 a_0}{\partial \bm n_{f,e}^2}|_e\Pi_f(\bm t)\Pi_f(\bm  n_{f,e})^T+ \tfrac{\partial a_1}{\partial \bm n_{f,e}}|_e  \Pi_f(\bm n_{f,e})\Pi_f(\bm n_{f,e})^T\end{pmatrix} ;
        \end{equation*}
        \item[-] (2,1) to (4,1): from $\begin{pmatrix} P_k(f) \\ P_{k-1}(f) \end{pmatrix}$ to $\mathbb J^4(x)$ is $(a_0,a_1)\mapsto a_0(x)\oplus (E_3\nabla_fa_0(x) + a_1(x)\bm e_3)\oplus (E_3\nabla_f^2a_0(x)+E_3\nabla_fa_1(x)\otimes\bm e_3)\oplus (E_3\nabla_f^3a_0(x)+E_3\nabla_f^2a_1(x)\otimes\bm e_3)\oplus (E_3\nabla_f^4a_0(x)+E_3\nabla_f^3a_1(x)\otimes\bm e_3)$;
        \item[-] (3,1) to (4,1): from ${\begin{pmatrix} P_k(e) \\ P_{k-1}(e) \otimes \mathbb R^2 \\ P_{k-2}(e) \otimes \mathbb M^2\end{pmatrix}}$ to $\mathbb J^4(x)$ is $(a_0,a_1,a_2)\mapsto a_0(x) \oplus (\tfrac{d}{d\bm t}a_0(x)\bm e_3+ E_3a_1(x))\oplus (\tfrac{d^2}{d\bm t^2}a_0(x) \bm e_3^{\otimes 2} + E_3\tfrac{d}{d\bm t}a_1(x)\otimes \bm e_3 + E_3a_2(x) ) \oplus (\tfrac{d^3}{d\bm t^3}a_0(x) \bm e_3^{\otimes 3} + E_3\tfrac{d^2}{d\bm t^2}a_1(x)\otimes \bm e_3^{\otimes 2} + E_3\tfrac{d}{d\bm t}a_2(x)\otimes \bm e_3 )  \oplus (\tfrac{d^4}{d\bm t^4}a_0(x) \bm e_3^{\otimes 4} + E_3\tfrac{d^3}{d\bm t^3}a_1(x)\otimes \bm e_3^{\otimes 3} + E_3\tfrac{d^2}{d\bm t^2}a_2(x) \otimes \bm e_3^{\otimes 2})$, here $\bm e_3^{\otimes k}$ represents the $k$-th tensor power of the vector $\bm e_3$.
\end{itemize}
The other mappings in the second column are:
\begin{itemize}
    \item[-] (1,2) to (3,2): from $P_{k-2}(T)\otimes\mathbb S^3$ to $\begin{pmatrix} P_{k-2}(e) \\ P_{k-2}(e) \otimes \mathbb R^2 \\ P_{k-2}(e) \otimes \mathbb M^2\end{pmatrix}$ is $\bm \sigma\mapsto \begin{pmatrix} \bm t^T \bm \sigma|
    _e\bm t \\  [\bm n_i^T \bm \sigma|_e \bm t]_i \\  [\bm n_i^T \bm \sigma|_e\bm n_j]_{ij}\end{pmatrix}$ with $i,j=1,2$;
    \item [-] (1,2) to (4,2): from $P_{k-2}(T)\otimes\mathbb S^3$ to $\mathbb J^2(x)\otimes\mathbb M^3$ is $\bm\sigma\mapsto \bm\sigma(x)\oplus\nabla\bm\sigma(x)\oplus\nabla^2\bm\sigma(x)$;
    \item[-] (2,2) to (3,2): from $\begin{pmatrix} P_{k-2}(f) \otimes \mathbb S^2\\ P_{k-2}(f) \otimes \mathbb R^2 \end{pmatrix}$ to $\begin{pmatrix} P_{k-2}(e) \\ P_{k-2}(e) \otimes \mathbb R^2 \\ P_{k-2}(e) \otimes \mathbb M^2\end{pmatrix}$ is 
    \begin{equation*}
        (a_0,a_1) \mapsto  \begin{pmatrix}  \Pi_f(\bm t)^T a_0|_e \Pi_f(\bm t) \\ (\Pi_f(\bm t)^T a_0|_e \Pi_f(\bm n_{f,e}))\Pi_f(\bm t) +  (\Pi_f(\bm t)^Ta_1|_e)\Pi_f(\bm n_{f,e}) \\ (\Pi_f(\bm n_{f,e})^T a_0|_e \Pi_f(\bm n_{f,e}))\Pi_f(\bm t)\Pi_f(\bm n_{f,e})^T +  (\Pi_f(\bm n_{f,e})^Ta_1|_e)\Pi_f(\bm n_{f,e}) \Pi_f(\bm n_{f,e})^T \end{pmatrix};
    \end{equation*}

    \item [-] (2,2) to (4,2): from $\begin{pmatrix} P_{k-2}(f) \otimes \mathbb S^2\\ P_{k-2}(f) \otimes \mathbb R^2 \end{pmatrix}$  to $\mathbb{J}^2(x)\otimes\mathbb M^3$ is  $(a_0,a_1)\mapsto (E_3a_0(x)+E_3a_1(x)\otimes\bm e_3)\oplus (E_3\nabla_fa_0(x)+E_3\nabla_fa_1(x)\otimes\bm e_3)\oplus (E_3\nabla_f^2a_0(x)+E_3\nabla_f^2a_1(x)\otimes\bm e_3)$;
    \item [-] (3,2) to (4,2): from $\begin{pmatrix} P_{k-2}(e) \\ P_{k-2}(e) \otimes \mathbb R^2 \\ P_{k-2}(e) \otimes \mathbb M^2\end{pmatrix}$ to $\mathbb{J}^2(x)\otimes\mathbb M^3$ is  $(a_0,a_1,a_2)\mapsto (a_0(x) \bm e_3^{\otimes 2} + E_3a_1(x)\otimes \bm e_3 + E_3a_2(x) ) \oplus (\tfrac{d}{d\bm t}a_0(x) \bm e_3^{\otimes 3} + E_3\tfrac{d}{d\bm t}a_1(x)\otimes \bm e_3^{\otimes 2} + E_3\tfrac{d}{d\bm t}a_2(x)\otimes \bm e_3 )  \oplus (\tfrac{d^2}{d\bm t^2}a_0(x) \bm e_3^{\otimes 4} + E_3\tfrac{d^2}{d\bm t^2}a_1(x)\otimes \bm e_3^{\otimes 3} + E_3\tfrac{d^2}{d\bm t^2}a_2(x) \otimes \bm e_3^{\otimes 2})$
\end{itemize}
The hidden mappings in the third column are
\begin{itemize}
    \item [-] (1,3) to (4,3): from  $P_{k-3}(T)\otimes \mathbb T^3$ to $\mathbb J^1(x)\otimes\mathbb M^3$ is $\bm \tau \mapsto \bm \tau(x)\oplus\nabla\bm\tau(x)$;
    \item [-] (2,3) to (4,3): from$\begin{pmatrix} P_{k-3}(f) \otimes \mathbb R^2\\ P_{k-3}(f) \otimes \mathbb R \end{pmatrix}$ to $\mathbb J^1(x)\otimes\mathbb M^3$ is $(a_0,a_1)\mapsto (E_3a_0(x)\otimes \bm e_3 + a_1(x)\bm e_3^{\otimes 2})\oplus (E_3\nabla_fa_0(x)\otimes \bm e_3 + E_3\nabla_fa_1(x)\otimes \bm e_3^{\otimes 2})$.
\end{itemize}

\begin{proposition}
The FECTS \eqref{eq:fes-Hess3D} is compatible with respect to the generalized currents for Hessian complex \eqref{eq:complex:hess-3}.
\end{proposition}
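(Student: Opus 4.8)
The plan is to verify the two clauses of the compatibility definition: (1) the generalized currents of \Cref{lem:currents-3d-hessian} factor through the trace structure, i.e.\ $\Upsilon_\tau=\widetilde\Upsilon_\tau\circ\tr_{T\to\tau}$ for suitable $\widetilde\Upsilon_\tau$; and (2) $\mathcal Z=P_1(\Omega)$ lies in $\bm A^0$, and the bubble sequence \eqref{eq:bubble_complex} is an exact complex at every vertex, edge, face and cell.

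For (1), the guiding observation is that each $\Upsilon_\tau$ in \Cref{lem:currents-3d-hessian} depends only on the part of the jet that $\tr_{T\to\tau}$ already records, so the localized current is just $\Upsilon_\tau$ rewritten in those variables. At a vertex, $\Upsilon_x(u)$ depends only on $u(x)$ and $\nabla u(x)$, so I set $\widetilde\Upsilon_x:\mathbb J^4(x)\to\mathcal Z$, $a_0\oplus a_1\oplus a_2\oplus a_3\oplus a_4\mapsto\sum_i\big((\psi_i(x)\cdot a_1)-\tfrac13(\div\psi_i(x)\cdot a_0)\big)z_i$. On an edge $e$, expanding $\bm\sigma\bm t$ in the orthonormal frame $\{\bm t,\bm n_1,\bm n_2\}$ gives $(\bm\sigma\bm t)\cdot\psi_i=(\bm t^T\bm\sigma\bm t)(\bm t\cdot\psi_i)+\sum_j(\bm n_j^T\bm\sigma\bm t)(\bm n_j\cdot\psi_i)$, which uses only the first two components of the column-$2$ edge trace, so I set $\widetilde\Upsilon_e:(a_0,a_1,a_2)\mapsto\sum_i\big(\int_e a_0\,(\bm t\cdot\psi_i)+a_1\cdot N_e(\psi_i)\big)z_i$. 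On a face $f$, $\bm\tau\bm n=\Pi_f(\bm\tau\bm n)+(\bm n^T\bm\tau\bm n)\bm n$ gives $\widetilde\Upsilon_f:(a_0,a_1)\mapsto\sum_i\big(\int_f a_0\cdot\Pi_f(\psi_i)+a_1\,(\bm n\cdot\psi_i)\big)z_i$; on a cell one takes $\widetilde\Upsilon_T=\Upsilon_T$ restricted to $P_{k-4}(T)\otimes\mathbb R^3$. Then $\widetilde\Upsilon_\tau(\tr_{T\to\tau}w)=\Upsilon_\tau(w)$ follows from these decompositions together with the explicit trace maps written out after \eqref{eq:fes-Hess3D}.

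For (2), $P_1(\Omega)\subset\bm A^0$ is clear since linear polynomials are globally smooth. For the cell $T$, \eqref{eq:bubble_complex} is the full cell bubble Hessian complex; it is a complex because $\widetilde\Upsilon_T(\div\bm\tau)=\Upsilon_{\partial T}(\bm\tau)=0$ for a cell bubble $\bm\tau$ (all face traces vanish), and its exactness is the element's unisolvence established in the original construction, cf.\ \cite{hu2021conforming}. For a face $f$ the same boundary computation reduces \eqref{eq:bubble_complex} to the exactness of a face bubble Hessian complex, which follows by dimension counting as in \cite{hu2021conforming}. For a vertex $x$, the isomorphisms $\mathbb J_B^m(x)\cong P_m(\Omega)$ turn \eqref{eq:bubble_complex} into the truncated polynomial Hessian complex $0\to P_4(\Omega)\cap\ker\Upsilon_x\xrightarrow{\hess}P_2(\Omega)\otimes\mathbb S^3\xrightarrow{\curl}P_1(\Omega)\otimes\mathbb T^3\xrightarrow{\div}P_0(\Omega)\otimes\mathbb R^3\to0$; a short computation shows $\Upsilon_x$ restricts to a linear isomorphism $P_1(\Omega)\to\mathcal Z$, so $P_1(\Omega)\cap\ker\Upsilon_x=0$, and since $P_1(\Omega)=\ker(\hess|_{P_4})$ exactness reduces to that of the polynomial Hessian complex \cite{arnold2021complexes}. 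For an edge $e$, since $A^2(e)=A^3(e)=\{0\}$ the sequence collapses to $0\to B^0(e)\xrightarrow{\mathbf d(\partial_{\bm t\bm t},\partial_{\bm t},\operatorname{id})}B^1(e)\cap\ker\widetilde\Upsilon_e\to0$; reading off the edge-to-vertex traces gives $B^0(e)=\mathring P_k^{(4)}(e)\oplus(\mathring P_{k-1}^{(3)}(e)\otimes\mathbb R^2)\oplus(\mathring P_{k-2}^{(2)}(e)\otimes\mathbb M^2)$ and $B^1(e)=\mathring P_{k-2}^{(2)}(e)\otimes\mathbb R^7$; the displayed map is injective by back-substitution, its image lies in $\ker\widetilde\Upsilon_e$ (by Stokes, $\widetilde\Upsilon_e(\mathbf d\,w)$ is a signed sum of $\widetilde\Upsilon_x$ applied to the vertex jets of a lift $W$, which vanishes for an edge bubble since both $W(x)$ and $\nabla W(x)$ vanish there), $\widetilde\Upsilon_e$ maps $B^1(e)$ onto $\mathcal Z$, so the injection is onto by $\dim B^0(e)=7k-53=\dim B^1(e)-\dim\mathcal Z$.

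I expect the main obstacle to be bookkeeping rather than any single hard estimate: fixing consistent frames, unwinding the jet-valued face-to-vertex and edge-to-vertex traces written after \eqref{eq:fes-Hess3D}, and---most delicately---checking that each truncated bubble sequence is genuinely a complex, which is not automatic because $d$ and $\tr$ are assumed to commute only at the top dimension. That point rests on combining the Stokes identity $\Upsilon_\sigma(dw)=\Upsilon_{\partial\sigma}(w)$ with the fact that each $\widetilde\Upsilon_\tau$ depends only on low-order jet data that a bubble already annihilates on its boundary. Once the complex property is in hand, exactness at cells and faces is quoted from \cite{hu2021conforming}, at vertices it is the polynomial Hessian complex, and at edges a one-line dimension count.
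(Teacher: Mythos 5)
Your proposal is correct and follows essentially the same route as the paper: the localized currents $\widetilde\Upsilon_x,\widetilde\Upsilon_e,\widetilde\Upsilon_f,\widetilde\Upsilon_T$ you write down are exactly the paper's, and the exactness check proceeds identically (cell and face quoted from the original construction, vertex via the truncated polynomial Hessian complex, edge by a dimension count), with your treatment of the edge case and of the ``is the bubble sequence actually a complex'' issue being somewhat more explicit than the paper's. The only bookkeeping discrepancy is that you take the third edge slot to be $\mathbb M^2$-valued whereas the traces $[(\partial_{\bm n_i\bm n_j}u)|_e]_{ij}$ and $[\bm n_i^T\bm\sigma\bm n_j]_{ij}$ land in $\mathbb S^2$ (as in the paper's edge bubble complex); since you use $\mathbb M^2$ consistently on both sides, your count $7k-53$ and the paper's $6k-46$ both close, so the conclusion is unaffected.
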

\begin{proof}
    
First, we show the localizable property. Let
\begin{equation*}
    \widetilde{\Upsilon}_T: P_{k-4}(T)\otimes \mathbb R^3\rightarrow \mathcal{Z}: \bm q\mapsto \sum_{i=1}^4\big(\int_{T} \bm q \cdot \psi_i\big)z_i
\end{equation*}
and
\begin{equation*}
    \widetilde{\Upsilon}_f:   {\begin{pmatrix} P_{k-3}(f) \otimes \mathbb R^2\\ P_{k-3}(f) \otimes \mathbb R \end{pmatrix}} \to \mathcal{Z} :  (a_0,a_1) = \sum_{i=1}^4\big(\int_{f} a_0 \cdot (\Pi_f\psi_i) + a_1 (\psi_i\cdot \bm n_f)\big)z_i,
\end{equation*}
and
\begin{equation*}
     \widetilde{\Upsilon}_e : {\begin{pmatrix} P_{k-2}(e) \\ P_{k-2}(e) \otimes \mathbb R^2 \\ P_{k-2}(e) \otimes \mathbb M^2\end{pmatrix}} \to \mathcal{Z} : (a_0,a_1,a_2) \mapsto \sum_{i=1}^4\big(\int_e a_0 (\psi_i \cdot \bm t) + a_1 \cdot (N_e\psi_i)\big)z_i ,
\end{equation*}
and
\begin{equation*}
    \widetilde{\Upsilon}_x: \mathbb J^4 \to \mathcal{Z}:a_0\oplus a_1\oplus a_2\oplus a_3\oplus a_4\mapsto \sum_{i=1}^4\big( (\psi_i \cdot a_1)(x) - \tfrac{1}{3}(\div \psi_i \cdot a_0)(x)\big)z_i
\end{equation*}
for any $T\in\mathcal{K},f\in\mathcal{F},e\in\mathcal{E}$ and $x\in\mathcal{V}$. Then the generalized currents can be localized.

The bubble complex in cell is shown in \cite{hu2021conforming}. 
The bubble complex on faces are:
\begin{equation*}
\begin{tikzcd}
0\ar[r]& {\begin{pmatrix} \mathring{P}_k^{(4,2)}(f) \\ \mathring{P}_{k-1}^{(3,1)}(f) \end{pmatrix}} 
    \arrow[r, "{\mathbf{d}(\hess_f, \grad_f)}"] 
    & 
 {\begin{pmatrix} \mathring{P}_{k-2}^{(2,0)}(f) \otimes \mathbb S^2\\ \mathring{P}_{k-2}^{(2,0)}(f) \otimes \mathbb R^2 \end{pmatrix}}  
    \ar[r,"{{\mathbf{d}(\rot_f, \rot_f)}}"] &  
     {\begin{pmatrix} \mathring{P}_{k-3}^{(1,-1)}(f) \otimes \mathbb R^2\\ \mathring{P}_{k-3}^{(1,-1)}(f) \otimes \mathbb R \end{pmatrix}/\begin{pmatrix} RT(f) \\ \mathbb R \end{pmatrix}}\ar[r]& 0,
\end{tikzcd}
\end{equation*}
here $RT(f):=\{a+\bm b\cdot\Pi_f\bm x,a\in\mathbb R,b\in\mathbb R^2\}$. The bubble complex on edges are:
\begin{equation*}
\begin{tikzcd}[column sep = large]
   0\ar[r] &{\begin{pmatrix} \mathring{P}_k^{(4)}(e) \\ \mathring{P}_{k-1}^{(3)}(e) \otimes \mathbb R^2 \\ \mathring{P}_{k-2}^{(2)}(e) \otimes \mathbb S^2\end{pmatrix}} \ar[r,"{\mathbf{d}(\partial_{\bm t\bm t},\partial_{\bm t},\operatorname{id})}"] &  {\begin{pmatrix} \mathring{P}_k^{(2)}(e)\\ \mathring{P}_{k-1}^{(2)}(e) \otimes \mathbb R^2 \\ \mathring{P}_{k-2}^{(2)}(e) \otimes \mathbb S^2\end{pmatrix} / \begin{pmatrix} P_1(e) \\ \mathbb R^2 \\ 0\end{pmatrix}}\ar[r] &0.
   \end{tikzcd}
\end{equation*}
The bubble complex at vertex is isomprohic to 
\begin{equation*}
\begin{tikzcd}
	0 \ar[r] & P_4(\Omega)/P_1(\Omega) \ar[r,"\hess"] & P_2(\Omega)\otimes\mathbb S^3 \ar[r,"\curl"]&  P_1(\Omega)\otimes\mathbb T^3\ar[r,"\div"] & P_0(\Omega) \otimes \mathbb R^3 \ar[r] & 0.
\end{tikzcd}
\end{equation*}
It can be readily checked that the above complexes are exact.
\end{proof}
The global space of the finite element system is
\begin{equation}
\begin{tikzcd}
\label{eq:hessain3d}
	0 \ar[r] & \mathbf{Z}_k \ar[r,"\hess"] & \mathbf{HL}^{\curl}_{k-2}  \ar[r,"\curl"]&  \mathbf{HL}^{\div}_{k-3}\ar[r,"\div"] & \mathbf{DG}_{k-4}^{(0)} \otimes \mathbb R^3 \ar[r] & 0.
\end{tikzcd}
\end{equation}

Here, each space in this complex is specified as follows:
$\mathbf{Z}_k$ is the space of Zenisek--Zhang $C^1$ element in three dimensions:
\begin{equation*}
	\mathbf{Z}_k := \{ u : u|_K \in P_k(K), u\in C^4(\mathcal V) \cap C^2(\mathcal E) \cap C^1(\mathcal F)\}.
\end{equation*}
$\mathbf{HL}_{k-2}^{\curl}$ is the space of Hu--Liang $H(\curl;\mathbb S^3)$ element:
\begin{equation*}
\mathbf{HL}_{k-2}^{\curl}:= \{ \bm \sigma : \bm \sigma |_K \in P_{k-2}(K) \otimes \mathbb S^3 , \bm \sigma \in C^2(\mathcal V) \cap C^0(\mathcal E), \Pi_f\bm\sigma \in C^0(\mathcal F)\}.
\end{equation*}
$\mathbf{HL}_{k-3}^{\div}$ is the space of Hu--Liang $H(\div;\mathbb T^3)$ element:
\begin{equation*}
\mathbf{HL}_{k-3}^{\div} := \{ \bm \tau : \bm \tau |_{K} \in P_{k-3}(K) \otimes \mathbb T^3 , \bm \tau \in C^1(\mathcal V), \bm \tau \bm n \in C^0(\mathcal F)\}.
\end{equation*}
$\mathbf{DG}_{k-4}^{(0)}\otimes\mathbb R^3$ is defined by
\begin{equation*}
    \mathbf{DG}_{k-4}^{(0)}\otimes\mathbb R^3 :=\{\bm q:\bm q|_K\in P_{k-4}(K)\otimes\mathbb R^3,\bm q\in C^0(\mathcal{V})\}.
\end{equation*}
As a consequence, the cohomology of \eqref{eq:hessain3d} is isomorphic to $P_1(\Omega) \otimes \mathcal H_{dR}(\Omega).$

\section{Proofs}
\label{sec:proofs}

This section gives the proofs of the Theorem \ref{thm:cohomology} and \ref{thm:operator}. The proof of Theorem \ref{thm:operatorL2} comes form scaling argument, and we omit the details here.

\subsection{Skeletal degrees of freedom, harmonic forms and cohomologies}
In this subsection, we will prove the cohomology of the finite element complex in Theorem \ref{thm:cohomology}, provided that the FECTS is compatible with respect to the generalized currents. The goal of this section is to show a finer structure of the global finite element space and identify the harmonic form. This is stronger than the cohomological result we obtained in the previous section.

We first introduce the harmonic inner product, which have been intensively studied and used in \cite{arnold2006finite,arnold2021local}. 
\begin{definition}[Harmonic inner product]
Suppose that $X' \xrightarrow{d'} X \xrightarrow{d} X''$ is an exact sequence of Hilbert spaces. Then, the following bilinear form:
$$\langle u, v \rangle_{X} = (\mathcal P_{d'(X')} u , \mathcal P_{d'(X')} v)_X + (du, dv)_{X''},$$
is an inner product. Here $\mathcal P_{d'(X')}$ is the projection operator onto $d'(X')$.
\end{definition}

For each $\tau$, we denote by
\begin{equation*}
    \tilde{B}^k(\tau) = 
    \begin{cases}
        B^k(\tau)\cap\ker(\widetilde{\Upsilon}_{\tau}),& k = \operatorname{dim}\tau,\\
        B^k(\tau),&\text{otherwise}.
    \end{cases}
\end{equation*}
Then from \eqref{eq:bubble_complex}, it can be shown that
\begin{equation}\begin{tikzcd}[column sep = small]\label{eq:bubble_complex_1}
       0\ar[r]& \tilde{B}^0(\tau) \ar[r,"d"] &  \tilde{B}^1(\tau) \ar[r,"d"] & \cdots   \ar[r,"d"] & \tilde{B}^{n}(\tau) \ar[r] & 0\end{tikzcd}\end{equation} 
is an exact complex. Let $\mathcal N_{\tau,0}$ be the dimension of the bubble space $\tilde{B}^0(\tau)$. Moreover, we set $\mathcal{N}_{\tau,-1}=0$. Note that each bubble space $\tilde{B}^k(\tau)$ can be seen as a Hilbert space with $L^2$-norm on $\tau$. Then we can show a normalized orthogonal basis of $\tilde{B}^k(\tau)$ with respect to the harmonic inner products.
\begin{lemma}[Orthogonal basis of bubble space]
    For each $\tau\in\mathcal{T}$, there exists a normalized orthogonal basis $\{\phi_{\tau,0}^i\}_{1\leq i\leq \mathcal{N}_{\tau,0}}$ of bubble space $\tilde{B}^0(\tau)$ such that
    \begin{equation*}
        \langle \phi_{\tau,0}^i, \phi_{\tau,0}^j \rangle_{\tilde{B}^0(\tau)} = (d \phi_{\tau,0}^i,d \phi_{\tau,0}^j)_{\tau} = \delta_{ij},\quad \forall 1\leq i,j\leq \mathcal{N}_{\tau,0}.
    \end{equation*}
    For each $1\leq k\leq n$, there exists a set $\{\phi_{\tau,k}^i\}_{1\leq i\leq \mathcal{N}_{\tau,k}}$ such that $\{d\phi_{\tau,k-1}^{i}\}_{1\leq i\leq \mathcal{N}_{\tau,k-1}}\cup \{\phi_{\tau,k}^i\}_{1\leq i\leq \mathcal{N}_{\tau,k}}$ form a basis of bubble space $\tilde{B}^k(\tau)$, and satisfies that
    \begin{align*}
        \langle d\phi_{\tau,k-1}^i, \phi_{\tau,k}^j \rangle_{\tilde{B}^k(\tau)} &= (d\phi_{\tau,k-1}^i, \phi_{\tau,k}^j)_{\tau} = 0,\quad \forall 1\leq i\leq \mathcal{N}_{\tau,k-1},1\leq j\leq \mathcal{N}_{\tau,k},\\
        \langle d\phi_{\tau,k-1}^i, d\phi_{\tau,k-1}^j \rangle_{\tilde{B}^k(\tau)} & = (d\phi_{\tau,k-1}^i, d\phi_{\tau,k-1}^j)_{\tau} = \delta_{ij},\quad \forall 1\leq i,j\leq \mathcal{N}_{\tau,k-1},\\
        \langle \phi_{\tau,k}^i, \phi_{\tau,k}^j \rangle_{\tilde{B}^k(\tau)} & = (d\phi_{\tau,k}^i, d\phi_{\tau,k}^j)_{\tau} = \delta_{ij},\quad \forall 1\leq i,j\leq \mathcal{N}_{\tau,k}.
    \end{align*}
\end{lemma}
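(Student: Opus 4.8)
The plan is to prove the lemma by induction on $k$, at each level producing the family $\{\phi^i_{\tau,k}\}$ by a Gram--Schmidt procedure inside the ``co-closed'' summand of the Hodge-type decomposition furnished by exactness of \eqref{eq:bubble_complex_1}. Fix $\tau$, abbreviate $\tilde{B}^k:=\tilde{B}^k(\tau)$, view each $\tilde{B}^k$ as a finite-dimensional Hilbert space under $(\cdot,\cdot)_\tau$, and set $Z^k:=\ker(d\colon\tilde{B}^k\to\tilde{B}^{k+1})$ and $W^k:=(Z^k)^\perp\cap\tilde{B}^k$, so that $\tilde{B}^k=Z^k\oplus W^k$ is an $L^2(\tau)$-orthogonal splitting. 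First I would record the consequences of exactness of \eqref{eq:bubble_complex_1}: $Z^0=0$ (so $d$ is injective on $\tilde{B}^0$), $Z^k=d(\tilde{B}^{k-1})$ for $1\le k\le n$, and $d|_{W^k}\colon W^k\to Z^{k+1}$ is an isomorphism (injective since $W^k\cap Z^k=0$, surjective since $d(\tilde{B}^k)=d(W^k)=Z^{k+1}$). I would also note the two reductions of the harmonic inner product on $\tilde{B}^k$ associated with $\tilde{B}^{k-1}\xrightarrow{d}\tilde{B}^k\xrightarrow{d}\tilde{B}^{k+1}$: since $\mathcal P_{d(\tilde{B}^{k-1})}=\mathcal P_{Z^k}$, this inner product restricts to $(\cdot,\cdot)_\tau$ on $Z^k$ and to $(d\,\cdot,d\,\cdot)_\tau$ on $W^k$; the borderline cases $k=0$ (take $X'=0$) and $k=n$ (take $X''=0$) are handled by the same formulas.

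\textbf{Base case and induction.} For $k=0$ the harmonic inner product on $\tilde{B}^0$ is $(d\,\cdot,d\,\cdot)_\tau$, which is positive definite by injectivity of $d$ there, so Gram--Schmidt yields an orthonormal basis $\{\phi^i_{\tau,0}\}$ with $\langle\phi^i_{\tau,0},\phi^j_{\tau,0}\rangle_{\tilde{B}^0}=(d\phi^i_{\tau,0},d\phi^j_{\tau,0})_\tau=\delta_{ij}$. For the inductive step I carry the slightly stronger hypothesis that each $\{\phi^i_{\tau,j}\}$, $j\le k-1$, is a $(d\,\cdot,d\,\cdot)_\tau$-orthonormal basis of $W^j$. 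Then, pushing the basis of $W^{k-1}$ through the isomorphism $d|_{W^{k-1}}$, the set $\{d\phi^i_{\tau,k-1}\}$ is a basis of $Z^k=\ker(d|_{\tilde{B}^k})$, and $\langle d\phi^i_{\tau,k-1},d\phi^j_{\tau,k-1}\rangle_{\tilde{B}^k}=(d\phi^i_{\tau,k-1},d\phi^j_{\tau,k-1})_\tau=\langle\phi^i_{\tau,k-1},\phi^j_{\tau,k-1}\rangle_{\tilde{B}^{k-1}}=\delta_{ij}$ follows from the two reductions above. On $W^k$ the form $(d\,\cdot,d\,\cdot)_\tau$ is positive definite, so Gram--Schmidt gives $\{\phi^i_{\tau,k}\}$, a $(d\,\cdot,d\,\cdot)_\tau$-orthonormal basis of $W^k$, and I set $\mathcal N_{\tau,k}:=\dim W^k=\dim\tilde{B}^k-\mathcal N_{\tau,k-1}$. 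Since $\tilde{B}^k=Z^k\oplus W^k$, the union $\{d\phi^i_{\tau,k-1}\}\cup\{\phi^i_{\tau,k}\}$ is a basis of $\tilde{B}^k$, and because $\phi^j_{\tau,k}\in(Z^k)^\perp$ one gets $(d\phi^i_{\tau,k-1},\phi^j_{\tau,k})_\tau=0$, $\mathcal P_{Z^k}\phi^i_{\tau,k}=0$, hence $\langle d\phi^i_{\tau,k-1},\phi^j_{\tau,k}\rangle_{\tilde{B}^k}=0$ and $\langle\phi^i_{\tau,k},\phi^j_{\tau,k}\rangle_{\tilde{B}^k}=(d\phi^i_{\tau,k},d\phi^j_{\tau,k})_\tau=\delta_{ij}$. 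This closes the induction; for $k=n$ one has $W^n=0$, so no $\phi^i_{\tau,n}$ appear and $\{d\phi^i_{\tau,n-1}\}$ alone is a basis of $\tilde{B}^n=d(\tilde{B}^{n-1})$.

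\textbf{Main difficulty.} I do not expect a genuine obstacle: once exactness of \eqref{eq:bubble_complex_1} is used to produce the orthogonal splitting $\tilde{B}^k=d(\tilde{B}^{k-1})\oplus^\perp W^k$ and to identify $\ker d$ at one level with the image of $d$ from the previous one, everything reduces to Gram--Schmidt. The only point demanding care is keeping straight the three distinct inner products in play --- the harmonic inner product on $\tilde{B}^{k-1}$, the plain $L^2(\tau)$-inner product on $\tilde{B}^k$, and the harmonic inner product on $\tilde{B}^k$ --- which is precisely why the induction is arranged to transport the stronger statement that $\{\phi^i_{\tau,k}\}$ is $(d\,\cdot,d\,\cdot)_\tau$-orthonormal in $W^k$, rather than only the relations as printed in the lemma.
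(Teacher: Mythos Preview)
Your proposal is correct and takes essentially the same approach as the paper: the paper's proof is the single line ``It can be deduced from induction and the exactness of the bubble complex,'' and you have simply unpacked that induction, using exactness to obtain the orthogonal Hodge-type splitting $\tilde{B}^k=d(\tilde{B}^{k-1})\oplus^\perp W^k$ and then Gram--Schmidt on each summand.
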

\begin{proof}
    It can be deduced from induction and the exactness of the bubble complex.
\end{proof}

\begin{lemma}[Local degrees of freedom]
    For each $\tau\in\mathcal{T}$ and $0\leq k\leq n$, a basis of the dual space of $B^k(\tau)$ can be given as
    \begin{equation*}
        B^k(\tau)^{\vee} = 
        \begin{cases}
            \{\langle \cdot, \phi \rangle_{\tilde{B}^k(\tau)}\},&\phi\in \{d\phi_{\tau,k-1}^{i}\}_i\cup \{\phi_{\tau,k}^i\}_i,\quad \text{if }k \neq \operatorname{dim}\tau,\\
            \{\langle \cdot, \phi\rangle_{\tilde{B}^k(\tau)}\}\cup \widetilde{\Upsilon}_{\tau}(\cdot),&\phi\in \{d\phi_{\tau,k-1}^{i}\}_i\cup \{\phi_{\tau,k}^i\}_i,\quad \text{if }k = \operatorname{dim}\tau.
        \end{cases}
    \end{equation*}
    Then from the geometric decomposition, for any $w\in A^k(\sigma),\sigma\in\mathcal{T}_n$, it can be uniquely determined by
    \begin{itemize}
        \item[-] $\langle \tr_{\sigma \to \tau}w , \phi \rangle_{\tilde{B}^k(\tau)}$ for any $\phi\in \{d\phi_{\tau,k-1}^{i}\}_i\cup \{\phi_{\tau,k}^i\}_i$ and $\tau \trianglelefteq \sigma$,
        \item[-] $\Upsilon_{\tau}(w) $ for any $\tau \trianglelefteq \sigma$ with $\operatorname{dim}\tau =k$.
    \end{itemize}
\end{lemma}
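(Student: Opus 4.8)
The plan is to combine two already-established facts: the unisolvence result of \Cref{prop:fe} (the trace structure with geometric decomposition gives a valid finite element whose DOFs are $L(\tr_{\sigma\to\tau}u)$ for $L\in B^k(\tau)^\vee$), and the orthogonal-basis lemma for the modified bubble complex \eqref{eq:bubble_complex_1}. First I would verify that the listed functionals genuinely form a basis of $B^k(\tau)^\vee$, i.e.\ that they are $\dim B^k(\tau)$ in number and linearly independent (equivalently, unisolvent on $B^k(\tau)$). For $k\neq\dim\tau$ this is immediate: $\{d\phi_{\tau,k-1}^i\}_i\cup\{\phi_{\tau,k}^i\}_i$ is, by the orthogonal-basis lemma, a basis of $\tilde B^k(\tau)=B^k(\tau)$, and pairing against a basis via an inner product $\langle\cdot,\cdot\rangle_{\tilde B^k(\tau)}$ always yields a dual basis; in particular it is a basis of $B^k(\tau)^\vee$. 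For $k=\dim\tau$ we have the exact sequence $0\to\tilde B^k(\tau)\to B^k(\tau)\xrightarrow{\widetilde\Upsilon_\tau}\mathcal Z\to 0$ — exact on the right because, by the Compatibility assumption, $\widetilde\Upsilon_\tau$ is the localization of the surjective $\Upsilon_\tau$, hence surjective, and exact in the middle by the very definition $\tilde B^k(\tau)=B^k(\tau)\cap\ker\widetilde\Upsilon_\tau$. Therefore $\dim B^k(\tau)=\dim\tilde B^k(\tau)+\dim\mathcal Z$, and the functionals $\{\langle\cdot,\phi\rangle_{\tilde B^k(\tau)}\}$ (restricted to $B^k(\tau)$, composed with the $L^2$-orthogonal projection onto $\tilde B^k(\tau)$, say) together with the $\dim\mathcal Z$ components of $\widetilde\Upsilon_\tau$ form a basis of $B^k(\tau)^\vee$: if $b\in B^k(\tau)$ is annihilated by all of them, then $\widetilde\Upsilon_\tau b=0$ places $b\in\tilde B^k(\tau)$, and then $\langle b,\phi\rangle_{\tilde B^k(\tau)}=0$ for all basis elements $\phi$ forces $b=0$.

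Next I would transfer this to $A^k(\sigma)$ for $\sigma\in\mathcal T_n$. By geometric decomposition and \Cref{prop:fe}, $A^k(\sigma)$ is unisolvent with respect to the DOFs $L(\tr_{\sigma\to\tau}w)$ as $\tau$ ranges over subsimplices of $\sigma$ and $L$ over the chosen basis of $B^k(\tau)^\vee$ just described. Substituting the two cases of the basis yields exactly the two bullet families in the statement: the harmonic pairings $\langle\tr_{\sigma\to\tau}w,\phi\rangle_{\tilde B^k(\tau)}$ for all $\tau\trianglelefteq\sigma$ and all $\phi\in\{d\phi_{\tau,k-1}^i\}_i\cup\{\phi_{\tau,k}^i\}_i$, plus the generalized-current values $\widetilde\Upsilon_\tau(\tr_{\sigma\to\tau}w)=\Upsilon_\tau(w)$ for those $\tau\trianglelefteq\sigma$ with $\dim\tau=k$ (here I use the localization identity $\widetilde\Upsilon_\tau(\tr_{\sigma\to\tau}w)=\Upsilon_\tau(w)$ from the Compatibility definition, which is why the second bullet is phrased with $\Upsilon_\tau(w)$ rather than $\widetilde\Upsilon_\tau$). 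Counting confirms consistency: $\sum_{\tau\trianglelefteq\sigma}\dim B^k(\tau)=\dim A^k(\sigma)$ by geometric decomposition.

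One technical point needs care, and I expect it to be the main obstacle: the functional $\langle\,\cdot\,,\phi\rangle_{\tilde B^k(\tau)}$ is a priori defined only on $\tilde B^k(\tau)$, not on all of $B^k(\tau)$ (the harmonic inner product is built from the exact subcomplex \eqref{eq:bubble_complex_1}), and certainly not on $A^k(\sigma)$. So I must specify the extension convention — the natural choice is to precompose with the $L^2(\tau)$-orthogonal projection $A^k(\sigma)\xrightarrow{\tr_{\sigma\to\tau}}B^k(\tau)\to\tilde B^k(\tau)$ — and then re-examine the unisolvence argument of \Cref{prop:fe} with this convention in force. The inductive skeleton of that proof (showing $\tr_{\sigma\to\tau}w\in B^k(\tau)$ by downward induction on $\dim\tau$, then using $B^k(\tau)^\vee$ to kill it) goes through unchanged provided that, at each stage, once we already know $\tr_{\sigma\to\tau}w\in B^k(\tau)$, vanishing of all the listed functionals on it forces $\tr_{\sigma\to\tau}w=0$; and that is exactly the middle-exactness plus dual-basis argument of the first paragraph. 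I would also double-check that for $k\neq\dim\tau$ no $\widetilde\Upsilon_\tau$ term is needed, which is consistent with $\tilde B^k(\tau)=B^k(\tau)$ in that range, and that the bubble complex \eqref{eq:bubble_complex_1} being a genuine complex (not merely a sequence) is what makes $\langle\cdot,\cdot\rangle_{\tilde B^k(\tau)}$ an inner product in the first place — this is guaranteed by the Compatibility assumption, which upgrades \eqref{eq:bubble_complex} to exactness.
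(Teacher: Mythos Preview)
Your proposal is correct and follows essentially the same route as the paper, whose proof is a single line invoking the orthogonal basis of bubble spaces together with the localization identity $\widetilde{\Upsilon}_{\tau}(\tr_{\sigma\to\tau}w)=\Upsilon_{\tau}(w)$. You are in fact more careful than the paper: the two technical points you flag---how $\langle\cdot,\cdot\rangle_{\tilde B^k(\tau)}$ is to be extended from $\tilde B^k(\tau)$ to $B^k(\tau)$, and why $\widetilde\Upsilon_\tau$ restricted to $B^{\dim\tau}(\tau)$ is surjective onto $\mathcal Z$---are genuine subtleties that the paper leaves implicit.
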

\begin{proof}This is directly from the orthogonal basis of bubble spaces and $\widetilde{\Upsilon}_{\tau}(\tr_{\sigma \to \tau} w) = \Upsilon_{\tau}(w)$.
\end{proof}

The degrees of freedom provide a direct sum decomposition of the global finite element space
$$
\bm A^k =  \bm S^k(\mathcal{T}) \oplus \bigoplus_{\tau \in \mathcal T} \mathbb B^k(\tau).$$
	
Here, the space $\mathbb B^k(\tau)$ consists of all functions $w\in \bm A^k$ such that
\begin{itemize}
    \item[-] $\langle \tr_{\sigma \to \eta}w , \phi \rangle_{\tilde{B}^k(\eta)}=0$  for any $\phi\in \{d\phi_{\eta,k-1}^{i}\}_i\cup \{\phi_{\eta,k}^i\}_i$ and $\eta \in \mathcal{T},\eta \neq \tau$;
    \item[-]  $\Upsilon_{\eta}(w) =0$ for any $\eta \in \mathcal{T}_k$.
\end{itemize}
And the space $ \bm S^k(\mathcal{T})$ consists of all functions $w\in \bm A^k$ such that
\begin{itemize}
    \item[-] $\langle \tr_{\sigma \to \eta}w , \phi \rangle_{\tilde{B}^k(\eta)}=0$  for any $\phi\in \{d\phi_{\eta,k-1}^{i}\}_i\cup \{\phi_{\eta,k}^i\}_i$ and $\eta \in \mathcal{T}$.
\end{itemize}
\begin{remark}
    Note that from the definition of $\bm A^k$, it holds that for any $w\in \bm A^k$, $\tr_{\sigma \to \tau} w|_{\sigma} = \tr_{\sigma' \to \tau} w|_{\sigma'}$ for all $\tau \trianglelefteq \sigma, \sigma'$ and $\sigma, \sigma'\in\mathcal{T}_n$. Therefore there is no confusion to use $\tr_{\sigma \to \eta}w$.
\end{remark}

\begin{lemma}[Closedness]
For any $\tau\in\mathcal{T},0\leq k\leq n-1$ and $w \in \mathbb B^k(\tau)$, it holds that $
    dw\in \mathbb B^{k+1}(\tau)$; 
 for any $v\in \bm S^k(\mathcal{T})$, it holds that
$
    dv \in \bm S^{k+1}(\mathcal{T}).
$
\end{lemma}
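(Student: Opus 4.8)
The plan is to reduce both assertions to a single local identity on each simplex, obtained from the commuting property of the FECTS together with the orthogonality relations of the bubble bases; no exactness of a bubble complex is needed for this. Fix $\eta\in\mathcal T$, pick $\sigma\in\mathcal T_n$ with $\eta\trianglelefteq\sigma$, and for $w\in\bm A^k$ write $u:=\tr_{\sigma\to\eta}w\in A^k(\eta)$. Since $(dw)|_\sigma=d_\sigma^k(w|_\sigma)$ and $d_\eta^k\circ\tr_{\sigma\to\eta}^k=\tr_{\sigma\to\eta}^{k+1}\circ d_\sigma^k$, the trace of $dw$ to $\eta$ is exactly $du$. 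Hence all $\tilde{B}^{k+1}(\eta)$-degrees of freedom of $dw$ at $\eta$ are the numbers $\langle du,\phi\rangle_{\tilde{B}^{k+1}(\eta)}$ with $\phi\in\{d\phi_{\eta,k}^i\}_i\cup\{\phi_{\eta,k+1}^i\}_i$, and the key claim is
\[
\langle du,\ \phi\rangle_{\tilde{B}^{k+1}(\eta)}=\begin{cases}\langle u,\ \phi_{\eta,k}^i\rangle_{\tilde{B}^k(\eta)},&\phi=d\phi_{\eta,k}^i,\\ 0,&\phi=\phi_{\eta,k+1}^i,\end{cases}
\]
for every $w\in\bm A^k$.

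To prove this identity I would unfold the harmonic inner product on $\tilde{B}^{k+1}(\eta)$ (well defined by exactness of \eqref{eq:bubble_complex_1}): $\langle du,\phi\rangle_{\tilde{B}^{k+1}(\eta)}=(\mathcal P_{d\tilde{B}^k(\eta)}du,\mathcal P_{d\tilde{B}^k(\eta)}\phi)_\eta+(d\,du,d\phi)_\eta$. The last term vanishes since $d\circ d=0$ on $A^\bullet(\eta)$. For $\phi=\phi_{\eta,k+1}^i$, the relation $(d\phi_{\eta,k}^j,\phi_{\eta,k+1}^i)_\eta=0$ gives $\phi_{\eta,k+1}^i\perp d\tilde{B}^k(\eta)$, killing the remaining term. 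For $\phi=d\phi_{\eta,k}^i$ the projection acts as the identity there, so by selfadjointness of $\mathcal P_{d\tilde{B}^k(\eta)}$ the remaining term equals $(du,d\phi_{\eta,k}^i)_\eta$; comparing with $\langle u,\phi_{\eta,k}^i\rangle_{\tilde{B}^k(\eta)}=(\mathcal P_{d\tilde{B}^{k-1}(\eta)}u,\mathcal P_{d\tilde{B}^{k-1}(\eta)}\phi_{\eta,k}^i)_\eta+(du,d\phi_{\eta,k}^i)_\eta$ and using $(d\phi_{\eta,k-1}^j,\phi_{\eta,k}^i)_\eta=0$ to kill $\mathcal P_{d\tilde{B}^{k-1}(\eta)}\phi_{\eta,k}^i$ identifies the two.

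With the identity in hand the conclusions are immediate. First, $dw\in\bm A^{k+1}$ by the definition of a FECTS. If $v\in\bm S^k(\mathcal T)$ then $\langle\tr_{\sigma\to\eta}v,\phi_{\eta,k}^i\rangle_{\tilde{B}^k(\eta)}=0$ for every $\eta$, so every $\tilde{B}^{k+1}(\eta)$-degree of freedom of $dv$ vanishes, i.e.\ $dv\in\bm S^{k+1}(\mathcal T)$. If $w\in\mathbb B^k(\tau)$ the identity yields $\langle\tr_{\sigma\to\eta}(dw),\phi\rangle_{\tilde{B}^{k+1}(\eta)}=0$ for all $\eta\neq\tau$ and all admissible $\phi$; the only remaining requirement, $\Upsilon_\eta(dw)=0$ for $\eta\in\mathcal T_{k+1}$, follows from the generalized Stokes formula applied to the polynomial $w|_\sigma$ and localized via the maps $\widetilde{\Upsilon}$: $\Upsilon_\eta(dw)=\sum_{\mu\trianglelefteq_1\eta}\mathcal O(\mu,\eta)\Upsilon_\mu(w)$, where $\dim\mu=k$ for each such $\mu$ and $\Upsilon_\mu(w)=0$ for all $\mu\in\mathcal T_k$ by the definition of $\mathbb B^k(\tau)$. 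Hence $dw\in\mathbb B^{k+1}(\tau)$.

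The step I expect to be most delicate is the bookkeeping around the harmonic inner products: the functionals $u\mapsto\langle u,\phi\rangle_{\tilde{B}^j(\eta)}$ are applied to traces $u$ that need not themselves lie in $\tilde{B}^j(\eta)$, so one must use the defining bilinear form (not the abstract inner product) and keep careful track of the cochain level of each projection $\mathcal P_{d\tilde{B}^\bullet(\eta)}$. A smaller point is to justify that $\Upsilon_\eta$ acts on $\bm A^k$ through $\widetilde{\Upsilon}_\eta\circ\tr_{\sigma\to\eta}$, which is single-valued on $\bm A^k$ by the conformity defining $\bm A^k$ and agrees cell-wise with the smooth currents, so that the localized Stokes identity used above is legitimate.
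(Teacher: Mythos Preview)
Your proof is correct and follows essentially the same approach as the paper: both use the commuting property $d\circ\tr=\tr\circ d$ to reduce to computing $\langle d\tr_{\sigma\to\eta}w,\phi\rangle_{\tilde{B}^{k+1}(\eta)}$, then exploit $d\circ d=0$ and the orthogonality $(d\phi_{\eta,k-1}^j,\phi_{\eta,k}^i)_\eta=0$ to control these quantities, and invoke the generalized Stokes formula for the $\Upsilon$-degrees. Your write-up is in fact more explicit than the paper's: the paper simply asserts $(d\tr_{\sigma\to\eta}w,\mathcal P_{d\tilde{B}^k(\eta)}\phi)_\eta=0$ without isolating the identity $\langle du,d\phi_{\eta,k}^i\rangle_{\tilde{B}^{k+1}(\eta)}=\langle u,\phi_{\eta,k}^i\rangle_{\tilde{B}^k(\eta)}$ that makes the case $\phi=d\phi_{\eta,k}^i$ work, whereas you spell this out; and your closing remarks about interpreting the harmonic bilinear form on traces $u\notin\tilde{B}^k(\eta)$ and about localizing $\Upsilon_\eta$ via $\widetilde{\Upsilon}_\eta\circ\tr_{\sigma\to\eta}$ are legitimate and address points the paper leaves implicit.
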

\begin{proof}
	For any $w \in \mathbb B^k(\tau)$, since $dw \in \bm A^{k+1}$ and
    \begin{equation*}
        \Upsilon_{\theta}(dw) = \Upsilon_{\partial \theta}(w) = \sum_{\eta \trianglelefteq_1 \theta} \mathcal O(\eta, \theta) \Upsilon_{\eta}(w) = 0,\quad \text{for all }\theta\in \mathcal{T}_{k+1}.
    \end{equation*}
    It suffices to prove that $\langle \tr_{\sigma \to \eta}dw , \phi \rangle_{\tilde{B}^{k+1}(\eta)}=0$ for any $\phi\in \{d\phi_{\eta,k}^{i}\}_i\cup \{\phi_{\eta,k+1}^i\}_i$ and $\eta \in \mathcal{T},\eta \neq \tau$. Note that
    \begin{equation*}
    \begin{split}
        \langle \tr_{\sigma \to \eta}dw , \phi \rangle_{\tilde{B}^{k+1}(\eta)} = \langle d\tr_{\sigma \to \eta}w , \phi \rangle_{\tilde{B}^{k+1}(\eta)} =  (d\tr_{\sigma \to \eta}w , \mathcal P_{d(\tilde{B}^k(\eta))}\phi)_{\eta} =0,\\ \quad \text{ for all } \phi\in \{d\phi_{\eta,k}^{i}\}_i\cup \{\phi_{\eta,k+1}^i\}_i,
    \end{split}
    \end{equation*}
    where we use the fact $d\circ \tr_{\sigma \to \eta} = \tr_{\sigma \to \eta}\circ d$. Hence we can derive that  $dw\in \mathbb B^{k+1}(\tau)$. 

    For any $v\in \bm S^k(\mathcal{T})$, it is obvious that $dv\in\bm A^{k+1}$, and from a similar argument, we can get that
    \begin{equation*}
        \langle \tr_{\sigma \to \eta}dv , \phi \rangle_{\tilde{B}^{k+1}(\eta)} =0, \quad \text{ for all } \phi\in \{d\phi_{\eta,k}^{i}\}_i\cup \{\phi_{\eta,k+1}^i\}_i,\eta\in \mathcal{T},
    \end{equation*}
    hence $dv\in \bm S^{k+1}(\mathcal{T})$, this completes the proof.
\end{proof}

From the above lemma, we can get the decomposition of the finite element complex.
\begin{proposition}[Direct sum decomposition of finite element complexes]
The complex 
\begin{equation}
\begin{tikzcd}
0 \ar[r] & \bm A^0 \ar[r,"d"] & \bm A^1\ar[r,"d"] & \cdots \ar[r,"d"] & \bm A^n \ar[r] &  0
\end{tikzcd}
\end{equation}
is a direct sum of 
\begin{equation}
\label{eq:complex-sk}
\begin{tikzcd}
0 \ar[r] &	\bm S^0(\mathcal T) \ar[r,"d"] & \bm S^1(\mathcal T) \ar[r,"d"] & \cdots \ar[r,"d"] & \bm S^n(\mathcal T) \ar[r] & 0
\end{tikzcd}
\end{equation}
and 
\begin{equation}
\label{eq:complex-b}
\begin{tikzcd}
0 \ar[r] &	 \mathbb B^0(\tau) \ar[r,"d"] &  \mathbb B^1(\tau) \ar[r,"d"] & \cdots \ar[r,"d"] & \mathbb B^n(\tau) \ar[r] & 0
\end{tikzcd}
\end{equation}
with all $\tau \in \mathcal T$.
\end{proposition}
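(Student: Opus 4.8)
The plan is to derive this proposition as a purely formal consequence of two facts already established above: the degreewise direct sum decomposition
\begin{equation*}
\bm A^k =  \bm S^k(\mathcal{T}) \oplus \bigoplus_{\tau \in \mathcal T} \mathbb B^k(\tau), \qquad 0 \le k \le n,
\end{equation*}
coming from the local degrees of freedom lemma together with the geometric decomposition hypothesis, and the Closedness lemma, which asserts that $d$ maps each $\mathbb B^k(\tau)$ into $\mathbb B^{k+1}(\tau)$ and each $\bm S^k(\mathcal T)$ into $\bm S^{k+1}(\mathcal T)$.

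First I would note that, by definition, to say that the cochain complex $(\bm A^\bullet, d)$ is the direct sum of a family of subcomplexes is to say exactly two things: that at each degree $k$ the space $\bm A^k$ splits as a direct sum over one fixed index set of summands, and that $d$ is block diagonal with respect to this splitting. The first requirement is precisely the displayed identity, valid at every degree with the same index set --- one summand $\bm S^\bullet(\mathcal T)$ and one summand $\mathbb B^\bullet(\tau)$ for each simplex $\tau \in \mathcal T$. Here one should observe that, for a fixed $\tau$, the subspaces $\mathbb B^k(\tau)$ for varying $k$ are cut out by conditions of the same form (vanishing of the harmonic functionals $\langle \tr_{\sigma \to \eta}\,\cdot\,,\,\phi\rangle_{\tilde{B}^k(\eta)}$ for all $\eta \ne \tau$, together with vanishing of $\Upsilon_\eta$ on $\eta \in \mathcal T_k$), so that they genuinely assemble into a graded subspace $\mathbb B^\bullet(\tau) \subset \bm A^\bullet$; likewise $\bm S^\bullet(\mathcal T)$ is a graded subspace.

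Second I would invoke the Closedness lemma to supply the second requirement: since $d(\mathbb B^k(\tau)) \subset \mathbb B^{k+1}(\tau)$ and $d(\bm S^k(\mathcal T)) \subset \bm S^{k+1}(\mathcal T)$, the differential restricts to each of the graded subspaces above --- turning \eqref{eq:complex-sk} and \eqref{eq:complex-b} into genuine subcomplexes --- and it has no components carrying one summand into a different one. Combining the two requirements gives the asserted identification
\begin{equation*}
(\bm A^\bullet, d) \;=\; (\bm S^\bullet(\mathcal T), d) \;\oplus\; \bigoplus_{\tau \in \mathcal T} (\mathbb B^\bullet(\tau), d).
\end{equation*}

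I do not expect any real obstacle here: all the substance was already spent on unisolvence of the local degrees of freedom, which yields the degreewise decomposition, and on the Closedness lemma, which yields compatibility with $d$. The only point that needs a sentence is the bookkeeping in the second paragraph --- that the pieces indexed by a fixed $\tau$ at different degrees fit together into a single subcomplex --- and this is immediate from the definitions of $\mathbb B^k(\tau)$ and $\bm S^k(\mathcal T)$.
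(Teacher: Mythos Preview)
Your proposal is correct and follows exactly the paper's approach: the paper simply prefaces the proposition with ``From the above lemma, we can get the decomposition of the finite element complex'' and gives no further argument, so the content is precisely the degreewise direct sum together with the Closedness lemma, as you identified.
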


In the remaining part of this subsection, we will discuss the cohomology of the skeletal complex \eqref{eq:complex-sk} and the bubble complex \eqref{eq:complex-b}. First, we introduce an extension operator.

\begin{definition}[Extension of bubble functions]
For any $\tau\in \mathcal{T}$ and $0\leq k\leq n$, we define an extension operator $\mathsf E^k_{\tau}: \tilde B^k(\tau) \to \mathbb B^k(\tau) \subset \bm A^k$ by: For any $b \in \tilde B^k(\tau)$, set the degrees of freedom of $\mathsf E^k_{\tau}b$ as
\begin{itemize}
    \item[-] $\langle \tr_{\sigma \to \tau}\mathsf E^k_{\tau}b , \phi \rangle_{\tilde{B}^k(\tau)}=  \langle b , \phi \rangle_{\tilde{B}^k(\tau)} $ for any $\phi\in \{d\phi_{\tau,k-1}^{i}\}_i\cup \{\phi_{\tau,k}^i\}_i$;
    \item[-] $\langle \tr_{\sigma \to \eta}\mathsf E^k_{\tau}b , \phi \rangle_{\tilde{B}^k(\eta)}=0$  for any $\phi\in \{d\phi_{\eta,k-1}^{i}\}_i\cup \{\phi_{\eta,k}^i\}_i$ and $\eta \in \mathcal{T},\eta \neq \tau$.
    \item[-]  $\Upsilon_{\eta}(\mathsf E^k_{\tau}b) =0$ for any $\eta \in \mathcal{T}_k$.
\end{itemize}
\end{definition}

From the definition of $\mathsf E_{\tau}^{k}$, 
% and similar proof in Proposition \ref{prop:fe}
we can readily obtain that
\begin{equation*}
    \tr_{\sigma \to \eta}\mathsf E^k_{\tau}b =0,\quad \forall b\in \tilde{B}^k(\tau)\text{ and }\eta\trianglelefteq\tau,\eta\neq\tau.
\end{equation*}
This leads to 
\begin{equation*}
    \tr_{\tau \to \eta}\circ \tr_{\sigma \to \tau}\mathsf E^k_{\tau}b= 0,
\end{equation*}
and $\Upsilon_{\eta}(\mathsf E^k_{\tau}b) =0$ for any $\eta \in \mathcal{T}_k$, which implies that
\begin{equation*}
    \tr_{\sigma \to \tau}\mathsf E^k_{\tau}b\in\tilde{B}^k(\tau),  \quad \forall b\in \tilde{B}^k(\tau).
\end{equation*}
Hence, from the definition of $\mathsf E_{\tau}^{k}$, it leads to that
\begin{equation*}
    \tr_{\sigma \to \tau}\mathsf E^k_{\tau}b= b,  \quad \forall b\in \tilde{B}^k(\tau).
\end{equation*}
Then $\tr_{\sigma \to \tau}$ gives an isomorphism from $\mathbb B^{k}(\tau)$ to $\tilde B^k(\tau)$ with the inverse operator $\mathsf E_{\tau}^{k}$.

Then we have the following result, telling us that the bubble complex $\mathbb B^{\bullet}$ contributes a trivial cohomology. 
\begin{proposition}[Exactness of bubble functions]
For any $\tau\in \mathcal{T}$ and $0\leq k\leq n$, the trace operator $\tr_{\sigma \to \tau}$ gives an isomorphism from $\mathbb B^{k}(\tau)$ to $\tilde B^k(\tau)$. The inverse operator is given by $\mathsf E_{\tau}^{k}$. Moreover, it holds that the differential operator $d$ commutes with $\mathsf E^k_{\tau}$. Then we get that the bubble sequence \eqref{eq:complex-b}
\begin{equation*}
\begin{tikzcd}
0 \ar[r] &	 \mathbb B^0(\tau) \ar[r,"d"] &  \mathbb B^1(\tau) \ar[r,"d"] & \cdots \ar[r,"d"] & \mathbb B^n(\tau) \ar[r] & 0
\end{tikzcd}
\end{equation*}
is exact.
\end{proposition}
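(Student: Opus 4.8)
The plan is to reduce the statement to facts already in hand: the degreewise isomorphism $\tr_{\sigma\to\tau}\colon\mathbb B^k(\tau)\to\tilde B^k(\tau)$ with inverse $\mathsf E^k_\tau$ (established in the discussion immediately preceding the statement), the exactness of the local bubble complex \eqref{eq:bubble_complex_1}, and the Closedness Lemma, which gives $d\colon\mathbb B^k(\tau)\to\mathbb B^{k+1}(\tau)$. Granting these, the only remaining points are (a) that $d$ intertwines the extension operators, $d\circ\mathsf E^k_\tau=\mathsf E^{k+1}_\tau\circ d$, and (b) the exactness of \eqref{eq:complex-b}.

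First I would observe that $\tr_{\sigma\to\tau}$ is a cochain map from $\mathbb B^\bullet(\tau)$ to $\tilde B^\bullet(\tau)$. Indeed, for $w\in\mathbb B^k(\tau)\subset\bm A^k$ and any $\sigma\in\mathcal T_n$ with $\tau\trianglelefteq\sigma$, one has $w|_\sigma\in A^k(\sigma)$ and $d|_\sigma=d_\sigma$, so the FECTS commuting property yields $\tr_{\sigma\to\tau}(dw)=d(\tr_{\sigma\to\tau}w)$; single-valuedness of the traces of elements of $\bm A^\bullet$ makes the right-hand side independent of the chosen $\sigma$. I would stress here that this uses only the commuting identity at the top level $\sigma\to\tau$ — we never compose two trace operators — so the deliberate failure of $d$ to commute with the intermediate traces $\tr_{\tau\to\eta}$ in the trace-structure formalism plays no role.

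Next, fixing $b\in\tilde B^k(\tau)$, the Closedness Lemma gives $d\,\mathsf E^k_\tau b\in\mathbb B^{k+1}(\tau)$, and the cochain-map property together with $\tr_{\sigma\to\tau}\mathsf E^k_\tau b=b$ gives $\tr_{\sigma\to\tau}(d\,\mathsf E^k_\tau b)=db$, which lies in $\tilde B^{k+1}(\tau)$ since \eqref{eq:bubble_complex_1} is a complex. Applying the inverse $\mathsf E^{k+1}_\tau$ of $\tr_{\sigma\to\tau}\colon\mathbb B^{k+1}(\tau)\to\tilde B^{k+1}(\tau)$ then yields $d\,\mathsf E^k_\tau b=\mathsf E^{k+1}_\tau(db)$, which is (a). Consequently $\{\mathsf E^k_\tau\}_k$ (equivalently $\{\tr_{\sigma\to\tau}\}$) is an isomorphism of cochain complexes between \eqref{eq:bubble_complex_1} and \eqref{eq:complex-b}: it is bijective in each degree and commutes with the differentials. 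Since \eqref{eq:bubble_complex_1} is exact, so is \eqref{eq:complex-b}; in particular $d\circ d=0$ on $\mathbb B^\bullet(\tau)$ (which also follows directly from $d\circ d=0$ on $\bm A^\bullet$) and all its cohomology vanishes.

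I do not anticipate a genuine obstacle here: the exactness of the local bubble complex, the degreewise isomorphism, and the Closedness Lemma together carry the argument, and what remains is the purely formal transfer of commutativity along an isomorphism of complexes. The one subtlety worth care is precisely the point flagged above — confirming that the cochain-map property needs only the top-level commuting relation of the FECTS and not any commutation of $d$ with the lower traces, the latter failing in general by design.
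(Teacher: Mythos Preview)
Your proposal is correct and follows essentially the same approach as the paper: show that $\tr_{\sigma\to\tau}$ (hence its inverse $\mathsf E^\bullet_\tau$) is a cochain map by the top-level FECTS commuting relation $d\circ\tr_{\sigma\to\tau}=\tr_{\sigma\to\tau}\circ d$, and then transfer the exactness of the local bubble complex \eqref{eq:bubble_complex_1} along this isomorphism of complexes. Your write-up is in fact more explicit than the paper's (which records the argument in a couple of sentences), and your observation that only the top-level commutation is needed is exactly the point the paper implicitly relies on.
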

\begin{proof}
%Firstly, it is easy to see that $\operatorname{dim}\mathbb B^{k}(\tau) = \operatorname{dim}\tilde{B}^k(\tau)$. 
Note that the commuting property is implied by the commuting property of $\tr$. 
%Note that for any $b\in \tilde{B}^k(\tau)$, we have $d\mathsf E_{\tau}^{k}b,\mathsf E_{\tau}^{k+1}db\in \mathbb B^{k+1}(\tau)$, since
%\begin{equation*}
%    \tr_{\sigma \to \tau}d\mathsf E^k_{\tau}b = d \tr_{\sigma \to \tau}\mathsf E^k_{\tau}b=db = \tr_{\sigma \to \tau}\mathsf E^{k+1}_{\tau}db
%\end{equation*}
%this implies that $d\mathsf E^k_{\tau}b = \mathsf E^{k+1}_{\tau}db$. 
Hence $\mathsf E^k_{\tau}$ induces an isomorphism from the cohomology of bubble complex  \eqref{eq:bubble_complex_1} to the cohomology of bubble sequence \eqref{eq:complex-b}. Since the bubble complex \eqref{eq:bubble_complex_1} is exact. Then we get the exactness of bubble complex \eqref{eq:bubble_complex_1}.
\end{proof}

In what follows, we show that the skeletal complex gives the nontrivial cohomology.

\begin{definition}[Extension of the skeletal functions]
We define the extension operator for the skeletal complex: $\mathsf E_{\mathsf S}^k: \mathcal Z \otimes \mathcal T_{k} \to \bm S^k(\mathcal{T})$ such that 
$$\Upsilon_{\tau}(\mathsf E_{\mathsf S}^k(z \otimes \tau)) = \delta_{\tau\tau'} z.$$
Clearly, $\mathsf E_{\mathsf S}^k$ is an isomorphism, and therefore invertible. The inverse operator $\mathsf L^k_{\mathsf S}$ can be given as follows:
$$\mathsf L_{\mathsf S}^k(w) = \sum_{\tau \in \mathcal T_k}\Upsilon_{\tau}(  w)\otimes \tau.$$
\end{definition}

\begin{proposition}
The skeletal lifting operator $\mathsf E_{\mathsf S}^{\bullet}$ gives an isomorphism from the simplicial cochain complex 
$$ 0 \to \mathcal Z \otimes \mathcal T_0 \xrightarrow{\partial^{\vee}} \mathcal Z \otimes \mathcal T_1\xrightarrow{\partial^{\vee}}\cdots\xrightarrow{\partial^{\vee}}\mathcal Z\otimes \mathcal T_n \to 0 .$$
to the skeletal complex.
\end{proposition}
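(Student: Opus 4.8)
The plan is to establish two facts: that each $\mathsf E_{\mathsf S}^k\colon \mathcal Z\otimes\mathcal T_k\to\bm S^k(\mathcal T)$ is a linear isomorphism, and that $\mathsf E_{\mathsf S}^{\bullet}$ intertwines the simplicial coboundary with the discrete differential. The first fact is essentially already available: by the local degrees of freedom lemma together with unisolvence (\Cref{prop:fe}), a function $w\in\bm A^k$ with all bubble pairings $\langle\tr_{\sigma\to\eta}w,\phi\rangle_{\tilde B^k(\eta)}$ set to zero is uniquely and freely determined by the values $\{\Upsilon_\tau(w)\}_{\tau\in\mathcal T_k}$; this says precisely that $\mathsf E_{\mathsf S}^k$ is a bijection onto $\bm S^k(\mathcal T)$ with explicit inverse $\mathsf L_{\mathsf S}^k(w)=\sum_{\tau\in\mathcal T_k}\Upsilon_\tau(w)\otimes\tau$. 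So the real content is the commutativity of the squares, and I would phrase it through the inverses: it suffices to prove $\mathsf L_{\mathsf S}^{k+1}\circ d=\partial^{\vee}\circ\mathsf L_{\mathsf S}^k$ on $\bm S^k(\mathcal T)$, where $\partial^{\vee}$ is the simplicial coboundary $\partial^{\vee}(z\otimes\tau)=\sum_{\theta:\tau\trianglelefteq_1\theta}\mathcal O(\tau,\theta)\,z\otimes\theta$.

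For $w\in\bm S^k(\mathcal T)$ the closedness lemma gives $dw\in\bm S^{k+1}(\mathcal T)$, so $\mathsf L_{\mathsf S}^{k+1}(dw)$ is defined and
\begin{equation*}
\mathsf L_{\mathsf S}^{k+1}(dw)=\sum_{\theta\in\mathcal T_{k+1}}\Upsilon_\theta(dw)\otimes\theta=\sum_{\theta\in\mathcal T_{k+1}}\Big(\sum_{\tau\trianglelefteq_1\theta}\mathcal O(\tau,\theta)\,\Upsilon_\tau(w)\Big)\otimes\theta,
\end{equation*}
where the second equality is the generalized Stokes' formula $\Upsilon_\theta(dw)=\Upsilon_{\partial\theta}(w)$. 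Swapping the order of summation rewrites the right-hand side as $\sum_{\tau\in\mathcal T_k}\Upsilon_\tau(w)\otimes\big(\sum_{\theta:\tau\trianglelefteq_1\theta}\mathcal O(\tau,\theta)\,\theta\big)$, which is exactly $\partial^{\vee}\big(\sum_{\tau\in\mathcal T_k}\Upsilon_\tau(w)\otimes\tau\big)=\partial^{\vee}(\mathsf L_{\mathsf S}^k w)$. Hence $\mathsf E_{\mathsf S}^{\bullet}$ is an isomorphism of cochain complexes.

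The one step deserving care — and, I expect, the only genuine obstacle — is justifying the generalized Stokes' formula for the piecewise-polynomial discrete function $w$, since the currents $\Upsilon_\bullet$ were originally introduced only for smooth forms on $\Omega$. Here I would unwind $\Upsilon_\tau(w):=\widetilde\Upsilon_\tau(\tr_{\sigma\to\tau}w|_\sigma)$ (well defined by single-valuedness of traces in $\bm A^k$), fix an $n$-cell $\sigma\supseteq\theta$, and combine the top-level commuting relation $d\,\tr_{\sigma\to\theta}=\tr_{\sigma\to\theta}\,d_\sigma$ with the localizer identity $\widetilde\Upsilon_\tau\circ\tr_{\sigma\to\tau}=\Upsilon_\tau$ to reduce the identity for $w$ to the smooth generalized Stokes' formula applied to the polynomial $w|_\sigma\in A^k(\sigma)\subset C^{\infty}(\bar\Omega)\otimes\mathbb X^k$; this reduction is exactly the one already performed inside the proof of the closedness lemma, so it may simply be cited.

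Finally, with this proposition proved, the cohomology of the skeletal complex \eqref{eq:complex-sk} equals that of the simplicial cochain complex $\mathcal Z\otimes\mathcal T_\bullet$, namely $\mathcal Z\otimes\mathcal H_{dR}(\Omega)$ by de Rham's theorem; combined with the exactness of the bubble complexes \eqref{eq:complex-b} and the direct sum decomposition $\bm A^\bullet=\bm S^\bullet(\mathcal T)\oplus\bigoplus_\tau\mathbb B^\bullet(\tau)$, this yields \Cref{thm:cohomology}.
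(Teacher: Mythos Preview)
Your proposal is correct and takes essentially the same approach as the paper: the paper's proof is the one-line observation that $\mathsf L_{\mathsf S}^{k+1}\circ d\circ\mathsf E_{\mathsf S}^k=\partial^{\vee}$, which is exactly your identity $\mathsf L_{\mathsf S}^{k+1}\circ d=\partial^{\vee}\circ\mathsf L_{\mathsf S}^k$ composed on the right with $\mathsf E_{\mathsf S}^k$. Your explicit unwinding via the generalized Stokes' formula, and your care in justifying it for piecewise-polynomial $w$ by restricting to a single cell $\sigma\supseteq\theta$, fills in details the paper leaves implicit (the same reduction is used silently in the closedness lemma).
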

\begin{proof}
It can be readily shown that $\mathsf L_{\mathsf S}^{k+1} \circ d \circ \mathsf E_{\mathsf S}^k = \partial^{\vee}$. Therefore, we conclude the result.
%We show that $\mathsf L_{\mathsf S}^{k+1} \circ d \circ \mathsf E_{\mathsf S}^k(z\otimes \tau) = \partial^{\vee}(z\otimes \tau).$ The left hand side is 
%\begin{equation}
%\begin{split}
% & \sum_{\eta \in \mathcal T_{k+1}} \sum_{z_i} L^{\eta} [z_i](\tr_{\sigma\to\eta} d \circ \mathsf E_{\mathsf S}^k(z\otimes \tau)) z_i \otimes \eta \\
%= & \sum_{\eta \in \mathcal T_{k+1}}\sum_{\tau' \in \mathcal T_k} \mathcal O(\eta, \tau') \sum_{z_i} L^{\tau'} [z_i](\tr_{\sigma\to\tau'}\mathsf E_{\mathsf S}^k(z\otimes \tau'))z_i \otimes \eta \\
%= & \sum_{\eta \in \mathcal T_{k+1}} \sum_{\tau' \in \mathcal T_k} \mathcal O(\eta, \tau') \delta_{\tau\tau'} (z_i, z)_Z z_i \otimes \eta \\ 
%= & \sum_{\eta \in \mathcal T_{k+1}} \mathcal O(\tau,\eta) z \otimes \eta \\
%= & \partial^{\vee}(z \otimes \tau).
%\end{split}	
%\end{equation}
%Therefore, we conclude the result. 
\end{proof}

Now we can conclude the result in this subsection.
\begin{theorem}
The cohomology of $\bm A^{\bullet}$ is isomorphic to $\mathcal Z \otimes \mathcal H_{dR}(\Omega)$. Moreover, the isomorphism is induced by $\mathsf L_{S}^{\bullet}.$
\end{theorem}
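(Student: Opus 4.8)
The plan is to chain together the three structural statements already established in this subsection. First, by the direct sum decomposition of the finite element complex, $\bm A^{\bullet}=\bm S^{\bullet}(\mathcal T)\oplus\bigoplus_{\tau\in\mathcal T}\mathbb B^{\bullet}(\tau)$ \emph{as a decomposition of cochain complexes} — the closedness lemma guarantees that $d$ preserves $\bm S^{\bullet}(\mathcal T)$ and each $\mathbb B^{\bullet}(\tau)$. Since taking cohomology commutes with finite direct sums, $\mathcal H^{k}(\bm A^{\bullet})\cong \mathcal H^{k}(\bm S^{\bullet}(\mathcal T))\oplus\bigoplus_{\tau\in\mathcal T}\mathcal H^{k}(\mathbb B^{\bullet}(\tau))$. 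By the proposition on exactness of bubble functions, each sequence $\mathbb B^{\bullet}(\tau)$ is exact, so all bubble summands vanish and $\mathcal H^{k}(\bm A^{\bullet})\cong \mathcal H^{k}(\bm S^{\bullet}(\mathcal T))$, the isomorphism being realized by the projection of $\bm A^{k}$ onto $\bm S^{k}(\mathcal T)$ along the bubble spaces.

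Next I would invoke the proposition that $\mathsf E_{\mathsf S}^{\bullet}$, with inverse $\mathsf L_{\mathsf S}^{\bullet}$, is an isomorphism of cochain complexes from the simplicial cochain complex $(\mathcal Z\otimes\mathcal T_{\bullet},\partial^{\vee})$ onto $\bm S^{\bullet}(\mathcal T)$; here $\partial^{\vee}=\operatorname{id}_{\mathcal Z}\otimes\partial^{\vee}$ is the ordinary simplicial coboundary acting only on the $\mathcal T_{\bullet}$ factor, which is exactly what the generalized Stokes' formula $\Upsilon_{\sigma}(dw)=\sum_{\tau\trianglelefteq_{1}\sigma}\mathcal O(\tau,\sigma)\Upsilon_{\tau}(w)$ encodes — in particular the generalized currents are built from globally defined fields on $\Omega$, so the coefficient system carries no twisting. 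Hence $\mathcal H^{k}(\bm S^{\bullet}(\mathcal T))\cong \mathcal H^{k}(\mathcal Z\otimes\mathcal T_{\bullet},\partial^{\vee})$. Because $\mathcal Z$ is a fixed finite-dimensional real Hilbert space and $\partial^{\vee}$ acts trivially on it, $\mathcal H^{k}(\mathcal Z\otimes\mathcal T_{\bullet},\partial^{\vee})\cong \mathcal Z\otimes \mathcal H^{k}_{\mathrm{simp}}(\Omega;\mathbb R)$. Finally, the simplicial cohomology of the triangulation $\mathcal T$ of $\Omega$ agrees with the singular cohomology of $\Omega$, which by the de~Rham theorem is isomorphic to $\mathcal H_{dR}(\Omega)$. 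Composing, $\mathcal H^{k}(\bm A^{\bullet})\cong \mathcal Z\otimes\mathcal H_{dR}(\Omega)$.

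To see that the composite isomorphism is the one induced by $\mathsf L_{\mathsf S}^{\bullet}$, I would trace a discrete cocycle $w\in\bm A^{k}$ with $dw=0$ through the chain: its bubble components are cocycles in the exact complexes $\mathbb B^{\bullet}(\tau)$, hence coboundaries, so they represent the trivial class; thus $[w]$ is carried to $[\,\mathsf L_{\mathsf S}^{k}w\,]=\big[\sum_{\tau\in\mathcal T_{k}}\Upsilon_{\tau}(w)\otimes\tau\big]$ in the simplicial cochain complex, and then to its de~Rham counterpart. Note that $\Upsilon_{\eta}$ annihilates every bubble space $\mathbb B^{k}(\tau)$ for $\eta\in\mathcal T_{k}$, so the formula for $\mathsf L_{\mathsf S}^{k}$ may be applied directly on $\bm A^{k}$ and already factors through the projection onto $\bm S^{k}(\mathcal T)$; hence $\mathsf L_{\mathsf S}^{\bullet}$ descends to cohomology and induces the asserted isomorphism.

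I expect the main obstacle to be almost entirely bookkeeping rather than analysis: verifying once more that the decomposition $\bm A^{\bullet}=\bm S^{\bullet}\oplus\bigoplus_{\tau}\mathbb B^{\bullet}$ is compatible with $d$ (already done), and carefully identifying $(\mathcal Z\otimes\mathcal T_{\bullet},\partial^{\vee})$ as computing $\mathcal Z\otimes\mathcal H_{dR}(\Omega)$ — which amounts to checking the coefficient system is untwisted and then citing de~Rham. The one point worth stating explicitly, rather than leaving implicit, is that $\mathsf L_{\mathsf S}^{\bullet}$ genuinely descends to cohomology classes (not just cochains), which is exactly where the exactness of the bubble complexes is used.
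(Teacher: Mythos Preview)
Your proposal is correct and matches the paper's approach exactly: the paper states this theorem immediately after the three structural results (direct sum decomposition, exactness of bubble complexes, isomorphism of the skeletal complex with the simplicial cochain complex) and leaves the proof implicit, so your chaining of those propositions together with the de~Rham theorem is precisely the intended argument. Your explicit tracing of why $\mathsf L_{\mathsf S}^{\bullet}$ descends to cohomology (via vanishing of $\Upsilon_{\eta}$ on bubble spaces) is a detail the paper omits but which is worth including.
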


\subsection{Locally bounded interpolation operators}
In this subsection, we introduce the locally bounded interpolation operators in Theorem \ref{thm:operator}. The most part of the idea of this part originates from \cite{arnold2021local}.

By the degrees of freedom of finite elements, for each $w \in \bm A^k$, it has an decomposition as:
\begin{equation}\label{eq:fe_decompo}
\begin{split}
    w = \sum_{\tau\in\mathcal{T}_k}\sum_{i=1}^{\operatorname{dim}\mathcal{Z}}( \Upsilon_{\tau}(w), z_{i})_{\mathcal{Z}}\mathsf E_{\mathsf S}^k(z_i \otimes \tau) +  \sum_{\tau\in\mathcal{T}}\sum_{i =1}^{ \mathcal{N}_{\tau,k}}\langle \tr_{\sigma \to \tau}w , \phi_{\tau,k}^i \rangle_{\tilde{B}^k(\tau)}\mathsf E^k_{\tau}(\phi_{\tau,k}^i)\\+ \sum_{\tau\in\mathcal{T}}\sum_{i=1}^{ \mathcal{N}_{\tau,k-1}}\langle \tr_{\sigma \to \tau}w , d\phi_{\tau,k-1}^i \rangle_{\tilde{B}^k(\tau)}\mathsf E^k_{\tau}(d\phi_{\tau,k-1}^i) .
\end{split}
\end{equation}
here $\{z_i\}_i$ is a normalized orthogonal basis of $\mathcal{Z}$ and $(\cdot,\cdot)_{\mathcal{Z}}$ denotes the inner product of $\mathcal{Z}$. The aim of the construction is to consider the following form
\begin{equation}\label{eq:inter:decomp}
\pi^k(w) = 	\pi^k_{\bm S}(w) + \pi^k_{\bm B}(w),
\end{equation}
such that 
\begin{equation}\label{eq:sket-pi}
\pi^k_{\bm S}(w) = \sum_{\tau \in \mathcal T_k} \sum_{i=1}^{\dim \mathcal{Z}} (w, \mathsf \Xi_{\tau}^i)_{\st^{1}(\tau)}  \mathsf E_{\mathsf S}^k(z_i \otimes \tau)  ,
\end{equation}
and 
\begin{equation}
\label{eq:bubble-pi}
\pi^k_{\bm B}(w) = \sum_{\tau\in\mathcal{T}}\sum_{i=1}^{ \mathcal{N}_{\tau,k}} (w, d^{\ast}\mathsf \Omega_{\tau, k+1}^{i})_{\st (\tau)}  \mathsf E_{\tau}^k(\phi_{\tau,k}^i) + \sum_{\tau\in\mathcal{T}}\sum_{i=1}^{ \mathcal{N}_{\tau,k-1}} (w,  \mathsf \Omega_{\tau, k}^{i})_{\st(\tau)}  \mathsf E_{\tau}^k(d\phi_{\tau,k-1}^i).
\end{equation}
%Here $\mathsf \Xi_{\tau,k}^i$ is in $C_0^M(\st^1(\tau)) \otimes \mathbb X^k$ and $\mathsf \Omega_{\tau,k}^i$ is in $C_0^{2M}(\st(\tau)) \otimes \mathbb X^k$. Here $M$ is the maximum order of the differential operators $d^k$.
with some $\mathsf \Xi_{\tau}^i\in H_0^{M}(\st^1(\tau))\otimes\mathbb X^{\operatorname{dim}\tau}$ and $\Omega_{\tau,k}^i\in C_0^{2M}(\st(\tau))\otimes\mathbb X^k$. In what follows, we consider the properties of $\pi_{\bm S}$ and $\pi_{\bm B}$ respectively. 
One useful result is that the above construction \eqref{eq:bubble-pi} satisfies the commuting property automatically.
\begin{proposition}
It holds that $\pi_{\bm B}^k(dw) = d\pi_{\bm B}^{k-1}(w)$ for $w \in L^2(\Omega) \otimes \mathbb X^{k-1}$ with $dw \in L^2(\Omega) \otimes \mathbb X^k$.	
\end{proposition}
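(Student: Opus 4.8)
The plan is to verify the identity by expanding both sides directly from the defining formula \eqref{eq:bubble-pi}, using three ingredients that are already available: the commuting relation $d \circ \mathsf E^{k-1}_{\tau} = \mathsf E^{k}_{\tau} \circ d$ (established in the proposition on exactness of bubble functions), the nilpotency $d \circ d = 0$, and integration by parts on each patch $\st(\tau)$. The last is legitimate precisely because every cutoff $\mathsf\Omega^i_{\tau,\bullet}$ lies in $C_0^{2M}(\st(\tau))\otimes\mathbb X^{\bullet}$ and is therefore compactly supported in $\st(\tau)$, so no boundary terms appear.

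First I would expand $d\pi^{k-1}_{\bm B}(w)$. Applying $d$ termwise to \eqref{eq:bubble-pi} with $k$ replaced by $k-1$, the commuting relation turns $d\mathsf E^{k-1}_{\tau}(\phi^i_{\tau,k-1})$ into $\mathsf E^{k}_{\tau}(d\phi^i_{\tau,k-1})$ and turns $d\mathsf E^{k-1}_{\tau}(d\phi^i_{\tau,k-2})$ into $\mathsf E^{k}_{\tau}(d\,d\,\phi^i_{\tau,k-2}) = 0$. Hence the second sum drops out and
\[
d\pi^{k-1}_{\bm B}(w) = \sum_{\tau\in\mathcal T}\sum_{i=1}^{\mathcal N_{\tau,k-1}} (w, d^{\ast}\mathsf\Omega^i_{\tau,k})_{\st(\tau)}\,\mathsf E^{k}_{\tau}(d\phi^i_{\tau,k-1}).
\]

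Next I would expand $\pi^{k}_{\bm B}(dw)$, substituting $dw$ for the argument in \eqref{eq:bubble-pi}. In the first sum the coefficient is $(dw, d^{\ast}\mathsf\Omega^i_{\tau,k+1})_{\st(\tau)}$; integrating by parts on $\st(\tau)$ and using that $dw \in \mathcal{D}(d)$ with $d(dw) = 0$ in $L^2$ (valid since $w, dw \in L^2$), this equals $(d(dw), \mathsf\Omega^i_{\tau,k+1})_{\st(\tau)} = 0$, so the first sum vanishes entirely. In the second sum, integration by parts converts the coefficient $(dw, \mathsf\Omega^i_{\tau,k})_{\st(\tau)}$ into $(w, d^{\ast}\mathsf\Omega^i_{\tau,k})_{\st(\tau)}$, again with no boundary contribution. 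Thus $\pi^{k}_{\bm B}(dw)$ equals exactly the right-hand side displayed above, namely $d\pi^{k-1}_{\bm B}(w)$.

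The step I would single out as the main obstacle is the justification of these two integration-by-parts identities: one must check that pairing $dw$ against $d^{\ast}\mathsf\Omega$ and against $\mathsf\Omega$ on the patch $\st(\tau)$ produces no boundary terms. This is exactly where the compact support of the $\mathsf\Omega^i_{\tau,\bullet}$ inside $\st(\tau)$ enters, together with the elementary fact that $d(dw)=0$ as an $L^2$ function whenever $w \in L^2(\Omega)\otimes\mathbb X^{k-1}$ and $dw \in L^2(\Omega)\otimes\mathbb X^{k}$. Everything else is bookkeeping of the two index families; in particular one should note that the cutoff family $\{\mathsf\Omega^i_{\tau,k}\}_i$ that enters $\pi^{k-1}_{\bm B}$ through $d^{\ast}$ is the same family that enters $\pi^{k}_{\bm B}$ directly, which is precisely what makes the two expressions coincide.
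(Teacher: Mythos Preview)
Your proof is correct and follows essentially the same approach as the paper: both expand the two sides of the identity using the commuting relation $d\circ\mathsf E_\tau^{k-1}=\mathsf E_\tau^k\circ d$, the nilpotency of $d$, and integration by parts on each patch, arriving at the common expression $\sum_{\tau,i}(w,d^\ast\mathsf\Omega^i_{\tau,k})_{\st(\tau)}\mathsf E_\tau^k(d\phi^i_{\tau,k-1})$. The only cosmetic difference is that the paper kills the first sum in $\pi_{\bm B}^k(dw)$ via $(dw,d^\ast\mathsf\Omega)=(w,d^\ast d^\ast\mathsf\Omega)=0$, whereas you integrate by parts in the other direction and use $d(dw)=0$; both are equally valid.
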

\begin{proof}
By definition, the left-hand side is \begin{align*}
\pi^k_{\bm B}(dw) =  & \sum_{\tau\in\mathcal{T}}\sum_{i} (dw, d^{\ast}\mathsf \Omega_{\tau, k+1}^i)_{\st (\tau)}  \mathsf E_{\tau}^k(\phi_{\tau,k}^i) + \sum_{\tau\in\mathcal{T}}\sum_{i} (dw,  \mathsf \Omega_{\tau, k}^{i})_{\st(\tau)}  \mathsf E_{\tau}^k(d\phi_{\tau,k-1}^i) \\
= & \sum_{\tau\in\mathcal{T}}\sum_{i} (w, d^{\ast}d^{\ast}\mathsf \Omega_{\tau, k+1}^i)_{\st (\tau)}  \mathsf E_{\tau}^k(\phi_{\tau,k}^i) + \sum_{\tau\in\mathcal{T}}\sum_{i} (w, d^{\ast} \mathsf \Omega_{\tau, k}^{i})_{\st(\tau)}  \mathsf E_{\tau}^k(d\phi_{\tau,k-1}^i) \\ 
= & \sum_{\tau\in\mathcal{T}}\sum_{i} (w, d^{\ast} \mathsf \Omega_{\tau, k}^{i})_{\st(\tau)}  \mathsf E_{\tau}^k(d\phi_{\tau,k-1}^i).
\end{align*}
The right-hand side is 
\begin{align*}
	d\pi^{k-1}_{\bm B}(w) = &  \sum_{\tau\in\mathcal{T}}\sum_{i} (w, d^{\ast}\mathsf \Omega_{\tau, k}^i)_{\st (\tau)}  d\mathsf E_{\tau}^{k-1}(\phi_{\tau,k-1}^i) + \sum_{\tau\in\mathcal{T}}\sum_{i} (w,  \mathsf \Omega_{\tau, k-1}^{i})_{\st(\tau)}  d\mathsf E_{\tau}^{k-1}(d\phi_{\tau,k-2}^i)\\
	= & \sum_{\tau\in\mathcal{T}}\sum_{i} (w, d^{\ast}\mathsf \Omega_{\tau, k}^i)_{\st (\tau)}  \mathsf E_{\tau}^{k}(d\phi_{\tau,k-1}^i) + \sum_{\tau\in\mathcal{T}}\sum_{i} (w,  \mathsf \Omega_{\tau, k-1}^{i})_{\st(\tau)}  \mathsf E_{\tau}^{k}(d \circ d\phi_{\tau,k-2}^i)  \\
	= & \sum_{\tau\in\mathcal{T}}\sum_{i} (w, d^{\ast}\mathsf \Omega_{\tau, k}^i)_{\st (\tau)}  \mathsf E_{\tau}^{k}(d\phi_{\tau,k-1}^i),
\end{align*}
where we use the fact that $d$ commutes with $\mathsf E_{\tau}^{\bullet}$. This completes the proof.
\end{proof}

The above result shows that the commuting property is automatically valid for such a type of $\pi^k_{\bm B}$. It suffices to make sure that $\pi^k_{\bm B}$ is a projection. Therefore, we can choose $$(w, d^{\ast}\mathsf \Omega_{\tau, k+1}^{i})_{\st (\tau)} =\langle \tr_{\sigma \to \tau}w , \phi_{\tau,k}^i \rangle_{\tilde{B}^k(\tau)}$$
and
$$ (w,  \mathsf \Omega_{\tau, k}^{i})_{\st(\tau)} =  \langle \tr_{\sigma \to \tau}w , d\phi_{\tau,k-1}^i \rangle_{\tilde{B}^k(\tau)}$$
for all $w \in \bm A^k(\st(\tau))$. We show that, the latter equation implies the former one.

\begin{proposition}
If $(w, \mathsf \Omega_{\tau,k}^i)_{\st(\tau)} =\langle \tr_{\sigma \to \tau}w , d\phi_{\tau,k-1}^i \rangle_{\tilde{B}^k(\tau)}$ for all $w \in \bm A^k(\st(\tau)),1\leq k\leq n$. Then, $(w', d^{\ast}\mathsf \Omega_{\tau, k}^{i})_{\st (\tau)} =\langle \tr_{\sigma \to \tau}w' , \phi_{\tau,k-1}^i \rangle_{\tilde{B}^{k-1}(\tau)}$ for all $w' \in \bm A^{k-1}(\st(\tau)).$
\end{proposition}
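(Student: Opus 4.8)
The plan is to test the claimed identity on an arbitrary $w'\in\bm A^{k-1}(\st(\tau))$ by first pushing $w'$ through $d$ and invoking the hypothesis. Since the FECTS property $d(\bm A^{k-1})\subset\bm A^{k}$ is elementwise, hence local, one has $dw'\in\bm A^{k}(\st(\tau))$, so the hypothesis applies with $w=dw'$ and gives
\begin{equation*}
(dw',\mathsf \Omega_{\tau,k}^{i})_{\st(\tau)}=\langle \tr_{\sigma\to\tau}(dw'),d\phi_{\tau,k-1}^{i}\rangle_{\tilde B^{k}(\tau)}.
\end{equation*}
On the left, $\mathsf \Omega_{\tau,k}^{i}\in C_{0}^{2M}(\st(\tau))\otimes\mathbb X^{k}$ is smooth and compactly supported in the interior of $\st(\tau)$, and $dw'$ is the distributional derivative of $w'$, which coincides with the elementwise one and lies in $L^{2}$; integration by parts over $\st(\tau)$ therefore produces neither boundary nor interfacial jump terms, so $(dw',\mathsf \Omega_{\tau,k}^{i})_{\st(\tau)}=(w',d^{\ast}\mathsf \Omega_{\tau,k}^{i})_{\st(\tau)}$ (note $d^{\ast}\mathsf\Omega_{\tau,k}^{i}$ is again smooth with interior compact support, so the pairing is well defined). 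This is precisely the left-hand side of the assertion, so it remains to identify the right-hand side above with $\langle \tr_{\sigma\to\tau}w',\phi_{\tau,k-1}^{i}\rangle_{\tilde B^{k-1}(\tau)}$.

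For that I would use the commuting relation $\tr_{\sigma\to\tau}(dw')=d(\tr_{\sigma\to\tau}w')$ (FECTS property (2)) and then unwind both harmonic inner products through $\langle u,v\rangle=(\mathcal P u,\mathcal P v)_{\tau}+(du,dv)_{\tau}$, where $\mathcal P$ denotes the relevant $L^{2}(\tau)$-orthogonal projection onto $d\tilde B^{\bullet-1}(\tau)$; this bilinear form still makes sense with the non-bubble function $d(\tr_{\sigma\to\tau}w')\in A^{k}(\tau)$ in the first slot, because both $\mathcal P$ (projection onto a finite-dimensional subspace of $L^{2}(\tau)\otimes\mathbb X^{\bullet}$) and the differential operator are available on $A^{\bullet}(\tau)$, exactly as used in \eqref{eq:fe_decompo}. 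In the left slot, $d\circ d=0$ kills the $(d\cdot,d\cdot)$ term and $\mathcal P_{d\tilde B^{k-1}(\tau)}(d\phi_{\tau,k-1}^{i})=d\phi_{\tau,k-1}^{i}$, so, using self-adjointness of the orthogonal projection, $\langle d(\tr_{\sigma\to\tau}w'),d\phi_{\tau,k-1}^{i}\rangle_{\tilde B^{k}(\tau)}=(d\,\tr_{\sigma\to\tau}w',d\phi_{\tau,k-1}^{i})_{\tau}$. On the other side, $\langle \tr_{\sigma\to\tau}w',\phi_{\tau,k-1}^{i}\rangle_{\tilde B^{k-1}(\tau)}=(\mathcal P_{d\tilde B^{k-2}(\tau)}(\tr_{\sigma\to\tau}w'),\mathcal P_{d\tilde B^{k-2}(\tau)}\phi_{\tau,k-1}^{i})_{\tau}+(d\,\tr_{\sigma\to\tau}w',d\phi_{\tau,k-1}^{i})_{\tau}$, and the first term vanishes because, by the orthogonal-basis lemma, $\phi_{\tau,k-1}^{i}$ is $L^{2}(\tau)$-orthogonal to $d\tilde B^{k-2}(\tau)$, so $\mathcal P_{d\tilde B^{k-2}(\tau)}\phi_{\tau,k-1}^{i}=0$. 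Both expressions therefore reduce to $(d\,\tr_{\sigma\to\tau}w',d\phi_{\tau,k-1}^{i})_{\tau}$, which closes the chain of equalities and proves the proposition.

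I do not expect a genuine obstacle: the argument is the assembly of the hypothesis, the $d$–$\tr$ commutation, $d\circ d=0$, and the harmonic-basis structure. The two points that need a word of care are the clean integration by parts on $\st(\tau)$ — secured by the interior compact support of $\mathsf \Omega_{\tau,k}^{i}$ together with $dw'\in L^{2}$ — and the legitimacy of evaluating the harmonic inner product with a non-bubble first argument, handled as above. The edge case $k=1$ requires no separate treatment, since then $d\tilde B^{-1}(\tau)=\{0\}$ and the vanishing projection term is simply absent.
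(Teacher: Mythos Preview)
Your proof is correct and follows exactly the same chain of equalities as the paper: integrate by parts to get $(w',d^{\ast}\mathsf\Omega_{\tau,k}^{i})=(dw',\mathsf\Omega_{\tau,k}^{i})$, apply the hypothesis, use the $d$--$\tr$ commutation, and then identify both harmonic inner products with $(d\,\tr_{\sigma\to\tau}w',d\phi_{\tau,k-1}^{i})_{\tau}$. You are simply more explicit about the justifications (compact support for integration by parts, well-definedness of the harmonic pairing on non-bubble arguments, the $k=1$ edge case), which the paper leaves implicit.
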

\begin{proof}
	Note that
    \begin{align*}
        (w', d^{\ast}\mathsf \Omega_{\tau, k}^{i})_{\st (\tau)} &= (dw', \mathsf \Omega_{\tau, k}^{i})_{\st (\tau)}\\
        & = \langle \tr_{\sigma \to \tau}dw' , d\phi_{\tau,k-1}^i \rangle_{\tilde{B}^k(\tau)}\\
        & = (d\tr_{\sigma \to \tau}w' , d\phi_{\tau,k-1}^i)_{\tau}\\
        & = \langle \tr_{\sigma \to \tau}w' , \phi_{\tau,k-1}^i \rangle_{\tilde{B}^{k-1}(\tau)}.
    \end{align*}
	This concludes the result.
\end{proof}
 
There are many ways to determine $\mathsf \Omega_{\tau, k}^{i}$, here we inroduce one method by Riesz representation theorem. Recall that $M$ is the maximum number of the orders of the differential operators $d^{k}$. For each $\tau\in\mathcal{T}$ and $ 1\leq k\leq n$, we define by $\bm {b}_{\tau}$ the bubble function on $\st (\tau)$ such that
\begin{equation*}
    \bm {b}_{\tau}\in C^0(\st (\tau)),\quad \bm {b}_{\tau}|_{\partial \sigma} =0,\text{ for all }\sigma\subset \st (\tau),\sigma\in \mathcal{T}_n.
\end{equation*}
Note that $(\cdot,(\bm{b}_{\tau})^{2M}\cdot)_{\st(\tau)}$ provides an $L^2$ inner product of $\bm A^k(\st(\tau))$. Then from the Riesz representation theorem, for each $\phi_{\tau,k-1}^i , 1\leq i\leq \mathcal{N}_{\tau,k-1}$, there exists $\tilde\Omega_{\tau, k}^{i} \in \bm A^k(\st(\tau))$ such that
\begin{equation*}
    (w, (\bm{b}_{\tau})^{2M}\tilde  \Omega_{\tau,k}^i)_{\st(\tau)} =\langle \tr_{\sigma \to \tau}w , d\phi_{\tau,k-1}^i \rangle_{\tilde{B}^k(\tau)} \text{ for all }w \in \bm A^k(\st(\tau)).
\end{equation*}
Next we choose $\mathsf \Omega_{\tau, k}^{i} = (\bm{b}_{\tau})^{2M}\tilde  \Omega_{\tau,k}^i$ that $ \mathsf \Omega_{\tau, k}^{i}\in C_0^{2M}(\st(\tau))\otimes \mathbb X^k$, since $\tilde{\Omega}_{\tau,k}\in \bm A^k(\st(\tau))$ is piecewise smooth.

Next, we consider the interpolation operator to the skeletal complex. We first consider a fixed element $z \in \mathcal Z$. 

\begin{proposition}
For any $\tau\in\mathcal{T}$, there exists $\mathsf \Xi_{\tau} \in H_0^{M + \operatorname{dim}\tau}(\st^1(\tau))  \otimes \mathbb X^{\dim \tau}$ such that 
\begin{equation}
\label{eq:xi-1}
(w, \mathsf \Xi_{\tau})_{\st^1(\tau)} = (\Upsilon_{\tau}(w) , z)_{\mathcal{Z}},\quad \text{ for any }w \in \bm A^{\operatorname{dim}\tau}(\st^1(\tau))
\end{equation}
 and 
\begin{equation}\label{eq:xi-2} d^{\ast} \mathsf \Xi_{\tau} = \mathsf \Xi_{\partial \tau} := \sum_{\eta \trianglelefteq_1 \tau} O(\eta, \tau) \mathsf \Xi_{\eta},\quad \text{ for any }\tau\in\mathcal{T}_{\geq 1}.
\end{equation}
Note that $\mathsf \Xi_{\tau} = \mathsf \Xi_{\tau}[z]$ depends on the choice of $z$.
\end{proposition}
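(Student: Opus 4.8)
The plan is to construct the family $\{\mathsf \Xi_{\tau}[z]\}_{\tau\in\mathcal T}$ by induction on $\dim\tau$, building the functionals so that they simultaneously represent the generalized current $\Upsilon_\tau$ against $z$ and satisfy the Stokes-type compatibility $d^\ast\mathsf\Xi_\tau=\mathsf\Xi_{\partial\tau}$. Recall that the generalized Stokes formula $\Upsilon_\tau(dw)=\Upsilon_{\partial\tau}(w)$ and the localizability $\widetilde\Upsilon_\tau(\tr_{\sigma\to\tau}w)=\Upsilon_\tau(w)$ are both available, as is the exactness of the dual complex $H^M_0(\omega)\otimes\mathbb X^\bullet$ with $(d^{k-1})^\ast$ on any contractible Lipschitz $\omega$, and the generalized Bogovskii estimate of Proposition~\ref{prop:poincare_inequ}. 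Throughout, $\st^1(\tau)$ is assumed simply connected, hence contractible after shrinking $h$.

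\textbf{Base case ($\dim\tau=0$).} For a vertex $x$, condition \eqref{eq:xi-2} is vacuous, so I only need $\mathsf\Xi_x\in H^M_0(\st^1(x))\otimes\mathbb X^0$ representing $w\mapsto(\Upsilon_x(w),z)_{\mathcal Z}$ on $\bm A^0(\st^1(x))$. The functional $w\mapsto(\Upsilon_x(w),z)_{\mathcal Z}=(\widetilde\Upsilon_x(\tr_{\sigma\to x}w),z)_{\mathcal Z}$ is a linear functional on the finite-dimensional space $\bm A^0(\st^1(x))$, which I equip with the weighted inner product $(\cdot,\bm b_x^{2M}\cdot)_{\st^1(x)}$ built from the patch bubble $\bm b_x$ as in the construction of $\mathsf\Omega^i_{\tau,k}$; Riesz representation then yields $\widetilde{\mathsf\Xi}_x\in\bm A^0(\st^1(x))$ with $\mathsf\Xi_x:=\bm b_x^{2M}\widetilde{\mathsf\Xi}_x\in C^{2M}_0(\st^1(x))\otimes\mathbb X^0\subset H^M_0(\st^1(x))\otimes\mathbb X^0$ realizing \eqref{eq:xi-1}.

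\textbf{Inductive step.} Suppose $\mathsf\Xi_\eta$ has been constructed for all $\eta$ with $\dim\eta<m$, satisfying \eqref{eq:xi-1} and \eqref{eq:xi-2}. Fix $\tau\in\mathcal T_m$ and set $g:=\mathsf\Xi_{\partial\tau}=\sum_{\eta\trianglelefteq_1\tau}\mathcal O(\eta,\tau)\mathsf\Xi_\eta$, which lives in $H^M_0(\st^1(\tau))\otimes\mathbb X^{m-1}$ since each $\st^1(\eta)\subset\st^1(\tau)$. The first claim is that $g\in\ker\big((d^{m-2})^\ast\big)$ (and, when $m=1$, that $g\perp\mathcal Z$): indeed $(d^{m-2})^\ast g=\sum_\eta\mathcal O(\eta,\tau)(d^{m-2})^\ast\mathsf\Xi_\eta=\sum_\eta\mathcal O(\eta,\tau)\mathsf\Xi_{\partial\eta}=\mathsf\Xi_{\partial\partial\tau}=0$ because $\partial\partial=0$ at the level of the simplicial chain complex. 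Hence by the exactness of the dual complex together with Proposition~\ref{prop:poincare_inequ} applied on $\st^1(\tau)$, there exists $\mathsf\Xi_\tau^{(0)}\in H^{M+\operatorname{ord}(d^{m-1})}_0(\st^1(\tau))\otimes\mathbb X^m\subset H^M_0(\st^1(\tau))\otimes\mathbb X^m$ with $(d^{m-1})^\ast\mathsf\Xi_\tau^{(0)}=g$. This $\mathsf\Xi_\tau^{(0)}$ already fulfils \eqref{eq:xi-2}, but not necessarily \eqref{eq:xi-1}; I must correct it by an element of $\ker\big((d^{m-1})^\ast\big)$. Consider the residual functional $w\mapsto(\Upsilon_\tau(w),z)_{\mathcal Z}-(w,\mathsf\Xi_\tau^{(0)})_{\st^1(\tau)}$ on $\bm A^m(\st^1(\tau))$; I need to show it vanishes on $d\bm A^{m-1}(\st^1(\tau))$, so that after correction the commuting property is preserved. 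For $w=dv$ with $v\in\bm A^{m-1}(\st^1(\tau))$ one has $(\Upsilon_\tau(dv),z)_{\mathcal Z}=(\Upsilon_{\partial\tau}(v),z)_{\mathcal Z}=\sum_\eta\mathcal O(\eta,\tau)(\Upsilon_\eta(v),z)_{\mathcal Z}=\sum_\eta\mathcal O(\eta,\tau)(v,\mathsf\Xi_\eta)_{\st^1(\eta)}=(v,g)_{\st^1(\tau)}=(v,(d^{m-1})^\ast\mathsf\Xi_\tau^{(0)})_{\st^1(\tau)}=(dv,\mathsf\Xi_\tau^{(0)})_{\st^1(\tau)}$, using \eqref{eq:xi-1} at lower dimension and \eqref{eq:xi-2} for $\mathsf\Xi_\tau^{(0)}$; here I must check that the boundary terms in the integration by parts drop because each $\mathsf\Xi_\eta$, $\mathsf\Xi_\tau^{(0)}$ lies in $H^M_0$ of a patch and $d^{m-1}=d|_\sigma$ is the genuine differential operator on each $\sigma$. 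Thus the residual functional descends to the quotient $\bm A^m(\st^1(\tau))/d\bm A^{m-1}(\st^1(\tau))$, which by the cohomology theorem (Theorem~\ref{thm:cohomology}) applied on the contractible patch is finite-dimensional and detected by the currents $\Upsilon_\theta$, $\theta\in\mathcal T_m(\st^1(\tau))$; one then sets up a correction $\mathsf\Xi_\tau^{(1)}$ supported in the weighted-$L^2$ sense via bubbles $\bm b_\theta^{2M}$, landing in $C^{2M}_0(\st^1(\tau))\otimes\mathbb X^m\cap\ker\big((d^{m-1})^\ast\big)$, and takes $\mathsf\Xi_\tau:=\mathsf\Xi_\tau^{(0)}+\mathsf\Xi_\tau^{(1)}$. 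Finally, redefining $\mathsf\Xi_\tau$ by a cutoff/restriction so its support sits inside $\st^1(\tau)$ exactly (it already does by construction) closes the induction.

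\textbf{Main obstacle.} The delicate point is the compatibility bookkeeping in the inductive step: making the single correction $\mathsf\Xi_\tau^{(1)}$ enforce \emph{all} of the finitely many representation conditions \eqref{eq:xi-1} at level $m$ \emph{without} destroying $(d^{m-1})^\ast\mathsf\Xi_\tau=g$. This requires that the residual functional indeed kills $d\bm A^{m-1}(\st^1(\tau))$ — which is exactly where the generalized Stokes formula for $\Upsilon$, the lower-dimensional induction hypothesis, and the vanishing of boundary terms (the $H^M_0$-conformity of the $\mathsf\Xi$'s on patches, together with $d^k=d|_\sigma$) all have to conspire — and that the remaining degrees of freedom can be matched by objects lying in $\ker\big((d^{m-1})^\ast\big)$. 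The latter is handled by choosing correctors of the form $\bm b_\theta^{2M}\widetilde\Omega$ with $\widetilde\Omega$ a $(d^{m-1})^\ast$-closed bubble on the patch and invoking the local exactness of the dual complex once more to adjust; shape-regularity and the uniform Poincaré bound of Proposition~\ref{prop:poincare_inequ} on $\st^1(\tau)$ are what later give the scale-explicit norm bounds needed for Theorem~\ref{thm:operatorL2}, but for the present existence statement only local exactness and Riesz representation are required.
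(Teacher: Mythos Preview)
Your overall strategy---induction on $\dim\tau$, base case via bubble-weighted Riesz representation, inductive step via a Bogovskii-type particular solution plus a correction inside $\ker((d^{m-1})^\ast)$---is exactly the paper's approach, and your verification that the residual functional vanishes on $d\bm A^{m-1}(\st^1(\tau))$ is the same computation the paper carries out.

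The genuine gap is in how you build the corrector $\mathsf\Xi_\tau^{(1)}$. You propose ``correctors of the form $\bm b_\theta^{2M}\tilde\Omega$ with $\tilde\Omega$ a $(d^{m-1})^\ast$-closed bubble'', but multiplication by a bubble function does \emph{not} preserve $d^\ast$-closedness, and ``invoking the local exactness of the dual complex once more to adjust'' is circular (you are back to the same problem). The paper avoids this by taking the correction in the explicit form $d^\ast\Theta_\tau$, which lies in $\ker((d^{m-1})^\ast)$ automatically. Concretely: since the discrete complex on the contractible patch is exact, $d\bm A^{m-1}(\st^1(\tau))=\mathcal N(d)$, so the residual descends to $\bm A^m(\st^1(\tau))/\mathcal N(d)$; on this quotient $(u,v)\mapsto (du,(\bm b_\tau^1)^{2M+m}dv)_{\st^1(\tau)}$ is an inner product, Riesz produces $\tilde\Theta\in\bm A^m/\mathcal N(d)$, and one sets $\Theta_\tau=(\bm b_\tau^1)^{2M+m}d\tilde\Theta$ and $\mathsf\Xi_\tau^{(1)}=d^\ast\Theta_\tau$. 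The identity $(u,d^\ast\Theta_\tau)=(du,\Theta_\tau)$ then matches the residual exactly on the complement of $\mathcal N(d)$, while on $\mathcal N(d)$ you have already checked both sides vanish. This also makes the detour through ``detected by the currents $\Upsilon_\theta$'' unnecessary.

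A minor omission: for $m=1$ you assert $g\perp\mathcal Z$ but your displayed argument $\partial\partial=0$ only covers $m\geq 2$. The missing check is that for any $z^0\in\mathcal Z\subset\bm A^0$, $(z^0,\mathsf\Xi_{\partial\tau})=\sum_{\eta}\mathcal O(\eta,\tau)(\Upsilon_\eta(z^0),z)_{\mathcal Z}=(\Upsilon_{\partial\tau}(z^0),z)_{\mathcal Z}=(\Upsilon_\tau(dz^0),z)_{\mathcal Z}=0$.
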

\begin{proof}
%We shorten $\mathsf \Xi_{\tau}[z]$ as $\mathsf \Xi_{\tau}$ and $L^{\tau}[z]$ as $L^{\tau}$. 
We prove by an inductive construction on $\dim \tau$ that \eqref{eq:xi-1} and \eqref{eq:xi-2} hold. The construction boils down to two steps. 

\paragraph{\emph{First step}} For any $\tau\in \mathcal{T}$ with $\operatorname{dim}\tau =0$, i.e., $\tau \in\mathcal{V}$ is vertex of the mesh. Follow the similar argument as above, since $(\cdot,(\bm{b}_{\tau}^1)^{M}\cdot)_{\st^1(\tau)}$ provides an $L^2$ inner product of $\bm A^0(\st^1(\tau))$, here $\bm{b}_{\tau}^1$ is bubble function on $\st^1(\tau)$ with similar definition as $\bm b_{\tau}$. Then from the Riesz representation theorem, there exists $\tilde{\mathsf{\Xi}}_{\tau} \in \bm A^0(\st^1(\tau))$ such that
\begin{equation*}
    (w, (\bm{b}_{\tau}^1)^{M}\tilde{\mathsf{  \Xi}}_{\tau})_{\st^1(\tau)} =(\Upsilon_{\tau}(w) , z)_{\mathcal{Z}},\quad \text{ for any }w \in \bm A^0(\st^1(\tau))
\end{equation*}
Then we can choose $\mathsf \Xi_{\tau} = (\bm{b}_{\tau}^1)^{M}\tilde{\mathsf{\Xi}}_{\tau}$ that $ \mathsf \Xi_{\tau}\in C_0^{M}(\st^1(\tau))\otimes \mathbb X^0$, since $\tilde{\mathsf{\Xi}}_{\tau}$ is piecewise smooth.

\paragraph{\emph{Second step}} Assume that for any $\tau\in \mathcal{T}$ with $0\leq \operatorname{dim}\tau\leq k-1 \leq n-1$, the  \eqref{eq:xi-1} and \eqref{eq:xi-2} hold. Then for any $\tau\in \mathcal{T}$ with $\operatorname{dim}\tau = k$, consider $\mathsf \Xi_{\partial \tau} = \sum_{\eta \trianglelefteq_1 \tau} O(\eta, \tau) \mathsf \Xi_{\eta}$. Note that $\operatorname{supp}\mathsf\Xi_{\eta}\subset\st^1(\eta)\subset\st^1(\tau)$, then we get that $\mathsf \Xi_{\partial \tau} \in H_0^{M+k-1}(\st^1(\tau))\otimes\mathbb X^{k-1}$. If $k\geq 2$, by induction we have
\begin{equation*}
    d^*\mathsf \Xi_{\partial \tau} = \sum_{\eta \trianglelefteq_1 \tau} O(\eta, \tau) \mathsf \Xi_{\partial\eta}=0.
\end{equation*}
Otherwise, if $k=1$, for any $z^0\in \mathcal{Z}$, we have that
\begin{align*}
    (z^0,\mathsf \Xi_{\partial \tau})_{\st^1(\tau)} &= \sum_{\eta \trianglelefteq_1 \tau} O(\eta, \tau) (z^0,\mathsf \Xi_{\eta})_{\st^1(\tau)}\\
    & = \sum_{\eta \trianglelefteq_1 \tau} O(\eta, \tau) (\Upsilon_{\eta}(z^0) , z)_{\mathcal{Z}}\\
    & = (\Upsilon_{\partial \tau}(z^0), z)_{\mathcal{Z}} = (\Upsilon_{\tau}(dz^0), z)_{\mathcal{Z}}=0
\end{align*}
which implies that $\mathsf \Xi_{\partial \tau}\in H_0^{M}(\st^1(\tau))\otimes\mathbb X^{0}/\mathcal{Z}$. Since the element patch $\st^1(\tau)$ is contractible, then from Proposition \ref{prop:poincare_inequ}, there exists $\widetilde{\mathsf \Xi}_{\tau} \in H_0^{M + k}(\st^1(\tau))  \otimes \mathbb X^{k} $ such that $d^*\widetilde{\mathsf \Xi}_{\tau} = \mathsf \Xi_{\partial \tau}$ and
\begin{equation*}
    \|\widetilde{\mathsf \Xi}_{\tau}\|_{L^2(\st^1(\tau))}\leq C h^{l}\|\mathsf \Xi_{\partial \tau}\|_{L^2(\st^1(\tau))}
\end{equation*}
here $l$ is the order of the differential operator $d^{k-1}$.

Next, we consider the modification of the following form: $\mathsf \Xi_{\tau} = \widetilde{\mathsf \Xi}_{\tau}  + d^{*} \mathsf \Theta_{\tau}$. Clearly, in this case \eqref{eq:xi-2} always holds. Note that from the Theorem \ref{thm:cohomology}, the discrete complex on $\st^1(\tau)$
\begin{equation*}
\begin{tikzcd}
\mathcal Z \ar[r,"\subset"] & \bm A^0(\st^1(\tau)) \ar[r,"d"] & \cdots  \ar[r,"d"] & \bm A^n(\st^1(\tau)) \ar[r,""] & 0
\end{tikzcd}
\end{equation*}
is exact. Denote by $\mathcal{N}(d)$ the kernel space of $d$, then there exists $\tilde{\mathsf \Theta}_{\tau} \in \bm A^{k}(\st^1 (\tau))/\mathcal{N}(d)$ such that
$$(du, (\bm b_{\tau}^1)^{2M+k}d\tilde{\mathsf \Theta}_{\tau})_{\st^1(\tau)} = (\Upsilon_{\tau}(u),z)_{\mathcal{Z}} - (u,\widetilde{\mathsf \Xi}_{\tau} )_{\st^1(\tau)},\quad \text{ for all }u \in \bm A^{k}(\st^1 (\tau))/\mathcal{N}(d)$$
Now choose $\mathsf\Theta_{\tau} = (\bm b_{\tau}^1)^{2M+k}d\tilde{\mathsf \Theta}$, we then prove the identity \eqref{eq:xi-1} for $w \in  \bm A^{k}(\st^1(\tau))$. If $w\in \bm A^{k}(\st^1 (\tau))/\mathcal{N}(d)$, it follows from the definition of $\mathsf \Theta_{\tau}$. Otherwise, if $dw=0$, from the exactness of the discrete complex, there exists $u'\in \bm A^{k-1}(\st^1 (\tau))$ such that $w = du'$, then it holds that 
\begin{align*}
    (du', \widetilde{\mathsf \Xi}_{\tau}  + d^{\ast} \mathsf \Theta_{\tau})_{\st^1(\tau)} = (u', d^*\widetilde{\mathsf \Xi}_{\tau} )_{\st^1(\tau)} = (u', \mathsf \Xi_{\partial\tau})_{\st^1(\tau)} = (\Upsilon_{\partial\tau}(u') ,z)_{\mathcal{Z}} = (\Upsilon_{\tau}(du') ,z)_{\mathcal{Z}}
\end{align*}
which completes the proof
\end{proof}

Now we choose $\mathsf \Xi_{\tau}^i = \mathsf \Xi_{\tau}[z_i]$, then the construction \eqref{eq:sket-pi} satisfies the commuting property.
\begin{proposition}
It holds that $\pi_{\bm S}^k(dw) = d\pi_{\bm S}^{k-1}(w)$ for $w \in L^2(\Omega) \otimes \mathbb X^{k-1}$ with $dw \in L^2(\Omega) \otimes \mathbb X^k$.	
\end{proposition}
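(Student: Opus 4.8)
The plan is to evaluate $d\pi_{\bm S}^{k-1}(w)$ directly and to recognize the outcome as $\pi_{\bm S}^k(dw)$. The only two facts I will need are the intertwining of the skeletal lifting with the simplicial coboundary, $d\circ\mathsf E_{\mathsf S}^{k-1}=\mathsf E_{\mathsf S}^{k}\circ\partial^{\vee}$, and the adjoint--coboundary relation \eqref{eq:xi-2}. (The reproducing identity \eqref{eq:xi-1} plays no role here; it is what later guarantees that $\pi_{\bm S}^k$ is idempotent.) The intertwining itself follows from the previously established identity $\mathsf L_{\mathsf S}^{k}\circ d\circ\mathsf E_{\mathsf S}^{k-1}=\partial^{\vee}$ by composing with $\mathsf E_{\mathsf S}^{k}$ on the left and using both that $\mathsf E_{\mathsf S}^{k}\circ\mathsf L_{\mathsf S}^{k}=\operatorname{Id}$ on $\bm S^{k}(\mathcal T)$ and the closedness $d\,\bm S^{k-1}(\mathcal T)\subset\bm S^{k}(\mathcal T)$.

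First I would apply $d$ to \eqref{eq:sket-pi} term by term and substitute $d\mathsf E_{\mathsf S}^{k-1}(z_i\otimes\tau)=\sum_{\theta\in\mathcal T_k,\ \tau\trianglelefteq_1\theta}\mathcal O(\tau,\theta)\,\mathsf E_{\mathsf S}^{k}(z_i\otimes\theta)$, then interchange the order of summation over the incidence pairs $\tau\trianglelefteq_1\theta$ to obtain
\[ d\pi_{\bm S}^{k-1}(w)=\sum_{\theta\in\mathcal T_k}\sum_{i}\Bigl(\sum_{\tau\trianglelefteq_1\theta}\mathcal O(\tau,\theta)\,(w,\mathsf \Xi_{\tau}^i)_{\st^1(\tau)}\Bigr)\mathsf E_{\mathsf S}^{k}(z_i\otimes\theta). \]
Next I would collapse the inner sum: since $\supp\mathsf \Xi_{\tau}^i\subset\st^1(\tau)\subset\st^1(\theta)$ whenever $\tau\trianglelefteq\theta$, each pairing may be taken over $\st^1(\theta)$, so the parenthesised quantity equals $\bigl(w,\sum_{\tau\trianglelefteq_1\theta}\mathcal O(\tau,\theta)\mathsf \Xi_{\tau}^i\bigr)_{\st^1(\theta)}=(w,\mathsf \Xi_{\partial\theta}^i)_{\st^1(\theta)}$, which by \eqref{eq:xi-2} (valid since $\dim\theta=k\ge1$) equals $(w,d^{\ast}\mathsf \Xi_{\theta}^i)_{\st^1(\theta)}$.

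The last step, and the only analytic one, is an integration by parts: because $\mathsf \Xi_{\theta}^i\in H_0^{M}(\st^1(\theta))\otimes\mathbb X^k$ has compact support inside the patch and, by hypothesis, $w$ has weak image $dw\in L^2(\Omega)\otimes\mathbb X^k$, the definition of the formal adjoint $d^{\ast}$ yields $(w,d^{\ast}\mathsf \Xi_{\theta}^i)_{\st^1(\theta)}=(dw,\mathsf \Xi_{\theta}^i)_{\st^1(\theta)}$ without boundary contribution. Substituting this back gives
\[ d\pi_{\bm S}^{k-1}(w)=\sum_{\theta\in\mathcal T_k}\sum_{i}(dw,\mathsf \Xi_{\theta}^i)_{\st^1(\theta)}\,\mathsf E_{\mathsf S}^{k}(z_i\otimes\theta), \]
which is precisely \eqref{eq:sket-pi} read off at $dw$, i.e. $\pi_{\bm S}^k(dw)$. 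I do not expect a genuine obstacle: the entire weight of the argument was discharged in the previous proposition, where the forms $\mathsf \Xi_{\tau}^i$ were constructed to satisfy exactly the adjoint--coboundary compatibility \eqref{eq:xi-2}; what remains here is bookkeeping plus one legitimate integration by parts, the single place where the standing hypothesis $dw\in L^2(\Omega)\otimes\mathbb X^k$ is actually used.
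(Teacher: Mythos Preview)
Your proof is correct and follows essentially the same route as the paper's: both rely on the intertwining $d\circ\mathsf E_{\mathsf S}^{k-1}=\mathsf E_{\mathsf S}^{k}\circ\partial^{\vee}$, the adjoint--coboundary relation \eqref{eq:xi-2}, and one integration by parts, the only difference being that you start from $d\pi_{\bm S}^{k-1}(w)$ and work toward $\pi_{\bm S}^{k}(dw)$ whereas the paper proceeds in the opposite direction. Your additional remarks on supports and on why the boundary term vanishes are welcome clarifications.
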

\begin{proof}
    By definition, it holds that
    \begin{align*}
        \pi_{\bm S}^k(dw) & = \sum_{\tau \in \mathcal T_k} \sum_{i=1}^{\dim \mathcal{Z}} (dw, \mathsf \Xi_{\tau}^i)_{\st^{1}(\tau)}  \mathsf E_{\mathsf S}^k(z_i \otimes \tau)\\
        & = \sum_{\tau \in \mathcal T_k} \sum_{i=1}^{\dim \mathcal{Z}} (w, d^*\mathsf \Xi_{\tau}^i)_{\st^{1}(\tau)}  \mathsf E_{\mathsf S}^k(z_i \otimes \tau)\\
        & = \sum_{i=1}^{\dim \mathcal{Z}}\sum_{\tau \in \mathcal T_k}\sum_{\eta \trianglelefteq_1 \tau} O(\eta, \tau)(w, \mathsf \Xi_{\eta}^i)_{\st^{1}(\eta)}\mathsf E_{\mathsf S}^k(z_i \otimes \tau)\\
        & = \sum_{i=1}^{\dim \mathcal{Z}}\sum_{\eta \in \mathcal T_{k-1}}(w, \mathsf \Xi_{\eta}^i)_{\st^{1}(\eta)}\sum_{\eta \trianglelefteq_1 \tau}  O(\eta, \tau)\mathsf E_{\mathsf S}^k(z_i \otimes \tau)\\
        & = \sum_{i=1}^{\dim \mathcal{Z}}\sum_{\eta \in \mathcal T_{k-1}}(w, \mathsf \Xi_{\eta}^i)_{\st^{1}(\eta)}d \mathsf E_{\mathsf S}^{k-1}(z_i \otimes \eta) = d\pi_{\bm S}^{k-1}(w)
    \end{align*}
    here the last equation comes from the property of $\Upsilon_{\bullet}$ and the definition of $\mathsf E_{\mathsf S}^{k-1}(z_i \otimes \eta)$, which completes the proof.
\end{proof}

Now we can conclude the result in this subsection.
\begin{theorem}
Suppose that the conditions in Theorem \ref{thm:operator} hold true, then the operators $\pi^k$ constructed in \eqref{eq:inter:decomp} are commuting projections. Furthermore, suppose that the conditions in Theorem \ref{thm:operatorL2} hold true, then these projections $\pi_k$ are locally $L^2$ bounded, namely, there is a constant $C$, depends only on the regularity constant, and 
$$\| \pi^k u\|_{L^2(\sigma)} \le C \| u\|_{L^2(\st^2(\sigma))},\quad \text{ for all }u\in L^2(\Omega)\otimes \mathbb X^k,\sigma\in\mathcal{T}_n$$
A direct consequence is that 
$$\| \pi^k u\|_{L^2(\Omega)} \le C' \| u\|_{L^2(\Omega)},\quad \text{ for all }u\in L^2(\Omega)\otimes\mathbb X^k$$
\end{theorem}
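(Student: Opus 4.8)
\emph{Commuting property and reproduction.} The plan is to read off the first assertion from the two propositions already proved for $\pi_{\bm S}$ and $\pi_{\bm B}$, and then verify that $\pi^k$ reproduces $\bm A^k$. Indeed, by \eqref{eq:inter:decomp} we have $\pi^k=\pi^k_{\bm S}+\pi^k_{\bm B}$, and the preceding two propositions give $d\pi^k_{\bm S}=\pi^{k+1}_{\bm S}d$ and $d\pi^k_{\bm B}=\pi^{k+1}_{\bm B}d$ on $\{w\in L^2(\Omega)\otimes\mathbb X^k: dw\in L^2(\Omega)\otimes\mathbb X^{k+1}\}$; adding them yields $d\pi^k=\pi^{k+1}d$. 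For the projection property I would fix $w\in\bm A^k$ and use that $\supp\mathsf\Xi^i_\tau\subset\st^1(\tau)$ and $\supp\mathsf\Omega^i_{\tau,k}\subset\st(\tau)$ (immediate from their constructions, since each is a product of a power of a patch bubble function with a piecewise smooth function that already lives on the patch, and the Bogovskii potential of Proposition~\ref{prop:poincare_inequ} preserves the patch). Hence the coefficients appearing in \eqref{eq:sket-pi}--\eqref{eq:bubble-pi} only involve $w$ restricted to these patches, on which $w$ belongs to $\bm A^k(\st^1(\tau))$ resp.\ $\bm A^k(\st(\tau))$; so the defining relation \eqref{eq:xi-1} gives $(w,\mathsf\Xi^i_\tau)_{\st^1(\tau)}=(\Upsilon_\tau(w),z_i)_{\mathcal Z}$, and the defining relations for $\mathsf\Omega$ stated after \eqref{eq:bubble-pi} give $(w,d^{\ast}\mathsf\Omega^i_{\tau,k+1})_{\st(\tau)}=\langle\tr_{\sigma\to\tau}w,\phi^i_{\tau,k}\rangle_{\tilde B^k(\tau)}$ and $(w,\mathsf\Omega^i_{\tau,k})_{\st(\tau)}=\langle\tr_{\sigma\to\tau}w,d\phi^i_{\tau,k-1}\rangle_{\tilde B^k(\tau)}$. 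Substituting these into \eqref{eq:sket-pi}--\eqref{eq:bubble-pi} recovers term by term the decomposition \eqref{eq:fe_decompo} of $w$, whence $\pi^k w=w$; together with the commuting property, $\pi^\bullet$ is a commuting projection.

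\emph{Locality and the start of the quantitative estimate.} Since $\mathsf E^k_{\mathsf S}(z_i\otimes\tau)$ and $\mathsf E^k_\tau(\cdot)$ are supported in $\st(\tau)$, the restriction $\pi^k(u)|_\sigma$ only picks up the terms with $\tau\trianglelefteq\sigma$, whose coefficients depend on $u$ restricted to $\st^1(\tau)\subset\st^1(\sigma)\subset\st^2(\sigma)$; thus each $\pi^k$ is of local inner-product type in the sense of the definition, and it suffices to bound $\|\pi^k u\|_{L^2(\sigma)}$ for a fixed $\sigma\in\mathcal T_n$. By shape regularity the number of $\tau\trianglelefteq\sigma$ is uniformly bounded, so I would estimate, for each such $\tau$ and each index $i$, the products $|(u,\mathsf\Xi^i_\tau)_{\st^1(\tau)}|\,\|\mathsf E^k_{\mathsf S}(z_i\otimes\tau)\|_{L^2(\sigma)}$ and $|(u,\mathsf\Omega^i_{\tau,\bullet})_{\st(\tau)}|\,\|\mathsf E^k_\tau(\cdot)\|_{L^2(\sigma)}$ by pulling the patch back to a reference configuration through the affine scaling $\varphi$ of the homogeneity definition. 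The extension operators $\mathsf E^k_{\mathsf S}$, $\mathsf E^k_\tau$ and the Riesz/bubble representatives $\mathsf\Omega^i_{\tau,\bullet}$ are defined purely from $\tilde B^\bullet(\tau)$, the harmonic inner products on $\tau$, and $\widetilde\Upsilon_\tau$; on the fixed reference patch all of these are bounded below and above, and under $\varphi$ they transform with fixed powers of $h_\tau$ governed by $\operatorname{ord}(d_\bullet)$ and, through homogeneity, by the exponents $\ell_k$ with $\ell_{k+1}=\ell_k+\operatorname{ord}(d_k)$. Because these are exactly the powers that appear in $d^{\ast}$, all $h$-powers cancel in the products above, leaving a constant depending only on the shape-regularity constant (and $\dim\mathcal Z$).

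\emph{The main obstacle and conclusion.} The one genuinely delicate ingredient is the scaling of the skeletal coefficient functionals $\mathsf\Xi_\tau$, which are built by the inductive construction $\mathsf\Xi_\tau=\widetilde{\mathsf\Xi}_\tau+d^{\ast}\mathsf\Theta_\tau$: here $\widetilde{\mathsf\Xi}_\tau$ comes from Proposition~\ref{prop:poincare_inequ} applied to $\mathsf\Xi_{\partial\tau}$ on $\st^1(\tau)$, and $\mathsf\Theta_\tau$ from a Riesz representation on $\st^1(\tau)$ that uses exactness of the discrete complex on that contractible patch (\Cref{thm:cohomology}). One must run the induction on $\dim\tau$ keeping track of the $h_\tau$-powers, using the hypothesis that the Poincar\'e constants $C(\st^1(\tau))$ are uniformly bounded and shape regularity to conclude that both corrections scale like their reference counterparts and hence that $\|\mathsf\Xi^i_\tau\|_{L^2(\st^1(\tau))}$ scales with precisely the exponent making the product with $\|\mathsf E^k_{\mathsf S}(z_i\otimes\tau)\|_{L^2(\sigma)}$ scale-invariant; this bookkeeping is the step I expect to demand the most care. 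Granting it, summing the finitely many terms and using $\|\cdot\|_{L^2(\st^1(\tau))}\le\|\cdot\|_{L^2(\st^2(\sigma))}$ gives the local bound $\|\pi^k u\|_{L^2(\sigma)}\le C\|u\|_{L^2(\st^2(\sigma))}$; summing over $\sigma\in\mathcal T_n$ and invoking the finite overlap of the patches $\{\st^2(\sigma)\}$ (again shape regularity) upgrades this to $\|\pi^k u\|_{L^2(\Omega)}\le C'\|u\|_{L^2(\Omega)}$.
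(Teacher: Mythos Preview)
Your proposal is correct and follows essentially the same approach as the paper: the commuting and projection properties are read off from the preceding propositions and the defining relations of $\mathsf\Xi_\tau$ and $\mathsf\Omega_{\tau,k}$, while the local $L^2$ bound is obtained via the scaling (pull-back to a reference patch) and compactness argument of \cite[Theorem~3.1]{arnold2021local} together with Lemma~\ref{lem:L2estimate:basis}. The paper's proof is in fact far terser than yours---it simply cites that reference and the lemma---so your write-up already supplies the bookkeeping the paper omits; your identification of the inductive scaling of $\mathsf\Xi_\tau$ as the delicate step is exactly the point where the uniform Poincar\'e bound and shape regularity enter.
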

\begin{proof}
    The proof is similar as \cite[Theo.~3.1]{arnold2021local}, which comes from scaling argument and compactness argument and Lemma \ref{lem:L2estimate:basis}.
\end{proof}

\section{Conclusions and Perspectives}

In this paper, we establish the framework to elaborate on the construction of finite element complexes. Specifically, the trace structure allows us to consider the finite elements with additional smoothness. Based on the concept of generalized currents, we can decompose the finite element complexes into the skeletal part (contributed by the generalized currents) and the bubble part (coming from the remaining degrees of freedom). Based on this decomposition, we can readily derive the cohomology and the \(L^2\)-bounded interpolation of the finite element complexes. This will serve as a building block for the whole analysis of solving Hodge-Laplacian problems and their associated eigenvalue problems.

As previously mentioned, the examples presented above lie in the center of the finite element de Rham complexes and tensor complexes, where the latter essentially can be derived from the Bernstein-Gelfand-Gelfand construction \cite{arnold2021complexes}. To detail all the finite element complexes in this paper seems impractical. Especially, the construction of the divdiv complexes and the elasticity complexes involves numerous technicalities. Therefore, some constructions of the finite element complexes are not presented here, though the construction can be modified into FECTS to derive its cohomology and interpolation operators, e.g. \cite{hu2023finiteelementssymmetrictraceles,guo2025scapT} 

Nonetheless, we believe that the current examples are far from exhausting the capacity of using the FECTS framework. We hereby list some potential directions for further studies, which are left as future works.

\begin{enumerate}
\item Boundary treatment: In this paper, we only consider the free boundary case. We believe that both the free boundary case and the zero boundary case can be readily handled by the current framework. However, for mixed boundary conditions and essential (but non-zero) conditions, the framework requires some additional techniques. For the de Rham cases, this can be categorized by the “relative de Rham complexes”, cf. \cite{bott2013differential}. As far as the authors are aware, few results regarding the cohomology of the complicated boundary case are discussed in the discrete case. On the other hand, preserving the boundary value in such boundary cases seems nontrivial, see \cite{licht2019smoothed} and the references therein.
\item $p$ version finite element complexes. One advantage is that the framework of FECTS does not require each cell to have the same space. Therefore, it is also possible to consider the case when the FECTS contains heterogeneous data, for example, the $p$ finite elements. Some of the results for stress complexes in two dimensions be found in \cite{aznaran2024uniformly}. Meanwhile, a systematic framework for interpolation and approximation is expected.
\end{enumerate}

% \section*{Acknowlegement}
% The authors would like to thank .

% We define the local data space 
% \begin{equation}
%     \mathcal A_{a} := \{ \sigma \in \bigoplus_{f \in N(a)} \mathbb R^{2\times 2} : \sigma\bm t \mathrm{ is single-valued on each internal edge of }N(a)\},
%     \end{equation}

% The above div stability is a part of the exactness of the discrete Stokes' complex. If we additionally define the argryis finite element space 
% $$\Phi_{k+1}^{(1,2)} = \{ \phi \in C^1(\Omega; \mathbb R) ; \phi|_f \in P_{k+1}, \phi \mathrm{ is }C^2\mathrm{  at each vertex.}\}.$$

% Based on the div stability and a dimension counting, Falk and Neilan showed that 
% \begin{equation}
% \label{eq:exactStokes2d}
% \mathbb R \to \Phi_{k+1}^{(1,2)} \xrightarrow{\curl} \bm V_k^{(1)} \xrightarrow{\div} Q_{k-1}^{(0)} \to 0
% \end{equation}
% is exact on any triangulation, provided that $ k\ge 4$.
\newpage 
\appendix

\section{$L^2$ boundedness of the interpolation operator}
We first prove the following lemma
\begin{lemma}\label{lem:L2estimate:basis}
    For any $\phi_{\tau,k}^i$, assume the differential operator $d$ is $l$-order, then it holds that
    \begin{equation*}
        \tfrac{1}{h_{\tau}^l}\|\mathsf E_{\tau}^k(\phi_{\tau,k}^i)\|_{L^2(\st^1(\tau))} +  \|\mathsf E_{\tau}^{k+1}(d\phi_{\tau,k}^i)\|_{L^2(\st^1(\tau))}\leq C h_{\tau}^{(n-\operatorname{dim}\tau )/2}
    \end{equation*}
\end{lemma}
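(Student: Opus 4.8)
The plan is to prove the estimate by a scaling argument, reducing to a bounded-geometry reference configuration and then re-inflating. First I would fix $\tau\in\mathcal{T}$, let $x_0$ be a vertex of $\tau$, and introduce the affine scaling $\hat{\bm x}=\varphi(\bm x)=(\bm x-x_0)/h_\tau$, which maps $\st^1(\tau)$ to a patch $\widehat{\st^1(\tau)}$ of size $O(1)$; by shape regularity this reference patch has uniformly bounded geometry, so there are only finitely many combinatorial types and all constants attached to it are uniform. The key observation is that the extension operator $\mathsf E^k_\tau$ is defined purely by the degrees of freedom (harmonic-inner-product pairings against the normalized bubble basis $\{d\phi^i_{\eta,k-1}\}\cup\{\phi^i_{\eta,k}\}$ and the conditions $\Upsilon_\eta(\cdot)=0$), and these DOFs transform in a controlled way under $\varphi$. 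Concretely, I would track how the normalized basis functions $\phi^i_{\tau,k}$ scale (they live on $\tau$, a simplex of dimension $\dim\tau$, so the $L^2$-normalization on $\tau$ produces an $h_\tau^{-\dim\tau/2}$-type factor when passed to the reference simplex, together with factors $h_\tau^{\pm l}$ coming from the differential $d$ in the harmonic inner product), and how $\Upsilon_\eta$ scales (here I invoke the homogeneity assumption: $\Upsilon_\eta(\phi)=h_\tau^{\ell_{\dim\eta}}\Upsilon_{\hat\eta}(\hat\phi)$).

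Next I would establish the scaling identity for the extension operator itself. On the reference patch, define $\widehat{\mathsf E}^k_{\hat\tau}$ by the same DOF recipe. Because the DOF system is unisolvent and both $\mathsf E^k_\tau$ and the pulled-back $\varphi^*\widehat{\mathsf E}^k_{\hat\tau}\varphi_*$ satisfy the same (scaled) interpolation conditions, they must agree up to the explicit scaling factors collected in the previous step; this gives
\begin{equation*}
\mathsf E^k_\tau(\phi^i_{\tau,k}) = c\, h_\tau^{\alpha}\, \varphi^*\bigl(\widehat{\mathsf E}^k_{\hat\tau}(\hat\phi^i_{\hat\tau,k})\bigr)
\end{equation*}
for an explicit exponent $\alpha$, where the hatted objects on the reference patch have $L^2$-norms bounded by a uniform constant (finitely many reference configurations). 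Then $\|\mathsf E^k_\tau(\phi^i_{\tau,k})\|_{L^2(\st^1(\tau))} = c\,h_\tau^{\alpha}\,h_\tau^{n/2}\|\widehat{\mathsf E}^k_{\hat\tau}(\hat\phi^i_{\hat\tau,k})\|_{L^2(\widehat{\st^1(\tau)})}$, the $h_\tau^{n/2}$ coming from the Jacobian of $\varphi$ on the $n$-dimensional patch. Bookkeeping the exponents from the normalization of $\phi^i_{\tau,k}$ on a $\dim\tau$-dimensional simplex versus the Jacobian on an $n$-dimensional patch, plus the $l$-order differential in the harmonic inner product, should collapse to $h_\tau^{(n-\dim\tau)/2}$ for the $\mathsf E^k_\tau(\phi^i_{\tau,k})$ term and $h_\tau^{l}\cdot h_\tau^{(n-\dim\tau)/2}$ for $\mathsf E^{k+1}_\tau(d\phi^i_{\tau,k})$ — consistent with the claimed inequality once divided by $h_\tau^l$. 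Here I would use that $d$ commutes with $\mathsf E_\tau^\bullet$ (established earlier) so that $\mathsf E^{k+1}_\tau(d\phi^i_{\tau,k}) = d\,\mathsf E^k_\tau(\phi^i_{\tau,k})$, and the differential picks up exactly one factor $h_\tau^{-l}$ under scaling.

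The main obstacle I anticipate is the careful exponent bookkeeping: reconciling three different "dimensions" entering the scaling — the simplex $\tau$ on which the bubble basis is $L^2$-normalized (dimension $\dim\tau$), the patch $\st^1(\tau)$ on which the $L^2$-norm in the conclusion is taken (dimension $n$), and the orders $l$ and $M$ of the differential operators appearing in the harmonic inner product and in the homogeneity relation $\ell_{k+1}=\ell_k+\operatorname{ord}(d_k)$. One must also confirm that the reference-patch extension norms are genuinely uniformly bounded, which requires that the reference DOF system stays uniformly unisolvent across the finitely many shape-regular configurations — this is where shape regularity and the uniform bound on the Poincaré constant $C(\st^1(\tau))$ are used. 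Once the scaling dictionary is set up correctly, the inequality follows by combining it with the uniform reference bound; I would present the exponent computation as a short lemma and relegate the verification that no cross-terms survive to a direct check.
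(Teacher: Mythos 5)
Your overall strategy is exactly the paper's: rescale by $\varphi(\bm x)=(\bm x-x_0)/h_\tau$ to a reference patch of unit size, use the identity $\mathsf E^k_{\hat\tau}(\hat\phi^i_{\tau,k})=\varphi^{-1*}\mathsf E^k_\tau(\phi^i_{\tau,k})$ together with shape regularity to get a uniform bound for the reference extension, and then track the powers of $h_\tau$ coming from the harmonic normalization of $\phi^i_{\tau,k}$ and from the Jacobian. The commutation $\mathsf E^{k+1}_\tau(d\phi^i_{\tau,k})=d\,\mathsf E^k_\tau(\phi^i_{\tau,k})$ is also how the paper handles the second term.

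However, the one step you defer as ``bookkeeping'' is stated with the two exponents swapped, and as written it contradicts the lemma. The normalization is $(d\phi^i_{\tau,k},d\phi^i_{\tau,k})_{\tau}=1$, so on the reference simplex $(d\hat\phi^i_{\tau,k},d\hat\phi^i_{\tau,k})_{\hat\tau}=h_\tau^{2l-\dim\tau}$; the uniform reference bound then gives $\|\mathsf E^k_{\hat\tau}(\hat\phi^i_{\tau,k})\|_{L^2}\le C\,h_\tau^{\,l-\dim\tau/2}$, and the Jacobian factor $h_\tau^{n/2}$ yields
\begin{equation*}
\|\mathsf E^k_\tau(\phi^i_{\tau,k})\|_{L^2(\st^1(\tau))}\le C\,h_\tau^{\,l+(n-\dim\tau)/2},
\end{equation*}
not $h_\tau^{(n-\dim\tau)/2}$ as you assert. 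Since $d$ contributes a factor $h_\tau^{l}$ under the pullback, it is $\|\mathsf E^{k+1}_\tau(d\phi^i_{\tau,k})\|_{L^2(\st^1(\tau))}$ that is bounded by $C\,h_\tau^{(n-\dim\tau)/2}$: the function $\phi^i_{\tau,k}$ is \emph{smaller} by a factor $h_\tau^{l}$ than its differential, which is precisely why the lemma divides the first term by $h_\tau^{l}$. With your exponents, $h_\tau^{-l}\|\mathsf E^k_\tau(\phi^i_{\tau,k})\|$ would be of order $h_\tau^{(n-\dim\tau)/2-l}$ and the claimed estimate would fail as $h_\tau\to 0$, so your ``consistency'' check does not go through as stated. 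A minor further point: the uniform Poincar\'e constant of the Bogovskii-type result is not needed here; what is needed is only that the operator-norm constant $C(\st^1(\hat\tau))$ of the reference extension is controlled by shape regularity, exactly as you argue via the finitely many reference configurations.
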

\begin{proof}
    Follow the scaling argument in \cite{arnold2006finite}, let $x_0$ denote the first vetex of $\tau$, and define $\varphi(x) = (x-x_0)/h_{\tau}$ which maps $\tau$ to $\hat{\tau} = \varphi(\tau)$ and $\st(\tau)$ to $\st(\hat{\tau}) = \varphi(\st(\tau))$. Define $\hat{\phi}_{\tau,k}^i = \varphi^{-1*}\phi_{\tau,k}^i$ and it is easy to check that $\mathsf E_{\hat{\tau}}^k(\hat{\phi}_{\tau,k}^i) = \varphi^{-1*}\mathsf E_{\tau}^k(\phi_{\tau,k}^i)$, then
    \begin{equation*}
        \|\mathsf E_{\tau}^k(\phi_{\tau,k}^i)\|_{L^2(\st^1(\tau))} = h^{n/2}_{\tau}\|\mathsf E_{\hat{\tau}}^k(\hat{\phi}_{\tau,k}^i)\|_{L^2(\st^1(\hat{\tau}))}.
    \end{equation*}
    Note that 
    \begin{align*}
        \langle \hat{\phi}_{\tau,k}^i,   \hat{\phi}_{\tau,k}^i \rangle_{\tilde{B}^k(\hat{\tau})} &= (d\hat{\phi}_{\tau,k}^i, d\hat{\phi}_{\tau,k}^i)_{L^2(\hat{\tau})} \\
        & = h_{\tau}^{2l-\operatorname{dim}\tau} (d{\phi_{\tau,k}^i}, d{\phi_{\tau,k}^i})_{L^2({\tau})} = h_{\tau}^{2l-\operatorname{dim}\tau}.
    \end{align*} 
    Hence
    \begin{equation*}
        \|\mathsf E_{\hat{\tau}}^k(\hat{\phi}_{\tau,k}^i)\|_{L^2(\st^1(\hat{\tau}))} \leq C(\st^1(\hat{\tau})) h_{\tau}^{l-\operatorname{dim}\tau/2} 
    \end{equation*}
    here $C(\st^1(\hat{\tau}))$ is a continuous function of $\st^1(\hat{\tau})$, then the mesh regularity assumption implies that there exists constant $C$ independent of $\tau$ and $h_{\tau}$ such that $C(\st^1(\hat{\tau}))\leq C$. Then we get that
    \begin{equation*}
        \|\mathsf E_{\tau}^k(\phi_{\tau,k}^i)\|_{L^2(\st^1(\tau))}\leq Ch_{\tau}^{l+(n-\operatorname{dim}\tau)/2}.
    \end{equation*}
    Similarly
    \begin{equation*}
        \|\mathsf E_{\tau}^{k+1}(d\phi_{\tau,k}^i)\|_{L^2(\st^1(\tau))}\leq C h_{\tau}^{(n-\operatorname{dim}\tau )/2}.
    \end{equation*}
\end{proof}

% \section{Exactness of the edge bubble complexes}
%
%In this section, we provide the proofs of the exactness of the edge bubble complexes that are coupled. Namely, [eqref] and [eqref]. 
%
%The bubble space on edge is 
%\begin{equation}
%\begin{tikzcd}[column sep = large]
%0\ar[r]&
% {\begin{bmatrix} \mathring{P}_k^{(1)}(e) \\  \mathring{P}_{k}^{(1)}(e) \\ \mathring{P}_{k-1}^{(0)}(e) \end{bmatrix}
% }
%  \ar[r,"
%    {\begin{bmatrix} \partial_{t} ~  0 ~ -\frac{1}{2} \\ 0 ~  \partial_{\bm t} ~  0 \\ 0 ~  0 ~ \frac{1}{2} \partial_{\bm t}\end{bmatrix}  }
%    "]
%   &
% {\begin{bmatrix} \mathring{P}_{k-1}^{(0)}(e)\\ \mathring{P}_{k-1}^{(0)}(e) \\ P_{k-2}(e)\end{bmatrix} / \left\{ \begin{bmatrix}
% \partial_t b \\ \mathbb R \\ -b
%\end{bmatrix}:b \in P_1(e) \right\}
%}
% \ar[r]&0.
%\end{tikzcd}
%\end{equation}
%The quotient space here is defined as
%\begin{align*}
%    {\begin{bmatrix} \mathring{P}_{k-1}^{(0)}(e)\\ \mathring{P}_{k-1}^{(0)}(e) \\ P_{k-2}(e)\end{bmatrix} / \{ \begin{bmatrix}
% \partial_t b \\ \mathbb R \\ -b
%\end{bmatrix}}:b \in P_1(e) \} = \{(u,& v,w)^T\in (\mathring{P}_{k-1}^{(0)}(e), \mathring{P}_{k-1}^{(0)}(e), P_{k-2}(e))^T\\ &:\int_e (u\partial_t b + va-wb)\mathrm{d}x=0,\text{ for all }b\in P_1(e),a\in P_0(e)\}
% \end{align*}
 
%

\bibliographystyle{plain}
\bibliography{ref}
\end{document}